\tikzset{thick,level distance=3mm,sibling distance=6mm}
\newcounter{cprop}[section]
\newtheorem{theorem}[cprop]{Theorem}
\newtheorem*{theorem*}{Theorem}
\theoremstyle{plain}
\newtheorem{corollary}[cprop]{Corollary}
\newtheorem*{corollary*}{Corollary}
\newtheorem{lemma}[cprop]{Lemma}
\newtheorem{proposition}[cprop]{Proposition}
\numberwithin{equation}{section}
\theoremstyle{definition}
\newtheorem{definition}[cprop]{Definition}
\newtheorem{example}[cprop]{Example}
\theoremstyle{remark}
\newtheorem{remark}[cprop]{Remark}
\newcommand{\E}{\mathbb{E}}
\renewcommand{\P}{\mathbb{P}}
\newcommand{\R}{\mathbb{R}}
\newcommand{\N}{\mathbb{N}}
\newcommand{\Z}{\mathbb{Z}}
\newcommand{\X}{\mathbb{X}}
\renewcommand{\L}{\mathbb{L}}
\newcommand{\Y}{\mathbb{Y}}
\newcommand{\blue}[1]{\textcolor{blue}{#1}}
\newcommand{\tlog}{\log^{\otimes}}
\newcommand{\texp}{\exp^{\otimes}}
\newcommand{\sexp}{\exp^{\shuffle}}
\newcommand{\vertiii}[1]{{\left\vert\kern-0.25ex\left\vert\kern-0.25ex\left\vert #1
    \right\vert\kern-0.25ex\right\vert\kern-0.25ex\right\vert}}
\renewcommand{\hat}{\widehat}
\renewcommand{\tilde}{\widetilde}
\begin{document}
\title[Optimal stopping signatures]{Optimal stopping with signatures}

\author{C. Bayer}
\address{Christian Bayer \\ Weierstrass Institute \\ Berlin, Germany}
\email{christian.bayer@wias-berlin.de}

\author{P. Hager}
\address{Paul Hager \\
Institut f\"ur Mathematik, Technische Universit\"at Berlin, Germany}
\email{phager@math.tu-berlin.de}

\author{S. Riedel}
\address{Sebastian Riedel \\
Institut f\"ur Analysis, Leibniz Universit\"at Hannover, Germany}
\email{riedel@math.uni-hannover.de}

\author{J. Schoenmakers}
\address{John Schoenmakers \\ Weierstrass Institute \\ Berlin, Germany}
\email{john.schoenmakers@wias-berlin.de}

\subjclass[2020]{Primary 60L10; Secondary 60L20, 60G40, 91G60}

\keywords{Signature, rough paths, optimal stopping, deep learning, fractional Brownian motion}

\begin{abstract}
  We propose a new method for solving optimal stopping problems (such as American option pricing in finance) under minimal assumptions on the underlying stochastic process $X$.
  We consider classic and randomized stopping times represented by linear and non-linear functionals of the \emph{rough path signature} $\X^{<\infty}$ associated to $X$, and prove that maximizing over these classes of \emph{signature stopping times}, in fact, solves the original optimal stopping problem. Using the algebraic properties of the signature, we can then recast the problem as a (deterministic) optimization problem depending only on the (truncated) expected signature $\E\left[ \X^{\le N}_{0,T} \right]$. By applying a deep neural network approach to approximate the non-linear signature functionals, we can efficiently solve the optimal stopping problem numerically.
  The only assumption on the process $X$ is that it is a continuous (geometric) random rough path. Hence, the theory encompasses processes such as fractional Brownian motion, which fail to be either semi-martingales or Markov processes, and can be used, in particular, for American-type option pricing in fractional models, e.g. on financial or electricity markets. 
\end{abstract}

\maketitle

\setcounter{tocdepth}{1}
\tableofcontents

\section{Introduction}
The theory of \emph{rough paths}, see, for instance, \cite{LCL07,FV10,FH14}, provides a powerful and elegant pathwise theory of stochastic differential equations driven by general classes of stochastic processes -- or, more precisely, rough paths. One of the benefits of the theory is that the resulting solution maps are continuous rather than merely measurable as in the It\=o version. This property has lead to many important theoretical progresses, most notably perhaps Hairer's theory for singular non-linear SPDEs.

In addition to the theoretical advances, tools from rough path analysis -- specifically, the path \emph{signature} -- play an increasingly prominent role in applications, most notably in \emph{machine learning}, see, e.g., \cite{AGGLS18}. Intuitively, the signature $\X^{<\infty}$ of a path $X:[0,T] \to \R^d$ denotes the (infinite) collection of all iterated integrals of all components of the path against each other, i.e., of the form
\begin{equation*}
  \int_{0 < t_1 < \cdots < t_n < T} dX^{i_1}_{t_1} \cdots dX^{i_n}_{t_n},
\end{equation*}
$i_1, \ldots, i_n \in \{1, \ldots, d\}$, $n \ge 0$.
To better understand the importance of the signature, let us first recall that the signature $\X^{<\infty}$ determines the underlying path $X$ (up to ``tree-like excursions''), which was first proved in \cite{HL10} for paths $X$ of bounded variation and later extended to less regular paths. This implies that, in principle, we can always work with the signature rather than the path. (A somewhat dubious proposition, as we merely replace one infinite dimensional object by another one.)
However, the signature is not an arbitrary encoding of the path. Rather, Lyons' universal limit theorem suggests that the solution of a differential equation driven by a rough path $X$ can be approximated with high accuracy by relatively few terms of the signature $\X^{<\infty}$. In that sense, an appropriately truncated signature can be seen as a highly efficient \emph{compression} of $X$, at least in the context of dynamical systems. And, indeed, there is now ample evidence of the power of the signature as a \emph{feature} in the sense of machine learning.

This paper is motivated by another recent application of signatures, namely the solution of stochastic optimal control problems in finance. We follow the presentation of \cite{KLP20}, where a signature-based approach for solving optimal execution problems is developed. In a nutshell, the strategy can be summarized as follows:
\begin{enumerate}
\item Trading strategies for execution of a position can be understood as (continuous) functionals $\phi(X|_{[0,t]})$ of the price path, and, hence, as functionals $\theta(\X^{<\infty}_{0,t})$ of the signature (at least approximately).
\item Taking advantage of the algebraic structure of the signature (see Section~\ref{sec:notat-basic-defin} below), we may efficiently approximate continuous functionals $\theta(\X^{<\infty}_{0,t})$ by \emph{linear} functionals $\langle l,\, \X^{<\infty}_{0,t} \rangle$, which further extends to the whole value function.
\item Taking advantage of the linearity, we may interchange the expectation with the linear functional, thereby reducing the optimal control problem to a problem of maximizing $l \mapsto \langle l,\, \E[ \X^{<\infty}_{0,t}] \rangle$ over some set of dual elements $l$.
\item Truncate the expected signature to a finite level $N$.
\end{enumerate}
The above strategy, in principle, only imposes very mild conditions on the underlying process $X$, mainly that it is continuous but possibly rough. In particular, $X$ does not need to be a Markov process or a semi-martingale. For this reason, we may consider the approach to be model-free.\footnote{The full expected signature $\E[ \X^{<\infty}]$ typically characterizes the distribution of the process $X$, see~\cite{CL16}. In that sense, we would hesitate to regard methods relying on the full (rather than truncated) expected signature as ``model-free''.}
Note, however, that the assumption of the existence of the expected signature $ \E [\X^{<\infty}_{0,t}]$ is a rather strong assumption -- in particular, ruling out many stochastic volatility models, such as the Heston model.

\cite{KLP20} contains extensive numerical examples, indicating the method's excellent performance in various scenarios and models, often beating benchmark methods from the financial literature. On the other hand, theoretical justification of the different approximation steps summarized above is largely missing.

We extend the method of \cite{KLP20} to another important control problem in finance, namely the \emph{optimal stopping} problem, or, in more financial terms, the pricing of \emph{American options}, for which we provide rigorous theory. 
More specifically, we are concerned with the problem of computing
\begin{equation}\label{eq:intro-optimal-stopping}
  \sup_{\tau \in \mathcal{S}} \E \left[ Y_{\tau \wedge T} \right],
\end{equation}
where $Y$ denotes a process adapted to the filtration $(\mathcal{F}_t)_{t \in [0,T]}$ generated by a rough path process $(X_t)_{t \in [0,T]}$, and $\mathcal{S}$ denotes the set of all stopping times w.r.t.~the same filtration. In a financial context, $Y$ usually denotes a reward process discounted with
respect to some num\'eraire. At first glance the optimal stopping problem~\eqref{eq:intro-optimal-stopping} may seem unsuitable for the signature-based approach, as typical candidate stopping times are hitting times of sets, which are generally discontinuous w.r.t.~the underlying path. We solve this issue by using \emph{randomized stopping times}, see \cite{BTW20}. Note that extending the set $\mathcal{S}$ to also include randomized stopping times does not change the value of~\eqref{eq:intro-optimal-stopping}. In the end, we are able to prove that replacing proper stopping times by \emph{signature stopping times} -- i.e., stopping times given in terms of linear functionals of the signature $\X^{<\infty}$ -- does not change the value of the optimal stopping problem either. More precisely, we have
\begin{theorem}
  \label{thr:intro-signature-stopping}
  Assume that $\E[ \| Y \|_\infty] < \infty$. Then,
  \begin{equation*}
    \sup_{\tau \in \mathcal{S}} \E[ Y_{\tau \wedge T}] = \sup_{\tau_l} \E [ Y_{\tau_l \wedge T}],
  \end{equation*}
  where the supremum on the right-hand-side ranges over stopping times $\tau_l \coloneqq \inf \{ t \in [0,T] \, : \, \langle l, \, \hat{\X}_{0,t}^{<\infty} \rangle \ge 1\}$ connected with linear functionals $l$ on the signature process $\hat{\X}_{0,t}^{<\infty}$ (we refer to Sections~\ref{sec:notat-basic-defin} and~\ref{sec:stopping-rules} for precise definitions).
\end{theorem}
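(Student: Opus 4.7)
The plan is to establish the non-trivial inequality $\sup_{\tau \in \mathcal{S}} \E[Y_{\tau \wedge T}] \le \sup_{l} \E[Y_{\tau_l \wedge T}]$; the reverse follows since each $\tau_l$ is an $(\mathcal{F}_t)$-stopping time (the signature process is adapted). The approach factors through three ingredients: randomization of stopping times, density of continuous causal path functionals within adapted processes, and a Stone--Weierstrass-type theorem for signatures that linearizes the latter. The first step is to pass to randomized stopping times: by the well-known equivalence (cf.\ \cite{BTW20}), enlarging $\mathcal{S}$ to randomized stopping times leaves the supremum unchanged, and every randomized stopping time admits a representation $\tau_A = \inf\{t\in [0,T] : A_t \ge E\}$, where $A = (A_t)_{t\in[0,T]}$ is a nondecreasing, adapted, $[0,\infty)$-valued c\`adl\`ag process starting at zero and $E$ is a unit exponential random variable independent of $(\mathcal{F}_t)$.

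Next, approximate $A$ by a continuous causal path functional $\phi(X|_{[0,t]})$ and then invoke Stone--Weierstrass for the signature (cf.\ Section~\ref{sec:notat-basic-defin}) to approximate $\phi(X|_{[0,t]})$ uniformly on compact subsets of path space by a linear functional $\langle l, \hat{\X}^{<\infty}_{0,t}\rangle$. After these two approximations, the randomized stopping time is close to
\[
  \tau_{l,E} := \inf\bigl\{ t \in [0,T] : \langle l, \hat{\X}^{<\infty}_{0,t} \rangle \ge E \bigr\} = \tau_{l/E},
\]
a signature stopping time whose level $l/E$ is random but independent of $X$. To derandomize, condition on $E$ and apply Fubini:
\[
  \E[Y_{\tau_{l/E} \wedge T}] = \E\bigl[\E[Y_{\tau_{l/E} \wedge T} \mid E]\bigr] \le \sup_{l'} \E[Y_{\tau_{l'} \wedge T}],
\]
so any random signature functional is majorized by the deterministic supremum over $l'$.

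The main obstacle lies in the second step: one must ensure that the approximation of the hazard process $A$ by a signature functional actually translates into an approximation of the expected payoff $\E[Y_{\tau \wedge T}]$. Hitting times are notoriously unstable under convergence of the underlying process, particularly when a non-monotone approximant replaces a monotone original, so the continuity of the map $A \mapsto \E[Y_{\tau_A \wedge T}]$ is not automatic. The essential regularizing device is the independent exponential level $E$: for Lebesgue-a.e.\ realization of $E$, the approximating signature functional a.s.\ crosses the level $E$ cleanly rather than tangentially, so dominated convergence — available thanks to the hypothesis $\E[\|Y\|_\infty] < \infty$ — can be applied after integrating out $E$. Making this quantitative, and verifying that the signature approximation is uniform enough in $t$ to control the first hitting times, is where the bulk of the technical work will lie.
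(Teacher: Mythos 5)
Your architecture --- randomize, approximate the stopping policy by continuous adapted path functionals, linearize via Stone--Weierstrass on the signature, then derandomize by conditioning on the exponential level and applying Fubini --- is exactly the paper's chain (Propositions~\ref{prop:opt_cont_pol}, \ref{prop:opt_sig_pol} and \ref{thm:signature-stopping}), and your closing observation $\tau_{l,E}=\tau_{l/E}$ is the same one that finishes the paper's proof. The one structural difference is where the linearization is applied: you approximate the cumulative hazard $A_t$ directly by $\langle l, \hat{\X}^{<\infty}_{0,t}\rangle$, whereas the paper approximates the \emph{intensity} $\theta(\hat{\X}|_{[0,s]})$ by $\langle l, \hat{\X}^{<\infty}_{0,s}\rangle$ and recovers the hazard as $\int_0^t \langle l, \hat{\X}^{<\infty}_{0,s}\rangle^2\,ds = \langle (l\shuffle l)\blue{1}, \hat{\X}^{<\infty}_{0,t}\rangle$ --- still a linear signature functional, but now automatically continuous and nondecreasing in $t$. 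That choice is precisely what dissolves the difficulty you flag and defer: with the squared-and-time-integrated form, Proposition~\ref{lem:regularized_value} expresses the randomized value as $\E[\int_0^T Y_t\,\theta^2\,\varrho(\int_0^t\theta^2)\,dt+\cdots]$, which is manifestly continuous in $\theta$ under uniform convergence on compacts; together with localization to a compact set of paths of probability $\ge 1-\varepsilon$ this yields convergence of the values with no analysis of hitting times at all. Your ``a.e.\ level is crossed cleanly'' argument can probably be pushed through as well (a uniform approximant of a continuous nondecreasing $A$ has hitting times converging for all but countably many levels), but it is exactly the step you leave open, and it is where your non-monotone approximant of $A$ creates the friction you yourself identify; linearizing the intensity rather than the hazard is the cleaner move. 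One further caveat: your first reduction (that it suffices to consider randomized stopping times with a hazard that is a \emph{continuous} path functional) is not free --- for a hard stopping time the hazard jumps to $+\infty$, and the paper needs the Lusin-approximation-plus-powering construction of Proposition~\ref{prop:opt_cont_pol} to recover ordinary stopping times as a.s.\ limits of such randomized ones.
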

The theorem is presented as Proposition~\ref{thm:signature-stopping} below. We note that, following \cite{KLP20}, we extend the path $X$ by adding running time as an additional component. $\hat{\X}^{<\infty}$ denotes the signature of the extended path.

In the next step we need to actually compute a maximizing signature stopping time.
In this context, this most importantly implies replacing the full signature $\hat{\X}^{<\infty}$ by a truncated version $\hat{\X}^{\le N}$. Using some further technical assumptions, Proposition~\ref{thr:main_approx} provides convergence of the corresponding approximations to the value of~\eqref{eq:intro-optimal-stopping} as $N \to \infty$.

Assuming that $Y$ is a polynomial function of $X$ -- or, more generally, of $\X^{<\infty}$
-- we can derive an approximation formula in terms of an optimization problem involving linear functionals of the expected signature $\E[ \hat{\X}^{\le N}]$ rather than the expectation of some functional of the signature. See Corollary~\ref{cor:opt-stopping-expected-sig} for details.

While this approach undoubtedly has many attractive features, very deep truncation levels $N$ might be needed for complicated optimal stopping problems. Hence, we also consider stopping rules which are parameterized by non-linear functionals of the (truncated) signature. More precisely, we consider stopping rules represented by \emph{deep neural networks} (DNNs) applied to the \emph{log-signature}. DNNs are well known for their combination of high expressiveness, especially in high dimensions, relative ease of training, and wide availability of efficient software implementations. On the other hand, the log-signature removes linear redundancies from the full signature, and, therefore, provides a compressed but equivalent representation of the data contained in the signature. We prove the following analogue to Theorem~\ref{thr:intro-signature-stopping}, see Proposition~\ref{prop:log_signature-stopping} for details of the construction and the proof.

\begin{theorem}
  \label{thr:intro-log-signature-stopping}
  Let $\mathcal{T}_{\mathrm{log}}$ denote the set of DNNs $\theta$ of a suitable chosen architecture, chosen such that $\theta$ accepts a truncated log-signature $\log^\otimes \widehat{\mathbb{X}}_{0,s}^{\le N}$ truncated at some level $N = N(\theta)$, $N=1, 2, \ldots$.
Define the \emph{deep, randomized signature stopping rule}
  \begin{equation*}
    \tau^r_{\theta} \coloneqq \inf \left\{ t \ge 0 \ : \ \int_0^t \theta\left( \log^\otimes \widehat{\mathbb{X}}_{0,s}^{\le N} \right)^2 ds \ge Z \right\},
    \end{equation*}
    for an independent r.v.~$Z \sim \mathrm{Exp}(1)$. Assuming that $\mathbb{E}\left[ \|Y\|_\infty \right] < \infty$, we then have
    \begin{equation*}
      \sup_{\tau \in \mathcal{S}} \mathbb{E}\left[ Y_{\tau \wedge T} \right] = \sup_{\theta \in \mathcal{T}_{\mathrm{log}}} \mathbb{E}\left[ Y_{\tau^r_\theta \wedge T} \right] .
    \end{equation*}
\end{theorem}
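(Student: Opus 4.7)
One direction, namely $\sup_\tau \E[Y_{\tau \wedge T}] \geq \sup_{\theta \in \mathcal{T}_{\mathrm{log}}} \E[Y_{\tau^r_\theta \wedge T}]$, is standard: each $\tau^r_\theta$ is a stopping time in the filtration enlarged by the independent exponential $Z$, and the value of the optimal stopping problem is unchanged by such randomization, as developed in \cite{BTW20}. Thus the content of the theorem lies in the reverse inequality: for any $\tau \in \mathcal{S}$ and $\varepsilon > 0$, I will construct a DNN $\theta \in \mathcal{T}_{\mathrm{log}}$ satisfying $\E[Y_{\tau^r_\theta \wedge T}] \geq \E[Y_{\tau \wedge T}] - \varepsilon$.

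The construction has three approximation layers. First, using \cite{BTW20}, I replace $\tau$ by a randomized stopping time $\tau^r_\lambda \coloneqq \inf\{t : \int_0^t \lambda_s\, ds \geq Z\}$, where $\lambda_s = \vartheta_s^2$ for some bounded, non-negative, continuous, $(\mathcal{F}_t)$-adapted process $\vartheta$, incurring an error of at most $\varepsilon/3$ in the objective. This matches the form appearing in $\tau^r_\theta$ with $\lambda = \theta^2$. Second, $\vartheta_s$ is a continuous functional of the time-extended path $\hat{X}|_{[0,s]}$. By tightness of the law of $\hat{X}$, together with uniform integrability of $\|Y\|_\infty$, I fix a compact set $K \subset C([0,T],\R^{d+1})$ with $\E[\|Y\|_\infty; \hat{X} \notin K] \leq \varepsilon/3$, and on $K$ I approximate $\vartheta_s$ uniformly in $s$ by a continuous function $f$ of the truncated log-signature $\tlog \hat{\X}^{\leq N}_{0,s}$. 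This uses the same Stone--Weierstrass argument as in the proof of Theorem~\ref{thr:intro-signature-stopping}, combined with the bijection between truncated signature and truncated log-signature on the image space. Third, I invoke the universal approximation theorem for deep neural networks to replace $f$ by a DNN $\theta \in \mathcal{T}_{\mathrm{log}}$, uniformly on the (compact) log-signature image of $K$.

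It remains to transfer uniform approximation of the integrand into convergence of the expected reward. If $\theta(\tlog \hat{\X}^{\leq N}_{0,s})^2$ approximates $\lambda_s$ uniformly in $s \in [0,T]$ on the event $\{\hat{X} \in K\}$, then $\int_0^{\cdot} \theta^2\, ds$ approximates $\int_0^{\cdot} \lambda_s\, ds$ uniformly in $t$; since $Z$ has an atomless distribution independent of $\lambda$ and $\theta$, the hitting times $\tau^r_\theta$ converge almost surely to $\tau^r_\lambda$ on this event. The bound $|Y| \leq \|Y\|_\infty$ and dominated convergence then give $\E[Y_{\tau^r_\theta \wedge T}; \hat{X} \in K] \to \E[Y_{\tau^r_\lambda \wedge T}; \hat{X} \in K]$, while the contribution on $\{\hat{X} \notin K\}$ is at most $\varepsilon/3$ by the choice of $K$. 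I expect the principal obstacle to be coordinating the three approximation layers so that all errors remain under control on a common compact set carrying almost all probabilistic mass; concretely, one must verify that a single truncation level $N$ suffices for the Stone--Weierstrass step on $K$, which in turn hinges on injectivity of the signature map on geometric rough paths and a uniform modulus of continuity for $\vartheta$ on $K$.
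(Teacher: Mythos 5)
Your overall architecture matches the paper's proof of Proposition~\ref{prop:log_signature-stopping}: reduce to continuous randomized policies (Proposition~\ref{prop:opt_cont_pol}), localize on a compact set of rough paths carrying most of the probability mass, approximate the policy there by functionals of a truncated (log-)signature via Stone--Weierstrass (Lemma~\ref{lemma:sig_dense}), and finish with the universal approximation theorem applied on the compact image $\tlog(K)\subset\mathfrak{g}^N(\R^{d+1})$. The paper organizes this slightly differently --- it first passes through \emph{linear} signature policies $\langle l,\cdot\rangle$ (Proposition~\ref{prop:opt_sig_pol}) and then approximates the single continuous map $\mathbf{x}\mapsto\langle l,\texp(\mathbf{x})\rangle$ by a DNN --- but that is cosmetic. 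The genuine divergence is your final step. The paper never claims $\tau^r_{\theta_n}\to\tau^r_{\theta}$; it explicitly exhibits a counter-example (directly after Proposition~\ref{prop:opt_cont_pol}) showing that uniform convergence of policies does \emph{not} imply convergence of the induced randomized stopping times, and it instead passes to the value through the closed-form conditional law of Proposition~\ref{lem:regularized_value}, proving convergence of the distribution functions $\tilde{F}_n\to\tilde{F}$. You assert almost sure convergence of the stopping times themselves and conclude by dominated convergence. This route does work, but your one-line justification (``since $Z$ has an atomless distribution\dots'') sits exactly on the delicate point and must be expanded: writing $A_n(t)=\int_0^t\theta_n^2\,ds$ and $A(t)=\int_0^t\lambda_s\,ds$, the generalized inverse $z\mapsto\inf\{t:A(t)\ge z\}$ can fail to be a limit of the $\inf\{t:A_n(t)\ge z\}$ only at levels $z$ where $A$ is flat (this is precisely the paper's counter-example); for each fixed path these levels form a countable set, hence a $\P_Z$-null set by atomlessness and independence of $Z$. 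Spelling this out is essential, since without it the claim contradicts the paper's own warning.

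Two smaller corrections. First, the compact set must be taken in the Polish rough-path space $\hat{\Omega}_T^p$ (where tightness and Lemma~\ref{lemma:sig_dense} apply), not in $C([0,T],\R^{d+1})$: for $p>2$ the policy is a functional of the rough path, not merely of the underlying path. Second, your ``principal obstacle'' of finding a single truncation level $N$ for the Stone--Weierstrass step is not actually an obstacle: each approximating linear signature functional has finite degree, so one level $N$ \emph{per approximant} suffices, and that is all that membership in $\mathcal{T}_{\log}$ (where $N=N(\theta)$ may vary with the network) requires.
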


Finally, Section~\ref{sec:numerics} contains extensive numerical examples of both the linearized and the deep log signature method. 
We consider two models. 
Motivated by~\cite{becker2019deep} we consider the problem of optimally stopping a fractional Brownian motion. In this example, we find excellent performance of the deep signature stopping approach.
En passant, we provide explicit formulas for the limiting case $H\to0$.
We also consider an example from optimal control in an electricity market with the price function modeled by a fractional Brownian motion, as motivated by~\cite{bennedsen2017rough}.

\subsection*{Outline of the paper}

Section~\ref{sec:notat-basic-defin} recalls basic definitions from the theory of rough paths and provides the algebraic and analytic setting of signatures. A framework for studying stopped rough paths is presented in Section~\ref{sec:space-stopped-rough}. Stopping times based on continuous functionals on rough paths and their randomized counter-parts are introduced and discussed in Section~\ref{sec:stopping-rules}. The special case of linear functionals of the signature is studied in Section~\ref{sec:lin_stopping_rues}. The following Section~\ref{sec:appr-stopp-probl} contains a fully linear approximation to the optimal stopping problems in terms of the expected truncated signature. Deep signature stopping rules are introduced in Section~\ref{sec:non-linear-rules}. Finally, numerical examples are presented in Section~\ref{sec:numerics}.

\section{Preliminaries}
\label{sec:notat-basic-defin}

We start by introducing the basic definitions needed for understanding signatures and their algebraic and -- in the context of rough paths -- analytic properties. These definitions are standard in the rough path literature, we refer to \cite{LCL07, FH14,FV10} for a more detailed exposition.

\subsection{The tensor algebra}

 Let $V$ be a finite-dimensional $\R$-vector space with basis $\{e_1,\ldots,e_d\}$. The dual space is denoted by $V^*$ with dual basis $\{e_1^*,\ldots,e_d^*\}$. We define the \emph{tensor algebra} and the \emph{extended tensor algebra} by setting
 \begin{align*}
  T(V) \coloneqq \bigoplus_{n = 0}^{\infty} V^{\otimes n} \quad \text{and} \quad T((V)) \coloneqq \prod_{n = 0}^{\infty} V^{\otimes n}
 \end{align*}
 where $V^{\otimes n}$ denotes the $n$-th tensor power of $V$ with the convention $V^{\otimes 0} \coloneqq \R$, $V^{\otimes 1} \coloneqq V$. Note that there is a natural pairing between $T((V))$ and $T(V^*)$ which we denote by
 \begin{align*}
    \langle \cdot,\cdot \rangle \colon T(V^*) \times T((V)) \to \R.
 \end{align*}
 We define sum and product of two elements $\mathbf{a} = (a_n)_{n = 0}^{\infty}, \mathbf{b} = (b_n)_{n = 0}^{\infty} \in T((V))$ by setting
 \begin{align*}
  \mathbf{a} + \mathbf{b} &\coloneqq (a_n + b_n)_{n = 0}^{\infty}, \\
  \mathbf{a} \otimes \mathbf{b} &\coloneqq (\sum_{i + j = n} a_i \otimes b_j)_{n = 0}^{\infty}.
 \end{align*}
 For $\lambda \in \R$, we define $\lambda \mathbf{a} \coloneqq (\lambda a_n)_{n = 0}^{\infty}$. We also let  $\mathbf{0} \coloneqq (0,0,\ldots)$ and $\mathbf{1} \coloneqq (1,0,0,\ldots)$. Note that
 \begin{align*}
  \mathbf{1} \otimes \mathbf{a} = \mathbf{a} \otimes \mathbf{1} = \mathbf{a}
 \end{align*}
 for every $\mathbf{a} \in T((V))$. The \emph{truncated tensor algebra} is defined by
 \begin{align*}
  T^{N}(V) \coloneqq \bigoplus_{n = 0}^{N} V^{\otimes n}.
 \end{align*}
 We define maps $\pi_n \colon T((V)) \to V^{\otimes n}$ and $\pi_{\leq N} \colon T((V)) \to T^{N}(V)$ by $\pi_n(\mathbf{a}) = a_n$ and $\pi_{\leq N}(\mathbf{a}) = (a_0,\ldots,a_N)$ where $\mathbf{a} = (a_n)_{n = 0}^{\infty}$. We will sometimes abuse notation and write $\mathbf{0}$ and $\mathbf{1}$ for the elements $\pi_{\leq N}(\mathbf{0})$ and $\pi_{\leq N}(\mathbf{1})$ in the truncated tensor algebra.

 Next, we consider norms on $T((V))$ and $T(V^*)$. On $V$, we choose the $l^{\infty}$-norm, i.e. for $v = \lambda_1 e_1 + \ldots + \lambda_d e_d$, we set $|v| := \max_i |\lambda_i|$. For elements in $V^*$, we use the $l^1$-norm, i.e. $|v^*| := |\lambda_1| + \ldots + |\lambda_d|$ for $v^* = \lambda_1 e^*_1 + \ldots + \lambda_d e^*_d$. On the tensor powers of $V$ resp. $V^*$, we use the corresponding norms, too. Note that the norms on the tensor products $V^{\otimes n}$ are \emph{admissible}, meaning that if $v = a_1 \otimes \ldots \otimes a_k$ and $\sigma v \coloneqq a_{\sigma(1)} \otimes \ldots \otimes a_{\sigma(k)}$ for a permutation $\sigma$, $|\sigma v| = |v|$, and $|v \otimes w| \leq |v||w|$.
We set
\begin{align*}
 |\mathbf{a}| \coloneqq \sup_{i \in \N_0} |a_i| \in [0,\infty] \quad \text{for } \mathbf{a} = (a_i)_{i = 0}^{\infty} \in T((V))
\end{align*}
and
\begin{align*}
 |\mathbf{b}| \coloneqq \sum_{i = 0}^{\infty} |b_i| \in [0,\infty) \quad \text{for } \mathbf{b} = (b_i)_{i = 0}^{\infty} \in T(V^*).
\end{align*}
Note that we always have
\begin{align*}
 |\langle \mathbf{b}, \mathbf{a} \rangle| \leq |\mathbf{b}||\pi_{\leq N} (\mathbf{a})| \leq |\mathbf{b}||\mathbf{a}|
\end{align*}
where $N = \max\{i \in \N_0\, :\, b_i \neq 0 \}$.

 \subsection{Shuffles}

 In the following, calculations will mainly be performed in the space $T(V^\ast)$. In order to simplify notations, we will replace expressions like $e_{i_1}^* \otimes \cdots \otimes e_{i_n}^*$ by the much simpler form $\blue{i_1} \cdots \blue{i_n}$.
 More precisely, let $\mathcal{W}(\mathcal{A}_d)$ be the linear span of words composed by the letters in the dictionary $\mathcal{A}_d = \{\blue{1},\ldots,\blue{d} \}$. The empty word is denoted by ${\varnothing} \in \mathcal{W}(\mathcal{A}_d)$. We can naturally define the sum $l_1 + l_2$ and the scalar product $\lambda l$ for elements $l,l_1,l_2 \in \mathcal{W}(\mathcal{A}_d)$ and $\lambda \in \R$. If ${w} = \blue{i_1} \cdots \blue{i_n}$ and ${v} = \blue{j_1} \cdots \blue{j_m}$ are two words, the \emph{concatenation} is defined by
 \begin{align*}
  {w} {v} \coloneqq \blue{i_1} \cdots \blue{i_n} \blue{j_1} \cdots \blue{j_m}.
 \end{align*}
 This operation is extended bi-linearly to elements in $\mathcal{W}(\mathcal{A}_d)$. The basis elements $\{ e_{i_1}^* \otimes \cdots \otimes e_{i_n}^*\, :\, i_1,\ldots,i_n \in \{1,\ldots,d\}\}$ in $(V^*)^{\otimes n}$ can be identified with words via the map
 \begin{align*}
  e_{i_1}^* \otimes \cdots \otimes e_{i_n}^* \mapsto \blue{i_1} \cdots \blue{i_n}
 \end{align*}
 which induces an isomorphism $T(V^*) \cong \mathcal{W}(\mathcal{A}_d)$.  We can also think of $\mathcal{W}(\mathcal{A}_d)$ as the space of non-commutative polynomials where the unknown are given by the letters $\{\blue{1},\ldots,\blue{d} \}$. For a word ${w} = \blue{i_1} \cdots \blue{i_n}$, set $\operatorname{deg}(w) \coloneqq n$ and $\operatorname{deg}(\varnothing) \coloneqq 0$. If $l = \lambda_1 {w_1} + \ldots + \lambda_n {w_n} \in \mathcal{W}(\mathcal{A}_d)$ with $\lambda_1, \ldots, \lambda_n \in \R\setminus\{0\}$ and $w_1, \ldots, w_n$ words, we define
 \begin{align*}
  \operatorname{deg}(l) \coloneqq \max_{i = 1\ldots,n} \operatorname{deg}(w_i).
 \end{align*}
 
 Apart from concatenation, there is a second important product defined on $\mathcal{W}(\mathcal{A}_d)$ which is called \emph{shuffle product}: For a word $w$, we set
 \begin{align*}
  {w} \shuffle {\varnothing} \coloneqq  {\varnothing} \shuffle {w} \coloneqq {w}.
 \end{align*}
 If ${w}\blue{i}$ and ${v}\blue{j}$ are words and $\blue{i},\blue{j} \in \mathcal{A}_d$ are letters, we recursively define  $w\blue{i} \shuffle v\blue{j} \in \mathcal{W}(\mathcal{A}_d)$ by
 \begin{align*}
  w\blue{i} \shuffle v\blue{j} \coloneqq (w \shuffle v\blue{j})\blue{i} + (w\blue{i} \shuffle v)\blue{j}.
 \end{align*}
 This operation is extended bi-linearly to a product $\shuffle \colon \mathcal{W}(\mathcal{A}_d) \times \mathcal{W}(\mathcal{A}_d) \to \mathcal{W}(\mathcal{A}_d)$. Note that $\shuffle$ is associative, commutative and distributive over $+$. If $P \in \R[x]$ is a commutative polynomial with unknown variable $x$, i.e. $P(x) = \lambda_0 + \lambda_1 x + \ldots \lambda_n x^n$, we define $P^{\shuffle} \colon \mathcal{W}(\mathcal{A}_d) \to \mathcal{W}(\mathcal{A}_d)$ by setting
 \begin{equation}\label{eq:polynomial-shuffle}
  P^{\shuffle}(l) \coloneqq \lambda_0 {\varnothing} + \lambda_1 l + \lambda_2 (l \shuffle l) + \ldots + \lambda_n l^{\shuffle n},
 \end{equation}
 where $l^{\shuffle k}$ denotes $k$-th shuffle product of $l \in \mathcal{W}(\mathcal{A}_d)$ with itself.

 We define
 \begin{equation}\label{eq:group}
  G(V) \coloneqq \{\mathbf{a} \in T((V)) \setminus\{\mathbf{0}\} \, :\, \langle l_1 \shuffle l_2, \mathbf{a} \rangle = \langle l_1, \mathbf{a} \rangle \langle  l_2, \mathbf{a} \rangle \text{ for every } l_1, l_2 \in T(V^*)  \}
 \end{equation}
 and call it the set of \emph{group-like elements}. Note that $\pi_0(\mathbf{g}) = 1$ for every $\mathbf{g} \in G(V)$. One can show that $(G(V), \otimes)$ is a group with identity $\mathbf{1}$ and inverse given by
 \begin{align*}
  \mathbf{g}^{-1} = \sum_{n \geq 0} (\mathbf{1} - \mathbf{g})^{\otimes n}.
 \end{align*}
 We also set $G^N(V) \coloneqq \pi_{\leq N}(G(V))$ which is a free nilpotent group of order $N$ with respect to the
 ``truncated multiplication'' $\mathbf{a}\,{\otimes}_{G^N(V)}\,\mathbf{b}$ $:=$ $\pi_N(\mathbf{a}\otimes\mathbf{b}),$  for $\mathbf{a},\mathbf{b}\in G^N(V)$. However, we will not distinguish between the multiplication symbols on $G^N(V)$ and $G(V)$ and use $\otimes$ in both cases.

 \begin{remark}
   The relation $\langle l_1 \shuffle l_2, \mathbf{a} \rangle = \langle l_1, \mathbf{a} \rangle \langle  l_2, \mathbf{a} \rangle$ for $\mathbf{a} \in G(V)$ implies that
   \begin{align}\label{eqn:lin_sig}
    P(\langle l,  \mathbf{a} \rangle) = \langle P^{\shuffle}(l), \mathbf{a} \rangle
   \end{align}
   for any polynomial $P$. This is really the justification for introducing the shuffle product, as it provides an explicit linearization of polynomials in the signature.
 \end{remark}

 \subsection{Rough paths and their signatures}

Now that the algebraic setting for signatures is developed (for the purposes of this paper), we can finally consider the analytic properties of (rough) paths. More concretely, given a path $X:[0,T] \to V$ (of sufficient regularity), we will associate to it a function $\mathbb{X}$ taking values in the truncated tensor algebra, which is the fundamental building block of rough path theory.
Set $\Delta_T \coloneqq \{(s,t) \in [0,T]^2 \, :\, 0 \leq s \leq t \leq T\}$. For a map $\mathbb{X} \colon \Delta_T \to T^{N}(V)$, we define its \emph{$p$-variation}
\begin{align*}
 \| \mathbb{X} \|_{p-\mathrm{var};[s,t]} \coloneqq \max_{k = 1,\ldots,N} \sup_{\mathcal{D} \subset [s,t]} \left( \sum_{t_i \in \mathcal{D}} |\pi_k(\mathbb{X}_{t_i,t_{i+1}})|^{\frac{p}{k}} \right)^{\frac{k}{p}}
\end{align*}
where the supremum ranges over all partitions $\mathcal{D}$ of $[s,t]$. We will use the notation $\| \mathbb{X} \|_{p-\mathrm{var}} \coloneqq \| \mathbb{X} \|_{p-\mathrm{var};[0,T]}$. For $\mathbb{X}, \Y \colon \Delta_T \to T^{N}(V)$, we define the \emph{$p$-variation distance}
\begin{align*}
 d_{p-\mathrm{var};[s,t]}(\X,\Y) \coloneqq \| \mathbb{X} - \Y\|_{p-\mathrm{var};[s,t]}
\end{align*}
and set $d_{p-\mathrm{var}}(\X,\Y) \coloneqq d_{p-\mathrm{var};[0,T]}(\X,\Y)$. A \emph{weakly geometric $p$-rough path $\mathbb{X}$} is a continuous path $\mathbb{X} \colon [0,T] \to G^{\lfloor p \rfloor}(V)$ with $\X_0 = \mathbf{1}$ and $\|\X\|_{p-\mathrm{var}} < \infty$ where we set $\X_{s,t} \coloneqq \X_s^{-1} \otimes \X_t$ for $s \leq t$. Note that $\X_t = \X_{0,t}$. We denote the space of weakly geometric $p$-rough paths by $\mathcal{W} \Omega_T^p$ and equip it with the distance $d_{p-\mathrm{var}}$. If $X \colon [0,T] \to V$ is a continuous path of bounded variation, we define its \emph{signature} $\X^{<\infty} \colon [0,T] \to T((V))$ by
\begin{align*}
 \pi_k(\X^{<\infty}_t) \coloneqq \int_{0 < t_1 < \ldots < t_k < t} dX_{t_1} \otimes \cdots \otimes dX_{t_k}.
\end{align*}
The \emph{truncated signature} $\X^{\leq N} \colon [0,T] \to T^N(V)$ is defined by $\X^{\leq N} \coloneqq \pi_{\leq N}(\X^{<\infty})$. It can be checked that $\X^{<\infty}$ takes values in $G(V)$ and we set $\X^{<\infty}_{s,t} \coloneqq (\X^{<\infty}_s)^{-1} \otimes \X^{<\infty}_t$ so that
\begin{align*}
 \pi_k(\X^{<\infty}_{s,t}) = \int_{s < t_1 < \ldots < t_k < t} dX_{t_1} \otimes \cdots \otimes dX_{t_k}.
\end{align*}
One can also show that $\X^{\leq N}$ is an element in $\mathcal{W} \Omega_T^p$ for every $p \geq 1$ with $N = \lfloor p \rfloor$.

A \emph{geometric $p$-rough path $\mathbb{X}$} is a weakly geometric rough path $\X \in \mathcal{W} \Omega_T^p$ for which there exists a sequence of piecewise smooth paths $(X_n)$ such that $d_{p-\mathrm{var}}(\X, \X_n^{\leq \lfloor p \rfloor}) \to 0$ as $n \to \infty$. The space of geometric rough paths is denoted by $\Omega_T^p$. It can be shown that the inclusion $\Omega_T^p \subset \mathcal{W} \Omega_T^p$ is strict and that $\Omega_T^p$ is a Polish space. From Lyons' Extension theorem \cite[Theorem 3.7]{LCL07}, every geometric rough path $\mathbb{X} \in \Omega_T^p$ has a unique lift $\mathbb{X}^{<\infty}$ which is a path in $G(V)$, satisfying $\| \pi_{\leq N}(\mathbb{X}^{<\infty}) \|_{p-\mathrm{var}} < \infty$ for every $N \geq 1$ and $\pi_{\leq \lfloor p \rfloor}(\mathbb{X}^{<\infty}) = \mathbb{X}$. We call $\mathbb{X}^{<\infty}$ the \emph{signature of the rough path $\mathbb{X}$} and $\mathbb{X}^{\leq N} \coloneqq \pi_{\leq N}(\mathbb{X}^{<\infty})$ its \emph{truncated signature}.

Similarly, for $V = \R^{1+d}$, we define the space $\hat{\Omega}_T^p$ as the closure of rough path lifts $\hat{\X}^{\leq \lfloor p \rfloor}$ in the $p$-variation distance where $\hat{X}_t = (t,X_t) \in \R^{1+d}$ and $X$ is piecewise smooth. It follows that $\hat{\Omega}_T^p$ is Polish.

\begin{remark}
  \label{rem:path-letters}
  Following the notation introduced above, the letter $\blue{1}$ corresponds to the running time component $t$ of the path $\hat{X}$, whereas the components of $X$ correspond to the letters $\blue{2}, \ldots, \blue{d+1}$, respectively.
\end{remark}

\begin{example}[Brownian motion as a rough path]\label{exmpl:brownian-motion} Let $X$ be a $d$-dimensional Brownian motion.
In this case a natural lift to a geometric rough path $\mathbb{X}\in\Omega^{p}_T$ with $p\in(2, 3)$ is given by
\begin{align*}
\mathbb{X}_{s,t} = \left(1, X_{s,t}, \int_s^t X_{s, u} \otimes \circ dX_u \right), \quad 0 \le s \le t \le T
\end{align*}
where $X_{s,t} = X_t - X_s$ and for all $\blue{i},\blue{j} \in \mathcal{A}_d$ the tensor valued Stratonovich integral is given by
\begin{align*}
\left\langle \blue{ij}, \int_s^t X_{s, u} \otimes \circ dX_u\right\rangle = \int_s^t X^{i}_{s,u} \circ d X^{j}_u = \int_s^t X^{i}_{s,u} d X^{j}_u + \frac{1}{2}[X^i, X^j]_{s,t}.
\end{align*}
Indeed, to see that $\mathbb{X}\in\mathcal{W}\Omega_T^p$, one may readily check that $\mathbb{X}_{s,t} \in G^2(V)$ is an immediate consequence of the product rule and the rough path regularity of $\mathbb{X}$ follows from a generalized Kolmogorov criterion (see \cite[Theorem 3.1]{FH14}).
As it is well known that the integral with respect to the piecewise linear approximation of Brownian motion converges to the Stratonovich integral a.s., we also see that $\mathbb{X}\in\Omega_T^p$.
The signature $\mathbb{X}^{<\infty}$ of the enhanced Brownian motion $\mathbb{X}$ is then given by the iterated integrals of all orders, i.e. for any word $w= \blue{i_1}\cdots\blue{i_k} \in \mathcal{W}(\mathcal{A}_d)$ we have
\begin{align*}
\langle \blue{i_1}\cdots\blue{i_k}, \X^{<\infty}_{0,t} \rangle = \int_{0 < t_1 < \ldots < t_k < t} \circ dX^{i_1}_{t_1} \cdots \circ dX^{i_k}_{t_k}.
\end{align*}
An explicit form of the expected signature is also known due to Fawcett \cite{fawcett2002problems}
\begin{align*}
\E(\X^{<\infty}_{0,t}) = \exp^{\otimes}\left(\frac{1}{2}t\sum_{i=1}^d e_i \otimes e_i \right) = \sum_{n=0}^\infty \frac{1}{n!}\frac{t^n}{2^n} \left(\sum_{i=1}^d e_i \otimes e_i \right)^{\otimes n}.
\end{align*}
Note that this construction of a geometric rough path works in principle for all continuous semimartingales and we refer to \cite[Section 14]{FV10} for more detail.
\end{example}

\begin{example}[Fractional Brownian motion]\label{exmpl:fbm}
Let $X$ be a one-dimensional fractional Brownian motion with Hurst parameter $H\in (0,1)$, i.e. $X$ is a zero mean Gaussian process with covariance function
\begin{align}\label{eq:cov_fbm}
\E( X_s X_t ) = \frac{1}{2}\left( |s|^{2H} + |t|^{2H} - |t-s|^{2H}\right), \quad 0 \le s \le t.
\end{align}
Recall that the sample paths of $X$ are a.s.~$(H-\varepsilon)$-H\"older continuous for any $\varepsilon > 0$. In case $H=1/2$ $X$ is just a standard Brownian motion, in case $H \neq 1/2$, however, $X$ is not a Markov process and not a semimartingale.
However, since $X$ is one-dimensional ($V=\R$) there is a trivial lift to a geometric rough path $\mathbb{X}\in\Omega_T^{p}$ for any $p\in(1/H, 1+ 1/H)$ given by
\begin{align*}
\mathbb{X}_{s,t} = \left(1, X_{s,t}, \frac{1}{2}(X_{s,t})^{\otimes 2}, ..., \frac{1}{\lfloor p\rfloor!}(X_{s,t})^{\otimes \lfloor p\rfloor} \right) \equiv \texp_{\lfloor p\rfloor}(X_{s,t }) \in G^{\lfloor p\rfloor}(V), \quad 0 \le s \le t \le T.
\end{align*}
As we will see in the next section, we are particulary interested in the process $\hat{X}$ defined by $\hat{X}_t = (t,X_t)\in \R^2$.
The first component of $\hat{X}$ is of locally bounded variation and therefore it can be lifted to a geometric rough path $\hat{\mathbb{X}}\in\hat\Omega_T^{p}$ (see \cite[Theorem 9.26]{FV10}).
Intuitively speaking we can make use of the abundant regularity of the first component $\hat{X}$ in order to define iterated integrals by imposing the integration by parts rule.
More precisely in case $p > 2$ we have
\begin{align*}
\langle \blue{12}, \mathbb{\hat{\mathbb{X}}}_{s,t}\rangle =  \langle \blue{2}, \mathbb{\hat{\mathbb{X}}}_{s,t}\rangle\langle \blue{1}, \mathbb{\hat{\mathbb{X}}}_{s,t}\rangle - \langle \blue{21}, \mathbb{\hat{\mathbb{X}}}_{s,t}\rangle= X_{s,t}(t-s) - \int_{s}^t X_{s,u} du,
\end{align*}
and the right hand side is clearly well defined.
Using the shuffle identity, this reasoning can be carried on to express all components of the signature $\hat{\mathbb{X}}^{<\infty}$ in terms of increments of $X$, finite-variation integrals and products thereof.
\end{example}

\section{The space of stopped rough paths}
\label{sec:space-stopped-rough}

We will now consider rough paths $\Z$ defined on some intervals $[0,s] \subset [0,T]$. In order to naturally model the notion of \emph{adaptedness} to a filtration, we will consider functionals of the restriction of a rough path $\Z$ to a subinterval of its domain. Hence, the analysis of the corresponding control problem requires us to define a distance of rough paths with different domains. Following~\cite{KLP20}, we will use a distance motivated by Dupire's functional It\=o calculus, see \cite{Dup19, ContFournie10}. This means, when we compare a path $Z^1$ defined on $[0,s]$ and another path $Z^2$ defined on $[0,t]$ with $s<t$, we will extend $Z^1$ to $[0,t]$ by $Z^1_u \coloneqq Z^1_s$ for $s \le u \le t$. We will, in principle, use the same construction for rough paths, but recall that we are considering paths $u \mapsto (u,X_u)$ in our framework, and extending the time component of such a path in a constant way does not make much sense. Instead, we will apply Dupire's extension to the $X$-component, but use the linear extension (i.e., the exact one) for the time component.

More precisely, let $\Z|_{[0,s]} \in \hat{\Omega}_s^p$ and $s \leq t$. By definition, there exists a sequence $Z^n_u = (u,X^n_u)$ where $X^n \colon [0,s] \to \R^d$ is a piecewise smooth path such that $d_{p-\mathrm{var};[0,s]}(\Z|_{[0,s]},\Z^{n; \leq \lfloor p \rfloor}) \to 0$ as $n \to \infty$. Set $\tilde{X}^n_u \coloneqq X^n_{u \wedge s}$ for $u \in [0,t]$ and $\tilde{Z}^n_u \coloneqq (u, \tilde{X}^n_u)$. One can check that $\tilde{\mathbb{Z}}^{n; \leq \lfloor p \rfloor}$ is a Cauchy sequence in $\hat{\Omega}^p_t$, and we denote the limit by $\tilde{\Z}|_{[0,t]}$. One can also check that the definition of $\tilde{\Z}|_{[0,t]}$ does not depend on the choice of the sequence $X^n$. By construction we have that $\tilde{\Z}|_{[0,s]} = \Z|_{[0,s]}$, which motivates the following definition.

\begin{definition}\label{defn:stopped_rp}
  For $T > 0$, we set $\Lambda _T \coloneqq \bigcup_{t \in [0,T]} \hat{{\Omega}}_t^p$ and call it the \emph{space of stopped rough paths}. We equip it with the metric
  \begin{align*}
    d(\mathbb{X}|_{[0,t]}, \mathbb{Y}|_{[0,s]}) \coloneqq d_{p-\mathrm{var};[0,t]}(\mathbb{X}|_{[0,t]}, \tilde{\mathbb{Y}}|_{[0,t]}) + |t-s|
  \end{align*}
  where we assume $s \leq t$ and $\tilde{\mathbb{Y}}|_{[0,t]}$ is the stopped rough path constructed as explained above.
\end{definition}

Let us mention that $\Lambda_T$ is Polish. For this and related simple technical facts about the topology of $\Lambda_T$, we refer to the Appendix \ref{sec:techn-aspects-stopp}.
Later on we will use that $\mathbbm{1}_{\{ \tau(\omega) \leq t \}}$ can be represented as a measurable map of the restricted rough path.

\begin{lemma}\label{lem:ex_theta_stop}
Let $\hat{\mathbb{X}}$ be a stochastic process in $\hat{\Omega}_T^p$ and set $\mathcal{F}_t \coloneqq \sigma(\hat{\mathbb{X}}_{0,s}\, :\, 0 \leq s \leq t) = \sigma(\hat{\X}|_{[0,t]})$. Let $\tau$ be a stopping time with respect to $(\mathcal{F}_t)$. Then there is a Borel measurable map $\theta \colon \Lambda_T \to \{0,1\}$ such that
\begin{align*}
   \theta({\hat{\mathbb{X}}}(\omega)|_{[0,t]}) = \mathbbm{1}_{\{ \tau(\omega) \leq t \}}
\end{align*}
for every $\omega \in \Omega$.
\end{lemma}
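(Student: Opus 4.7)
The plan is to pull $\tau$ back to a Borel function on the full path space $\hat{\Omega}_T^p$, and then precompose with the Dupire extension $E \colon \Lambda_T \to \hat{\Omega}_T^p$ in order to obtain a well-defined Borel function on $\Lambda_T$. Since $\mathcal{F}_T = \sigma(\hat{\mathbb{X}}|_{[0,T]}) = \sigma(\hat{\mathbb{X}})$, the Doob--Dynkin lemma supplies a Borel function $\tilde{\tau} \colon \hat{\Omega}_T^p \to [0,\infty]$ with $\tau(\omega) = \tilde{\tau}(\hat{\mathbb{X}}(\omega))$ for every $\omega \in \Omega$. The stopping-time property of $\tau$, combined with $\mathcal{F}_t = \sigma(\hat{\mathbb{X}}|_{[0,t]})$, transfers to $\tilde{\tau}$: the event $\{\tilde{\tau} \le t\} \subset \hat{\Omega}_T^p$ depends on a path $\mathbb{Z}$ only through its restriction $\mathbb{Z}|_{[0,t]}$.

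Next, let $E \colon \Lambda_T \to \hat{\Omega}_T^p$ denote the Dupire extension $E(\mathbb{Z}|_{[0,s]}) := \tilde{\mathbb{Z}}|_{[0,T]}$ from the paragraph preceding Definition~\ref{defn:stopped_rp}, and let $\pi \colon \Lambda_T \to [0,T]$, $\mathbb{Z}|_{[0,s]} \mapsto s$, be the time projection. Both maps are continuous on $(\Lambda_T, d)$: $\pi$ directly from the definition of $d$, and $E$ via the Cauchy-sequence construction of the extension (this is one of the topological facts about $\Lambda_T$ that should be collected in the paper's appendix). I then set
\begin{equation*}
\theta(\mathbb{Z}|_{[0,s]}) \coloneqq \mathbbm{1}_{\bigl\{ \tilde{\tau}(E(\mathbb{Z}|_{[0,s]})) \le \pi(\mathbb{Z}|_{[0,s]}) \bigr\}},
\end{equation*}
which is Borel measurable as a composition of Borel and continuous maps into $\{0,1\}$.

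To verify the required identity, fix $\omega \in \Omega$ and $t \in [0,T]$, and set $\mathbb{Z}_\star \coloneqq \hat{\mathbb{X}}(\omega)$ and $\mathbb{Z}_E \coloneqq E(\hat{\mathbb{X}}(\omega)|_{[0,t]})$. Both elements of $\hat{\Omega}_T^p$ coincide with $\hat{\mathbb{X}}(\omega)|_{[0,t]}$ when restricted to $[0,t]$, so the stopping-time property of $\tilde{\tau}$ forces $\mathbbm{1}_{\{\tilde{\tau}(\mathbb{Z}_\star) \le t\}} = \mathbbm{1}_{\{\tilde{\tau}(\mathbb{Z}_E) \le t\}}$; the left-hand side equals $\mathbbm{1}_{\{\tau(\omega) \le t\}}$ and the right-hand side equals $\theta(\hat{\mathbb{X}}(\omega)|_{[0,t]})$, which is exactly the claimed identity. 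I expect the main technical obstacle to be the continuity (or at least Borel measurability) of the extension map $E$ in the $p$-variation topology of stopped rough paths; once this has been settled in the appendix, all remaining steps are routine applications of Doob--Dynkin and the definition of a stopping time.
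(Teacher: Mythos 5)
Your construction of $\theta$ is clean, but the step you treat as automatic --- ``the stopping-time property of $\tau$ transfers to $\tilde{\tau}$, so that $\{\tilde{\tau}\le t\}$ depends on a path only through its restriction to $[0,t]$'' --- is exactly where the content of the lemma lies, and it does not follow from Doob--Dynkin. What the hypothesis $\{\tau\le t\}\in\mathcal{F}_t=\sigma(\hat{\mathbb{X}}|_{[0,t]})$ gives you is, for each fixed $t$, \emph{some} Borel set $A_t\subset\hat{\Omega}_t^p$ with $(\hat{\mathbb{X}}|_{[0,t]})^{-1}(A_t)=\{\tau\le t\}$. The Doob--Dynkin representative $\tilde{\tau}$ obtained from $\mathcal{F}_T$-measurability is an essentially arbitrary Borel function satisfying $\tilde{\tau}\circ\hat{\mathbb{X}}=\tau$; the two sets $\{\tilde{\tau}\le t\}$ and $r_t^{-1}(A_t)$ (with $r_t$ the restriction map to $[0,t]$) merely have the same preimage under $\hat{\mathbb{X}}$, i.e.\ they agree on the range of $\hat{\mathbb{X}}$, but nothing forces them to agree elsewhere. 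Your verification evaluates $\tilde{\tau}$ at $\mathbb{Z}_E=E(\hat{\mathbb{X}}(\omega)|_{[0,t]})$, the Dupire-stopped path, which is in general \emph{not} in the range of $\hat{\mathbb{X}}$ (a typical realization of $\hat{\mathbb{X}}$ does not freeze its $X$-component after time $t$), so the equality $\mathbbm{1}_{\{\tilde{\tau}(\mathbb{Z}_\star)\le t\}}=\mathbbm{1}_{\{\tilde{\tau}(\mathbb{Z}_E)\le t\}}$ is unjustified at precisely the point where you need it. In short, you would need a representative $\tilde{\tau}$ that is a genuine stopping time for the canonical filtration on $\hat{\Omega}_T^p$ (a Galmarino-type property), and producing one is not a routine consequence of the hypotheses; it amounts to enforcing consistency of the sets $A_t$ across $t$ off the range of $\hat{\mathbb{X}}$.

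The paper sidesteps this by never introducing a single path-space representative of $\tau$: it works directly with the family $(A_t)_{t\in[0,T]}$, sets $f(t,\mathbb{W})=\mathbbm{1}_{A_t}(\mathbb{W}|_{[0,t]})$, and obtains joint measurability in $(t,\mathbb{W})$ by discretizing $t$ along dyadic grids and taking an iterated $\limsup$ from the right, exploiting monotonicity and right-continuity of $t\mapsto\mathbbm{1}_{\{\tau\le t\}}$; only then does it compose with the continuous map $\mathbb{Z}|_{[0,t]}\mapsto(t,\tilde{\mathbb{Z}}|_{[0,T]})$. Note also that the obstacle you flag at the end (measurability or continuity of the extension map $E$) is not the real difficulty --- that is comparatively easy and is handled in the appendix. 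The difficulty is the one your ``transfer'' sentence assumes rather than proves.
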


 \begin{proof}
For every $t \in [0,T]$, $\{\tau \leq t\}$ is $\sigma(\hat{\mathbb{X}}|_{[0,t]})$-measurable, hence there is a set $A_t \in \mathcal{B}({\hat{\Omega}}^p_t)$ such that $(\hat{\mathbb{X}}|_{[0,t]})^{-1}(A_t) = \{\tau \leq t\}$. It follows that
  \begin{align}\label{eqn:setsA}
    \mathbbm{1}_{\{ \tau(\omega) \leq t \}} = \mathbbm{1}_{A_t}({\hat{\mathbb{X}}}(\omega)|_{[0,t]})
  \end{align}
  for every $\omega \in \Omega$. Define $\phi \colon \Lambda_T \to [0,T] \times \hat{\Omega}_T^p$ as $\phi(\mathbb{X}|_{[0,t]}) = (t,\tilde{\mathbb{X}}|_{[0,T]})$ where $\tilde{\mathbb{X}}|_{[0,T]}$ denotes the stopped process defined in Definition \ref{defn:stopped_rp} . Note that $\phi$ is continuous, thus measurable. Define $f \colon [0,T] \times \hat{\Omega}_T^p \to \R$ as $f(t,\hat{\mathbb{X}}) \coloneqq \mathbbm{1}_{A_t}(\hat{\mathbb{X}}|_{[0,t]})$. For fixed $t$, $\hat{\mathbb{X}} \to \hat{\mathbb{X}}|_{[0,t]}$ is continuous and $\hat{\mathbb{X}}|_{[0,t]} \mapsto \mathbbm{1}_{A_t}(\hat{\mathbb{X}}|_{[0,t]})$ is measurable, therefore $\hat{\mathbb{X}} \mapsto f(t,\hat{\mathbb{X}})$ is measurable. For $n \in \N$, define $I^n_k \coloneqq [k/2^n T, (k+1)/2^n T)$ for $k = 0, \ldots, 2^n - 2$, $I^n_{2^n -1} \coloneqq [(2^n -1)/2^n T, T]$ and $t^n_k \coloneqq k/2^n T$. Set
  \begin{align*}
   f_n(t,\hat{\mathbb{X}}) \coloneqq \sum_{k = 0}^{2^n -1} f(t^n_k,\hat{\mathbb{X}}) \mathbbm{1}_{I^n_k}(t)
  \end{align*}
  which is measurable for every $n \in \N$. Set
  \begin{align*}
    \tilde{f}(t,\hat{\mathbb{X}}) \coloneqq \limsup_{m \to \infty} \limsup_{n \to \infty} f_n(t+1/m,\hat{\mathbb{X}}) \quad \text{and} \quad \theta(\hat{\mathbb{X}}|_{[0,t]}) \coloneqq (\tilde{f} \circ \phi)(\hat{\mathbb{X}}|_{[0,t]}).
  \end{align*}
  The map $\theta$ is thus measurable and satisfies
  \begin{equation*}
   \theta(\hat{\mathbb{X}}(\omega)|_{[0,t]}) = \limsup_{m \to \infty} \limsup_{n \to \infty} \sum_{k = 0}^{2^n -1} \mathbbm{1}_{\{ \tau(\omega) \leq t^n_k \}}\mathbbm{1}_{I^n_k}(t + 1/m) = \mathbbm{1}_{\{ \tau(\omega) \leq t \}}. \qedhere
  \end{equation*}
 \end{proof}

\section{Randomized stopping times}
\label{sec:stopping-rules}
In Lemma \ref{lem:ex_theta_stop} we have seen that a stopping time can be represented
by a stopping policy of the form $\theta: \Lambda_T \to \{0, 1\}$.
In the next step, we reverse the order and define stopping times associated to \emph{continuous} stopping policies of the form $\theta: \Lambda_T \to \R$.
Of course, for a given stopping time, there is no reason why it should be representable by a continuous stopping policy.
Indeed, relevant stopping times -- such as hitting times of even nice sets -- are often discontinuous functions of the underlying path. 
We will see, however, that stopping times, in particular the optimal stopping times for our problem, can be approximated by stopping times induced by continuous policies, in the sense that the corresponding value functions converge.
Later, in Section \ref{sec:lin_stopping_rues} and Section \ref{sec:non-linear-rules} we will see that this is also true when we restrict to the subclasses of continuous stopping policies given by linear functionals respectively deep neural networks applied to the signature.

Let $(\Omega,\mathcal{F},\P)$ be a probability space. In this and the following sections, $\hat{\mathbb{X}}$ denotes a stochastic process in $\hat{\Omega}^p_T$ and $Y \colon [0,T] \times \Omega \to \R$ is a real-valued continuous stochastic process adapted to the filtration $(\mathcal{F}_t)$, $\mathcal{F}_t = \sigma(\hat{\X}_{0,s}\, :\, 0 \leq s \leq t)$. 
Denote by $\mathcal{S}$ the space of all $(\mathcal{F}_t)$-stopping times.
We are trying to solve the optimal stopping problem for $Y$, i.e., in a financial context $Y$ corresponds to a cash-flow process.
For simplicity, we assume that $X_0 = 0$.

We consider \emph{randomized stopping times}, which relax proper stopping times and lead to much more regular approximation problems. We note that similar techniques have been used in \cite{BTW20} in the context of numerical methods for American option pricing.

\begin{definition}\label{def:rand-stopping-times}
We set $\mathcal{T} \coloneqq C(\Lambda_T,\R)$ and call it the space of \emph{continuous stopping policies}. 
Let $Z$ be a non-negative random variable independent of $\hat{\mathbb{X}}$ and such that $\P(Z = 0) = 0$. For a continuous stopping policy $\theta \in \mathcal{T}$, we define the \emph{randomized stopping time} by
    \begin{align}\label{eqn:randomized_stopping_time}
      \tau^r_{\theta} \coloneqq \inf \left\{t \geq 0\,:\, \int_0^{t \wedge T} \theta (\hat{\mathbb{X}}|_{[0,s]})^2\, ds \geq Z \right\}
    \end{align}
    where $\inf \emptyset = + \infty$.
 \end{definition}

Next we prove that stopping times can be approximated by randomized stopping times based on continuous stopping policies.

\begin{proposition}\label{prop:opt_cont_pol}
 For every stopping time $\tau \in \mathcal{S}$, there exists a sequence $\theta_n \in \mathcal{T}$ such that the randomized stopping times $\tau^r_{\theta_n}$ satisfy $\tau_{\theta_n}^r \to \tau$ almost surely as $n \to \infty$. In particular, if $\E[ \|Y\|_{\infty}] < \infty$, then
 \begin{align*}
  \sup_{\theta \in \mathcal{T}} \E[{Y}_{\tau^r_{\theta} \wedge T}] = \sup_{\tau \in \mathcal{S}} \E[Y_{\tau \wedge T}].
 \end{align*}
\end{proposition}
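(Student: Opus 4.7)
I would prove the two inequalities separately. The direction $\sup_{\theta \in \mathcal{T}} \E[Y_{\tau^r_\theta \wedge T}] \le \sup_{\tau \in \mathcal{S}} \E[Y_{\tau \wedge T}]$ is standard: for every $\theta \in \mathcal{T}$ the process $s \mapsto \int_0^s \theta(\hat{\mathbb{X}}|_{[0,u]})^2 \, du$ is continuous and $(\mathcal{F}_t)$-adapted, so for each fixed $z > 0$ the time $\tau^r_{\theta,z} := \inf\bigl\{s : \int_0^s \theta^2 \, du \ge z \bigr\}$ is an $(\mathcal{F}_t)$-stopping time. Independence of $Z$ from $\hat{\mathbb{X}}$ (hence from $Y$) combined with Fubini then gives
\begin{equation*}
\E[Y_{\tau^r_\theta \wedge T}] \;=\; \E_Z \bigl[\E\bigl[Y_{\tau^r_{\theta,Z} \wedge T} \,\big|\, Z\bigr]\bigr] \;\le\; \sup_{\tau \in \mathcal{S}} \E[Y_{\tau \wedge T}].
\end{equation*}

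The substantive content is the construction, for an arbitrary $\tau \in \mathcal{S}$, of a sequence $\theta_n \in \mathcal{T}$ with $\tau_{\theta_n}^r \to \tau$ a.s. Lemma~\ref{lem:ex_theta_stop} provides a Borel measurable $\theta^\ast : \Lambda_T \to \{0,1\}$ with $\theta^\ast(\hat{\mathbb{X}}|_{[0,t]}) = \mathbbm{1}_{\{\tau \le t\}}$. Introduce the finite Borel measure $\nu(A) := \int_0^T \P(\hat{\mathbb{X}}|_{[0,t]} \in A) \, dt$ on the Polish space $\Lambda_T$. By Lusin's theorem there is a closed set $K_n \subset \Lambda_T$ with $\nu(K_n^c) \le n^{-4}$ such that $\theta^\ast|_{K_n}$ is continuous; since $\Lambda_T$ is metrizable, Urysohn's lemma extends it to a continuous $\psi_n : \Lambda_T \to [0,1]$ that agrees with $\theta^\ast$ on $K_n$. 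Set $\theta_n := n \psi_n \in \mathcal{T}$; the factor $n$ is essential, since $\int_0^t (\theta^\ast)^2 \, ds = (t-\tau)^+$ would otherwise yield $\tau^r_{\theta^\ast} = \tau + Z$ rather than $\tau$.

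The key estimate is an almost-sure pathwise control of the bad set $B_n(\omega) := \{s \in [0,T] : \psi_n(\hat{\mathbb{X}}(\omega)|_{[0,s]}) \ne \theta^\ast(\hat{\mathbb{X}}(\omega)|_{[0,s]})\}$. Fubini gives $\E[\mathrm{Leb}(B_n)] \le \nu(\{\psi_n \ne \theta^\ast\}) \le n^{-4}$, and Markov's inequality together with Borel--Cantelli yields $n^2 \mathrm{Leb}(B_n) \to 0$ almost surely. Splitting $\int_0^t \theta_n^2 \, ds$ across $B_n$ and $B_n^c$ and using $\theta^\ast \in \{0,1\}$ together with $\int_0^t (\theta^\ast)^2 \, ds = (t-\tau)^+$ yields the pathwise sandwich
\begin{equation*}
n^2 (t - \tau)^+ - n^2 \mathrm{Leb}(B_n) \;\le\; \int_0^t \theta_n(\hat{\mathbb{X}}|_{[0,s]})^2 \, ds \;\le\; n^2 (t - \tau)^+ + n^2 \mathrm{Leb}(B_n),
\end{equation*}
uniformly in $t \in [0,T]$. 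Since the error $n^2 \mathrm{Leb}(B_n) \to 0$ almost surely and $Z > 0$ almost surely, the threshold $Z$ is not crossed before any time $<\tau$ for large $n$, while for any $t > \tau$ the lower bound eventually exceeds $Z$; hence $\tau^r_{\theta_n} \to \tau$ a.s.

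To close the proof, path continuity of $Y$ implies $Y_{\tau^r_{\theta_n} \wedge T} \to Y_{\tau \wedge T}$ a.s., and dominated convergence with integrable dominator $\|Y\|_\infty$ gives $\E[Y_{\tau^r_{\theta_n} \wedge T}] \to \E[Y_{\tau \wedge T}]$; taking the supremum over $\tau \in \mathcal{S}$ yields the second inequality. The main technical obstacle is the pathwise almost-sure control of $n^2 \mathrm{Leb}(B_n)$; this is exactly why I take the Lusin approximation rate to be $n^{-4}$, which leaves enough room, after multiplication by $n^2$, for Borel--Cantelli to force almost-sure convergence.
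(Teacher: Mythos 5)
Your proof is correct, and its overall skeleton is the same as the paper's: obtain a measurable $\{0,1\}$-valued policy from Lemma~\ref{lem:ex_theta_stop}, approximate it by continuous policies, amplify so that $\int_0^t\theta_n^2\,ds$ blows up after $\tau$ and vanishes before $\tau$, and prove the converse inequality by conditioning out $Z$ and noting that each level set $\tau_z$ is a genuine stopping time. The interesting difference is in how the approximation--amplification step is executed. The paper invokes \cite[Theorem 1]{Wis94} to get continuous $\tilde\theta_n\in[0,1]$ with $\tilde\theta_n(\hat{\mathbb{X}}|_{[0,t]})\to\mathbbm{1}_{\{\tau\le t\}}$ for $\lambda\otimes\P$-a.e.\ $(t,\omega)$ and then amplifies via $\theta_n=(2\tilde\theta_n)^n$, concluding from the pointwise divergence/vanishing of the integrand; this passage from a.e.\ pointwise convergence of an unbounded integrand to convergence of the integrals $\int_0^t\theta_n^2\,ds$ is left implicit. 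You instead build the continuous approximants by hand (Lusin on the occupation measure $\nu$, plus Tietze extension --- you wrote ``Urysohn,'' but extension of a continuous function from a closed set is Tietze), and crucially you choose a quantitative rate $\nu(K_n^c)\le n^{-4}$ so that, after the amplification $\theta_n=n\psi_n$, Markov plus Borel--Cantelli gives $n^2\,\mathrm{Leb}(B_n)\to0$ a.s.\ and hence a \emph{uniform-in-$t$} sandwich on $\int_0^t\theta_n^2\,ds$. This makes the step the paper treats briskly fully airtight and self-contained, at the cost of a slightly longer argument. One cosmetic remark: on the event $\{\tau\ge T\}$ your $\tau^r_{\theta_n}$ may equal $+\infty$ rather than converge to $\tau$ (the paper's construction has the same boundary feature); since only $\tau^r_{\theta_n}\wedge T$ enters, this does not affect the conclusion, but the literal statement ``$\tau^r_{\theta_n}\to\tau$ a.s.'' should be read modulo this identification.
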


\begin{proof}
 Let $\tau$ be a stopping time. From Lemma \ref{lem:ex_theta_stop}, we know that there is a measurable map $\theta \colon \Lambda_T \to \{0,1\}$ such that
 \begin{align*}
  \theta(\hat{\mathbb{X}}|_{[0,t]}) = \mathbbm{1}_{\{\tau \leq t\}}.
 \end{align*}
Using \cite[Theorem 1]{Wis94}, we can find a sequence of continuous functions $\tilde{\theta}_n \in \mathcal{T}$ such that $\tilde{\theta}_n(\hat{\mathbb{X}}|_{[0,t]}) \to \mathbbm{1}_{\{\tau \leq t\}}$ almost surely w.r.t. $\lambda|_{[0,T]} \otimes \P$ where $\lambda|_{[0,T]}$ denotes the Lebesgue measure on $[0,T]$. W.l.o.g, we may assume that $0 \leq \tilde{\theta}_n \leq 1$. Set $\theta_n \coloneqq  (2\tilde{\theta}_n)^n$. Then
 \begin{align*}
  \lim_{n \to \infty} \theta_n(\hat{\mathbb{X}}|_{[0,t]}) \to \begin{cases}
                + \infty &\text{if } t \geq \tau \\
                0 &\text{if } t < \tau.
               \end{cases}
 \end{align*}
  It follows that $\tau_{\theta_n}^r \to \tau$ almost surely as $n \to \infty$. Using the dominated convergence theorem, this implies that
 \begin{align*}
  \sup_{\theta \in \mathcal{T}} \E[{Y}_{\tau^r_{\theta} \wedge T}] \geq \sup_{\tau \in \mathcal{S}} \E[Y_{\tau \wedge T}].
 \end{align*}
   To show the converse inequality, take $\theta \in \mathcal{T}$. From independence,
  \begin{align*}
   \E[ {Y}_{\tau_{\theta}^r \wedge T}\, |\, \hat{\mathbb{X}}] = \int_0^{\infty} {Y}_{\tau_z \wedge T}\, \P_Z(dz)
  \end{align*}
  where
  \begin{align*}
   \tau_z \coloneqq \inf \left\{t \geq 0\,:\, \int_0^{t \wedge T} \theta(\hat{\mathbb{X}}|_{[0,s]})^2\, ds \geq z \right\}.
  \end{align*}
  Note that this is a stopping time for every $z \geq 0$. Taking expectation, it follows that
  \begin{align*}
   \E[ Y_{\tau_{\theta}^r \wedge T}] = \int_0^{\infty} \E[{Y}_{\tau_z \wedge T}] \, \P_Z(dz) \leq \sup_{\tau \in \mathcal{S}} \E[{Y}_{\tau \wedge T}]
  \end{align*}
  which implies the claim.
\end{proof}

Note that we cannot generally assume that $\theta_n \to \theta$ implies $\tau_{\theta_n}^r \to \tau_{\theta}^r$, as is shown by the following counter-example. So even randomized stopping times are not continuous w.r.t.~the underlying stopping policies.
\begin{example}
Consider $\vartheta, \vartheta_n \colon [0,3] \to [0,\infty)$ defined by
\begin{align*}
 \vartheta(t) = \begin{cases}
                 1- t &\text{ if } t \in [0,1] \\
                 0    &\text{ if } t \in [1,2] \\
                 t - 2 &\text{ if } t \in [2,3]
                \end{cases}
                \quad \text{and} \quad
 \vartheta_n(t) = \begin{cases}
                 (1 - \frac{1}{n}) (1- t) &\text{ if } t \in [0,1] \\
                 0    &\text{ if } t \in [1,2] \\
                 t - 2 &\text{ if } t \in [2,3].
                \end{cases}
\end{align*}
Although $\vartheta_n \to \vartheta$ as $n \to \infty$, we have
\begin{align*}
 \inf \left\{t \geq 0\,:\, \int_0^{t \wedge 3} \vartheta(s)\, ds \geq \frac{1}{2} \right\} = 1 \quad \text{and} \quad \inf \left\{t \geq 0\,:\, \int_0^{t \wedge 3} \vartheta_n(s)\, ds \geq \frac{1}{2} \right\} > 2
\end{align*}
for all $n  \geq 1$.
\end{example}

As mentioned above, randomized stopping times \emph{regularize} the optimal stopping problem. Indeed, given a randomized stopping time $\tau^r_\theta$ defined in terms of an independent random variable $Z$ as in Definition~\ref{def:rand-stopping-times}, if we integrate the stopped process $Y_{\tau^r_\theta \wedge T}$ w.r.t.~$Z$, we obtain a smooth function of $\theta$ -- which is clearly not true without regularization, see Remark~\ref{rem:non-randomized-representation} below.

\begin{proposition}\label{lem:regularized_value}
 Let $S$ be an $(\mathcal{F}_t)$-stopping time and let $F_Z$ denote the cumulative distribution function of $Z$. Then
 \begin{align*}
  \E[{Y}_{\tau_{\theta}^r \wedge S} \, |\, \hat{\mathbb{X}}] = \int_0^S{Y}_{t}\, d \tilde{F}(t) + {Y}_S (1 - \tilde{F}(S)) = \int_0^S (1 - \tilde{F}(t))\, d{Y}_t + {Y}_0
 \end{align*}
 where the second integral is implicitly defined by integration by parts and
 \begin{align*}
  \tilde{F}(t) \coloneqq F_Z \left( \int_0^{t} \theta (\hat{\mathbb{X}}|_{[0,s]})^2 \, ds \right).
 \end{align*}
 In particular, if $Z$ has a density $\varrho$,
 \begin{align*}
  \E[{Y}_{\tau_{\theta}^r \wedge S}] = \E \left[  \int_0^S {Y}_{t} \theta (\hat{\mathbb{X}}|_{[0,t]})^2 \varrho \left( \int_0^{t} \theta (\hat{\mathbb{X}}|_{[0,s]})^2 \, ds \right)\, dt + {Y}_S (1 - \tilde{F}(S)) \right].
 \end{align*}
\end{proposition}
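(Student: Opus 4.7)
The plan is to compute the conditional law of $\tau^r_\theta$ given $\hat{\mathbb{X}}$ explicitly, and then integrate against it to obtain all three identities. Write $A(t) \coloneqq \int_0^t \theta(\hat{\mathbb{X}}|_{[0,s]})^2 \, ds$, so that $A$ is continuous, non-decreasing, $\sigma(\hat{\mathbb{X}})$-measurable, and $\tau^r_\theta = \inf\{ t \ge 0 : A(t \wedge T) \ge Z\}$. Since $Z$ is independent of $\hat{\mathbb{X}}$ and $A$ is continuous, one reads off that for every $t \in [0,T]$,
\begin{equation*}
\P\bigl( \tau^r_\theta \le t \,\big|\, \hat{\mathbb{X}} \bigr) \;=\; \P\bigl( Z \le A(t) \,\big|\, \hat{\mathbb{X}} \bigr) \;=\; F_Z(A(t)) \;=\; \tilde{F}(t),
\end{equation*}
and the remaining conditional mass $1 - \tilde{F}(T)$ is carried by $\{\tau^r_\theta = +\infty\}$ (i.e.\ by $\{Z > A(T)\}$).

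Next, since $Y$ is adapted and $S$ is an $(\mathcal{F}_t)$-stopping time with $S \le T$, both $Y$ and $S$ are $\sigma(\hat{\mathbb{X}})$-measurable and hence deterministic conditional on $\hat{\mathbb{X}}$. Decomposing $Y_{\tau^r_\theta \wedge S} = Y_{\tau^r_\theta} \mathbbm{1}_{\{\tau^r_\theta \le S\}} + Y_S \mathbbm{1}_{\{\tau^r_\theta > S\}}$ and integrating against the conditional law from the previous step produces the first identity
\begin{equation*}
\E\bigl[ Y_{\tau^r_\theta \wedge S} \,\big|\, \hat{\mathbb{X}} \bigr] \;=\; \int_0^S Y_t \, d\tilde{F}(t) \;+\; Y_S \bigl( 1 - \tilde{F}(S) \bigr),
\end{equation*}
where the mass $1 - \tilde{F}(S)$ accumulates contributions from $\{S < \tau^r_\theta < \infty\}$ and from the atom at $+\infty$. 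The Lebesgue--Stieltjes integral is unambiguous because $\tilde{F}$ is continuous of bounded variation and $Y$ is continuous; continuity of $\tilde{F}$ also assigns no mass to $\{t = S\}$, so the split at $\le S$ versus $> S$ is harmless.

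For the second equality, observe that $\tilde{F}(0) = F_Z(0) = \P(Z = 0) = 0$, and apply classical integration by parts to obtain $\int_0^S Y_t \, d\tilde{F}(t) = Y_S \tilde{F}(S) - \int_0^S \tilde{F}(t) \, dY_t$, the latter integral being \emph{defined} by this relation (in line with the statement of the proposition). Substituting into the first identity and rearranging yields $Y_S - \int_0^S \tilde{F}(t) \, dY_t = Y_0 + \int_0^S (1 - \tilde{F}(t)) \, dY_t$. Finally, when $Z$ has density $\varrho$, the chain rule applied to the absolutely continuous composition $\tilde{F}(t) = F_Z(A(t))$ gives $d\tilde{F}(t) = \theta(\hat{\mathbb{X}}|_{[0,t]})^2 \, \varrho(A(t)) \, dt$; plugging this into the first identity and applying the tower property delivers the final formula. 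The only genuine care required is the bookkeeping of the atom at $+\infty$ of the conditional law of $\tau^r_\theta$; once it is absorbed into the $Y_S (1 - \tilde{F}(S))$ term, the remainder is pure independence plus standard Stieltjes calculus.
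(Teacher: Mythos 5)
Your proposal is correct and follows essentially the same route as the paper: compute the conditional law of $\tau^r_\theta$ given $\hat{\mathbb{X}}$ via $\P(\tau^r_\theta \le t \mid \hat{\mathbb{X}}) = F_Z(A(t\wedge T))$ with the residual atom at $+\infty$, then integrate $Y_{\cdot \wedge S}$ against it. Your additional remarks on the integration by parts and the density case simply spell out steps the paper leaves implicit.
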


\begin{proof}
 Recall that $\tau_{\theta}^r \in [0,T] \cup \{\infty\}$. For $t \in [0,\infty)$, we have
 \begin{align*}
  \P(\tau_{\theta}^r \leq t\, |\, \hat{\mathbb{X}}) = \P \left( \int_0^{t \wedge T} \theta (\hat{\mathbb{X}}|_{[0,s]})^2 \, ds \geq Z\, |\, \hat{\mathbb{X}} \right) = F_Z \left( \int_0^{t \wedge T} \theta (\hat{\mathbb{X}}|_{[0,s]})^2 \, ds \right) = \tilde{F}(t)
 \end{align*}
 and
 \begin{align*}
  \P(\tau_{\theta}^r = \infty \, |\, \hat{\mathbb{X}}) =  \P \left( \int_0^T  \theta (\hat{\mathbb{X}}|_{[0,s]})^2 \, ds < Z\, |\, \hat{\mathbb{X}} \right) = 1 - \tilde{F}(T).
 \end{align*}
 It follows that for $f \colon [0,\infty] \to \R$ integrable,
 \begin{align*}
  \E[f(\tau_{\theta}^r)\, |\, \hat{\mathbb{X}}] = \int_0^T f(t)\, d\tilde{F}(t) + f(\infty)(1 - \tilde{F}(T))
 \end{align*}
 and therefore
 \begin{align*}
  \E[{Y}_{\tau_{\theta}^r \wedge S} \, |\, \hat{\mathbb{X}}] = \int_0^T {Y}_{t \wedge S}\, d\tilde{F}(t) + {Y}_S (1 - \tilde{F}(T)) = \int_0^S {Y}_{t}\, d\tilde{F}(t) + {Y}_S (1 - \tilde{F}(S)).
 \end{align*}
\end{proof}

\begin{remark}\label{rem:non-randomized-representation}
 In the deterministic case $Z = z > 0$ almost surely, we have
 \begin{align*}
  \tilde{F}(t) = \mathbbm{1}_{[z,\infty)} \left(\int_0^t \theta (\hat{\mathbb{X}}|_{[0,s]})^2 \, ds \right),
 \end{align*}
 thus
 \begin{align*}
  \E[{Y}_{\tau_{\theta}^r \wedge T}] = \E \left[ \int_0^T \mathbbm{1}_{[0,z)} \left(\int_0^t \theta (\hat{\mathbb{X}}|_{[0,s]})^2 \, ds \right) \, d{Y}_t \right] + \E[{Y}_0].
 \end{align*}
\end{remark}

\section{Linear signature stopping policies}\label{sec:lin_stopping_rues}

Using the regularization by randomization we will prove that it is enough to use stopping policies that are linear functionals of the signature.

\begin{definition} The space of \emph{linear signature stopping policies} $\mathcal{T}_{\mathrm{sig}} \subset \mathcal{T}$ is defined as
 \begin{align*}
  \mathcal{T}_{\mathrm{sig}} = \left\{ \theta \in \mathcal{T}\, :\, \exists l \in T((\R^{1+d})^*) \text{ such that } \theta(\hat{\mathbb{X}}|_{[0,t]}) = \langle l,\hat{\mathbb{X}}^{< \infty}_{0,t} \rangle \ \forall \hat{\mathbb{X}}|_{[0,t]} \in \Lambda_T \right\}.
 \end{align*}
Note that every $l \in T((\R^{1+d})^*)$ defines a $\theta_l \in \mathcal{T}$ by setting $\theta_l(\hat{\mathbb{X}}|_{[0,t]}) \coloneqq \langle l,\hat{\mathbb{X}}^{< \infty}_{0,t} \rangle$.
Let $Z$ be as in Definition~\ref{def:rand-stopping-times}, then we introduce the following notation for randomized stopping times associated to linear signature stopping policies 
    \begin{align}\label{eqn:randomized_sig_stopping_time}
      \tau^r_{l} \coloneqq \tau^r_{\theta_l} = \inf \left\{t \geq 0\,:\, \int_0^{t \wedge T} \langle l, \hat{\mathbb{X}}^{<\infty}_{0,s}\rangle^2 \, ds \geq Z \right\}.
    \end{align}
\end{definition}

As a consequence of the Stone-Weierstrass theorem, $\mathcal{T}_{\mathrm{sig}}$ is dense in $\mathcal{T}$. More precisely, we have
\begin{lemma}\label{lemma:sig_dense}
  Let $\P$ be a probability measure on $(\hat{\Omega}_T^p, \mathcal{B}(\hat{\Omega}_T^p))$. Then, for every $\varepsilon > 0$, there is a compact set $\mathcal{K} \subset \hat{\Omega}_T^p$ such that
  \begin{enumerate}
   \item $\P(\mathcal{K}) > 1 - \varepsilon$,
   \item $\mathcal{T}_{\mathrm{sig}}$, restricted of $\mathcal{K}$, is dense in $\mathcal{T}$. More precisely, for every $\theta \in \mathcal{T}$ there is a sequence $\theta_n \in \mathcal{T}_{\mathrm{sig}}$ such that
    \begin{align*}
  \sup_{\hat{\mathbb{X}} \in \mathcal{K};\ t \in [0,T]} |\theta_n(\hat{\mathbb{X}}|_{[0,t]}) - \theta(\hat{\mathbb{X}}|_{[0,t]}) | \to 0
 \end{align*}
 as $n \to \infty$.
  \end{enumerate}
 \end{lemma}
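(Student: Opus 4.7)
The plan is to reduce the statement to a straightforward application of the Stone--Weierstrass theorem on a suitable compact subset of the space $\Lambda_T$ of stopped rough paths.

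First, since $\hat{\Omega}_T^p$ is Polish, every Borel probability measure on it is tight. Hence, given $\varepsilon > 0$, I would choose a compact set $\mathcal{K} \subset \hat{\Omega}_T^p$ with $\P(\mathcal{K}) > 1 - \varepsilon$. The next step is to pass from the compact set $\mathcal{K}$ (which lives in $\hat{\Omega}_T^p$) to a compact set in $\Lambda_T$. I would define
\begin{equation*}
  \tilde{\mathcal{K}} := \bigl\{ \hat{\mathbb{X}}|_{[0,t]} \, : \, \hat{\mathbb{X}} \in \mathcal{K},\ t \in [0,T]\bigr\} \subset \Lambda_T,
\end{equation*}
and verify that $\tilde{\mathcal{K}}$ is compact. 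This is an image of the compact set $\mathcal{K} \times [0,T]$ under the map $(\hat{\mathbb{X}},t) \mapsto \hat{\mathbb{X}}|_{[0,t]}$, which is continuous w.r.t.~the metric on $\Lambda_T$ of Definition~\ref{defn:stopped_rp} (I would defer the simple continuity check to the appendix referenced in the paper).

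Next comes the algebraic/analytic heart of the argument: I would apply Stone--Weierstrass to the collection $\mathcal{A} := \{ \theta_l|_{\tilde{\mathcal{K}}} \, : \, l \in T((\R^{1+d})^*) \} \subset C(\tilde{\mathcal{K}},\R)$. Four properties need to be verified.
\begin{enumerate}
\item \emph{Continuity on $\Lambda_T$.} For fixed $l$ supported on words of degree at most $N$, $\theta_l(\hat{\mathbb{X}}|_{[0,t]}) = \langle l, \hat{\mathbb{X}}^{<\infty}_{0,t} \rangle$ depends continuously on $\hat{\mathbb{X}}|_{[0,t]}$ because the truncated signature map $\hat{\Omega}^p_t \to G^N(\R^{1+d})$ is continuous in $p$-variation (by Lyons' extension theorem), and $t \mapsto \hat{\mathbb{X}}^{<\infty}_{0,t}$ combined with the extension procedure of Section~\ref{sec:space-stopped-rough} is continuous w.r.t.~$t$ as well.
\item \emph{Subalgebra.} Closedness under $+$ and scalar multiplication is immediate from linearity in $l$, while closedness under multiplication follows from the shuffle identity \eqref{eqn:lin_sig}: $\theta_{l_1}\cdot \theta_{l_2} = \theta_{l_1 \shuffle l_2}$ since $\hat{\mathbb{X}}^{<\infty}_{0,t} \in G(\R^{1+d})$.
\item \emph{Constants.} Taking $l = \lambda \varnothing$ (the empty word) yields $\theta_l \equiv \lambda$.
\item \emph{Point separation.} This is the main obstacle. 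Suppose $\hat{\mathbb{X}}|_{[0,t]}, \hat{\mathbb{Y}}|_{[0,s]} \in \tilde{\mathcal{K}}$ satisfy $\theta_l(\hat{\mathbb{X}}|_{[0,t]}) = \theta_l(\hat{\mathbb{Y}}|_{[0,s]})$ for every $l$. Evaluating at $l = \blue{1}$ recovers $t = s$ from the time component (cf.~Remark~\ref{rem:path-letters}). With $t = s$ fixed, equality of all signature coordinates means $\hat{\mathbb{X}}^{<\infty}_{0,t} = \hat{\mathbb{Y}}^{<\infty}_{0,t}$, and I would invoke the uniqueness theorem for signatures of geometric rough paths (Hambly--Lyons, extended to geometric rough paths by Boedihardjo--Geng--Lyons--Qian): the signature determines the underlying rough path up to tree-like equivalence, and the presence of the strictly monotone time coordinate rules out non-trivial tree-like excursions. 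Thus $\hat{\mathbb{X}}|_{[0,t]} = \hat{\mathbb{Y}}|_{[0,t]}$.
\end{enumerate}

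With these four properties, Stone--Weierstrass gives that $\mathcal{A}$ is dense in $C(\tilde{\mathcal{K}},\R)$. For arbitrary $\theta \in \mathcal{T} = C(\Lambda_T,\R)$, its restriction to $\tilde{\mathcal{K}}$ can therefore be uniformly approximated by some sequence $\theta_n = \theta_{l_n} \in \mathcal{T}_{\mathrm{sig}}$, which is precisely the desired conclusion
\begin{equation*}
  \sup_{\hat{\mathbb{X}} \in \mathcal{K},\, t \in [0,T]} \bigl|\theta_n(\hat{\mathbb{X}}|_{[0,t]}) - \theta(\hat{\mathbb{X}}|_{[0,t]})\bigr| \xrightarrow{n\to\infty} 0.
\end{equation*}
The main technical hurdle I anticipate is the point-separation step, specifically justifying that the time-extension of $X$ forces tree-reducedness so that the rough-path uniqueness theorem applies; everything else reduces to continuity properties of the signature map already built into the rough-path framework.
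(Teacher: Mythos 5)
Your proposal is correct and follows essentially the same route as the paper, which itself simply defers to \cite[Lemma B.3]{KLP20}: tightness of $\P$ on the Polish space $\hat{\Omega}_T^p$, compactness of the image of $\mathcal{K}\times[0,T]$ in $\Lambda_T$ under the (continuous) restriction map, and Stone--Weierstrass applied to the shuffle algebra of linear signature functionals, with point separation supplied by the time coordinate together with the uniqueness-of-signature theorem. The only quibble is attributional: the uniqueness result for geometric rough paths you want is due to Boedihardjo--Geng--Lyons--Yang rather than Qian.
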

 \begin{proof}
    \cite[Lemma B.3]{KLP20}.
 \end{proof}

The main result of this section is the following
\begin{proposition}\label{prop:opt_sig_pol}
 Assume that $Z$ has a continuous density $\varrho$ and that $\E[ \lVert Y \rVert_{\infty}] < \infty$. Then
 \begin{align*}
  \sup_{\theta \in \mathcal{T}} \E[{Y}_{\tau_{\theta}^r \wedge T}] = \sup_{\theta \in \mathcal{T}_{\mathrm{sig}}} \E[{Y}_{\tau_{\theta}^r \wedge T}].
 \end{align*}
 It follows that
 \begin{align*}
  \sup_{\theta \in \mathcal{T}} \E[{Y}_{\tau_{\theta}^r \wedge T}] = \sup_{l \in T((\R^{1 + d})^*)} \E[{Y}_{\tau_l^r \wedge T}].
 \end{align*}

\end{proposition}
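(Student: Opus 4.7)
The easy direction $\sup_{\theta \in \mathcal{T}} \E[Y_{\tau^r_\theta \wedge T}] \ge \sup_{\theta \in \mathcal{T}_{\mathrm{sig}}} \E[Y_{\tau^r_\theta \wedge T}]$ is immediate from the inclusion $\mathcal{T}_{\mathrm{sig}} \subset \mathcal{T}$. For the reverse inequality, the plan is to fix an arbitrary $\theta \in \mathcal{T}$ and construct a sequence $\theta_n \in \mathcal{T}_{\mathrm{sig}}$ such that $\E[Y_{\tau^r_{\theta_n} \wedge T}] \to \E[Y_{\tau^r_\theta \wedge T}]$. The central idea is that Proposition~\ref{lem:regularized_value} provides a representation of $\E[Y_{\tau^r_\theta \wedge T} \mid \hat{\mathbb{X}}]$ which is \emph{pathwise continuous} in $\theta$ (in a suitable sense) precisely because $Z$ has a continuous density $\varrho$ -- this is the smoothing effect of randomization, which allows us to exploit the density result of Lemma~\ref{lemma:sig_dense}.

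Concretely, fix $\varepsilon > 0$. By absolute continuity of the integral applied to $\|Y\|_\infty \in L^1$, choose $\delta > 0$ such that $\P(A) < \delta$ implies $\E[\|Y\|_\infty \mathbbm{1}_A] < \varepsilon$. Lemma~\ref{lemma:sig_dense} then yields a compact set $\mathcal{K} \subset \hat{\Omega}_T^p$ with $\P(\mathcal{K}^c) < \delta$ and a sequence $(\theta_n) \subset \mathcal{T}_{\mathrm{sig}}$ with $\theta_n \to \theta$ uniformly on $\{\hat{\mathbb{X}}|_{[0,t]} : \hat{\mathbb{X}} \in \mathcal{K},\, t \in [0,T]\}$. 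Using the density version of Proposition~\ref{lem:regularized_value}, I would write
\[
\E[Y_{\tau^r_{\theta_n} \wedge T} \mid \hat{\mathbb{X}}] = \int_0^T Y_t\, \theta_n(\hat{\mathbb{X}}|_{[0,t]})^2\, \varrho\!\bigl(A_n(t)\bigr)\, dt + Y_T(1 - \tilde{F}_n(T)),
\]
with $A_n(t) = \int_0^t \theta_n(\hat{\mathbb{X}}|_{[0,s]})^2\, ds$ and $\tilde{F}_n(T) = F_Z(A_n(T))$, and similarly for $\theta$. On $\mathcal{K}$, continuity of $\theta$ on a compact set makes $\theta$ (and eventually $\theta_n$) uniformly bounded, so the $A_n(t)$ range over a common compact subset of $[0,\infty)$ on which $\varrho$ is uniformly bounded and uniformly continuous. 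Uniform convergence $\theta_n \to \theta$ then forces uniform convergence of the integrand in $t$, so for each $\hat{\mathbb{X}} \in \mathcal{K}$ the conditional expectations converge. Since these conditional expectations are globally dominated by $\|Y\|_\infty \in L^1$, dominated convergence gives
\[
\E\!\left[\bigl|\E[Y_{\tau^r_{\theta_n} \wedge T}\mid\hat{\mathbb{X}}] - \E[Y_{\tau^r_\theta \wedge T}\mid\hat{\mathbb{X}}]\bigr|\,\mathbbm{1}_{\mathcal{K}}\right] \longrightarrow 0.
\]
On $\mathcal{K}^c$, the conditional expectations are bounded by $\|Y\|_\infty$, so the complementary contribution is at most $2\E[\|Y\|_\infty \mathbbm{1}_{\mathcal{K}^c}] < 2\varepsilon$. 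Taking $\limsup$ in $n$ and then letting $\varepsilon \downarrow 0$ yields the first claimed equality. The second statement is just a reparameterization: by definition of $\mathcal{T}_{\mathrm{sig}}$ every element arises as some $\theta_l$ with $l \in T((\R^{1+d})^*)$, and $\tau^r_l = \tau^r_{\theta_l}$.

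The principal obstacle I anticipate is the passage to the limit inside the integrand $\theta_n(\hat{\mathbb{X}}|_{[0,t]})^2\, \varrho(A_n(t))$: uniform approximation of $\theta$ alone is not enough, one needs that the accumulated quantity $A_n(t)$ stays in a region where $\varrho$ is well-behaved. This is exactly why the approximation via Lemma~\ref{lemma:sig_dense} is restricted to a compact $\mathcal{K}$ (yielding uniform bounds on $\theta$ and hence on $A_n$), and why the continuity assumption on $\varrho$ is needed rather than mere integrability. All other steps are routine applications of dominated convergence paired with the uniform bound provided by $\E[\|Y\|_\infty] < \infty$.
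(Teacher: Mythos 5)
Your proposal is correct and follows essentially the same route as the paper's proof: both reduce to the nontrivial inequality, invoke Lemma~\ref{lemma:sig_dense} to get a compact set $\mathcal{K}$ of large measure with uniform approximation by $\mathcal{T}_{\mathrm{sig}}$, pass the convergence through the density representation of Proposition~\ref{lem:regularized_value} using uniform continuity of $\varrho$ (resp.\ $F_Z$) on the compact range of the accumulated intensities, and control the contribution of $\mathcal{K}^c$ by $2\E[\|Y\|_\infty\mathbbm{1}_{\mathcal{K}^c}]$. Your explicit use of absolute continuity of the integral to choose $\delta$ before invoking the lemma is, if anything, slightly more careful than the paper's phrasing of the same step.
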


\begin{proof}
 It is enough to show that $\sup_{\theta \in \mathcal{T}} \E[{Y}_{\tau^r_{\theta} \wedge T}] \leq \sup_{\theta \in \mathcal{T}_{\mathrm{sig}}} \E[{Y}_{\tau^r_{\theta} \wedge T}]$. Let $\theta \in \mathcal{T}$. From Lemma \ref{lemma:sig_dense}, we know that for every $\varepsilon > 0$, there is a compact set $\mathcal{K} \subset \hat{\Omega}_T^p$ such that for $A \coloneqq \{\hat{\mathbb{X}} \in \mathcal{K} \}$, we have $\P(A) \geq 1 - \varepsilon$, and a sequence $\theta_n \in \mathcal{T}_{\mathrm{sig}}$ such that
 \begin{align}\label{eqn:prop_sigts}
  \lim_{n \to \infty} \sup_{\mathbb{X} \in \mathcal{K}; t \in [0,T]} |\theta_n(\mathbb{X}|_{[0,t]}) - \theta(\mathbb{X}|_{[0,t]})| = 0.
 \end{align}
 Let $\mathcal{K}$ be such a compact set, the precise choice will be made later. Set
 \begin{align*}
  \tilde{F}_n(t) = F_Z \left( \int_0^{t} \theta_n (\hat{\mathbb{X}}|_{[0,s]})^2 \, ds \right) \quad \text{and} \quad \tilde{F}(t) = F_Z \left( \int_0^{t}  \theta (\hat{\mathbb{X}}|_{[0,s]})^2 \, ds \right).
 \end{align*}
 Then,
 \begin{align*}
    &|\E[{Y}_T (1 - \tilde{F}_n(T))\, ;\, A] - \E[{Y}_T (1 - \tilde{F}(T))\, ;\, A]| \\
    \leq\ &\E[|{Y}_T| |\tilde{F}_n(T) - \tilde{F}(T)|\,;\, A] \\
    \leq\ &\E[|{Y}_T|] \sup_{\mathbb{X} \in \mathcal{K}} \left| F_Z \left( \int_0^{T} \theta_n (\mathbb{X}|_{[0,s]})^2 \, ds \right) - F_Z \left( \int_0^{T}  \theta (\mathbb{X}|_{[0,s]})^2 \, ds \right) \right|.
 \end{align*}
 Since $F_Z$ is continuous and uniformly continuous on compact sets,
  \begin{align*}
   \sup_{\mathbb{X} \in \mathcal{K}} \left| F_Z \left( \int_0^{T}  \theta_n (\mathbb{X}|_{[0,s]})^2 \, ds \right) - F_Z \left( \int_0^{T}  \theta (\mathbb{X}|_{[0,s]})^2 \, ds \right) \right| \to 0
  \end{align*}
  as $n \to \infty$. Indeed: we first show that
  \begin{align}\label{eqn:P_unif}
   \sup_{\mathbb{X} \in \mathcal{K}; t \in [0,T]} | \theta_n (\mathbb{X}|_{[0,t]})^2 - \theta (\mathbb{X}|_{[0,t]})^2| \to 0
  \end{align}
  as $n \to \infty$. Since $\sup_{n \geq 1} \sup_{\mathbb{X} \in \mathcal{K}; t \in [0,T]} |\theta_n(\mathbb{X}|_{[0,t]})| < \infty$, the functions $\theta$ and $\theta_n$ take their values in a compact set, hence \eqref{eqn:P_unif} follows from \eqref{eqn:prop_sigts}. Property \eqref{eqn:prop_sigts} also implies that
  \begin{align*}
   \sup_{\mathbb{X} \in \mathcal{K}} \left| \int_0^{T} \theta_n (\mathbb{X}|_{[0,s]})^2 \, ds - \int_0^{T} \theta (\mathbb{X}|_{[0,s]})^2 \, ds \right| \to 0
  \end{align*}
  as $n \to \infty$. Using continuity of $F_Z$ and uniform continuity on compact sets implies the claim. It follows that
  \begin{align*}
   \lim_{n \to \infty} |\E[{Y}_T (1 - \tilde{F}_n(T))\, ;\, A] - \E[{Y}_T (1 - \tilde{F}(T))\, ;\, A]| = 0.
  \end{align*}
  Since $|\tilde{F}_n(T) - \tilde{F}(T)| \leq 2$,
  \begin{align*}
   |\E[{Y}_T (1 - \tilde{F}_n(T))\, ;\, A^c] - \E[{Y}_T (1 - \tilde{F}(T))\, ;\, A^c]| \leq 2 \E[|{Y}_T|\,;\, A^c]
  \end{align*}
  and this quantity can me made arbitrarily small by the choice of $\mathcal{K}$.

  With the same arguments, we can show that
  \begin{align*}
   &\left| \E \left[  \int_0^T {Y}_{t} \left( \theta_n (\hat{\mathbb{X}}|_{[0,t]})^2 \varrho \left( \int_0^{t}  \theta_n (\hat{\mathbb{X}}|_{[0,s]})^2 \, ds \right) -  \theta (\hat{\mathbb{X}}|_{[0,t]})^2 \varrho \left( \int_0^{t}  \theta (\hat{\mathbb{X}}|_{[0,s]})^2 \, ds \right) \right)\, dt \right] \right| \\
   &\quad \to 0
  \end{align*}
  as $n \to \infty$ which implies the claim.
\end{proof}

Finally, we note that we do not need randomization for the approximation by stopping times based on signature stopping policies to work.

\begin{definition}
For $l \in T((\R^{1 + d})^*)$ define the hitting time of the signature against the half-plane orthogonal to $l$ by
  \begin{align*}
   \tau_l \coloneqq \inf\left\{t \in [0,T]\, :\, \langle l, {\hat{\mathbb{X}}}_{0,t}^{< \infty} \rangle \geq 1 \right\} .
  \end{align*}
\end{definition}

\begin{proposition}\label{thm:signature-stopping}
 Given $\E[\| Y \|_{\infty}] < \infty$, we have
 \begin{align*}
  \sup_{l \in T((\R^{1 + d})^*) } \E[Y_{\tau_l \wedge T}] = \sup_{\tau \in \mathcal{S}} \E[Y_{\tau \wedge T}].
 \end{align*}
\end{proposition}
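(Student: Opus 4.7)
The plan is to combine the randomized-stopping results of Propositions~\ref{prop:opt_cont_pol} and~\ref{prop:opt_sig_pol} with an algebraic de-randomization step that turns a randomized linear signature stopping time into an ordinary signature hitting time $\tau_{l'}$. One direction, namely $\sup_l \E[Y_{\tau_l \wedge T}] \leq \sup_{\tau \in \mathcal{S}} \E[Y_{\tau \wedge T}]$, is immediate because each $\tau_l$ is an $(\mathcal{F}_t)$-stopping time, so the substance of the argument is the reverse inequality.

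For the converse I would pick an independent random variable $Z$ with a continuous density, e.g.\ $Z \sim \mathrm{Exp}(1)$. Propositions~\ref{prop:opt_cont_pol} and~\ref{prop:opt_sig_pol} then give
\begin{equation*}
 \sup_{\tau \in \mathcal{S}} \E[Y_{\tau \wedge T}] = \sup_{l \in T((\R^{1+d})^*)} \E[Y_{\tau^r_l \wedge T}],
\end{equation*}
so it is enough to bound every $\E[Y_{\tau^r_l \wedge T}]$ by $\sup_{l'} \E[Y_{\tau_{l'} \wedge T}]$.

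The key algebraic step is to rewrite the occupation functional driving $\tau^r_l$ as a single linear functional of the signature at time $t$. By the shuffle identity~\eqref{eqn:lin_sig} one has $\langle l, \hat{\mathbb{X}}^{<\infty}_{0,s}\rangle^2 = \langle l \shuffle l, \hat{\mathbb{X}}^{<\infty}_{0,s}\rangle$, and, since the running-time coordinate of $\hat{X}$ corresponds to the letter $\blue{1}$ (Remark~\ref{rem:path-letters}), the signature integration rule $\int_0^t \langle w, \hat{\mathbb{X}}^{<\infty}_{0,s}\rangle\, d\hat{X}^{j}_s = \langle w\blue{j}, \hat{\mathbb{X}}^{<\infty}_{0,t}\rangle$ yields
\begin{equation*}
 \int_0^t \langle l, \hat{\mathbb{X}}^{<\infty}_{0,s}\rangle^2\, ds = \langle (l \shuffle l)\blue{1}, \hat{\mathbb{X}}^{<\infty}_{0,t}\rangle.
\end{equation*}
Conditioning on $Z = z > 0$ and setting $l_z \coloneqq z^{-1}(l \shuffle l)\blue{1} \in T((\R^{1+d})^*)$, this identity shows that $\tau^r_l$ almost surely coincides with the deterministic signature hitting time $\tau_{l_z}$.

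Finally, independence of $Z$ from $\hat{\mathbb{X}}$ and Fubini give
\begin{equation*}
 \E[Y_{\tau^r_l \wedge T}] = \int_0^\infty \E[Y_{\tau_{l_z} \wedge T}]\, \P_Z(dz) \leq \sup_{l' \in T((\R^{1+d})^*)} \E[Y_{\tau_{l'} \wedge T}],
\end{equation*}
and a supremum over $l$ closes the chain of inequalities. The only delicate point is justifying the signature integration identity at the level of the geometric rough path $\hat{\mathbb{X}}$; because the first component of $\hat{X}$ is of bounded variation, integration against $ds$ is a classical Young/Stieltjes integral, and the identity transfers from the defining piecewise smooth approximants of $\hat{\mathbb{X}} \in \hat{\Omega}_T^p$ by $p$-variation continuity of the rough integral.
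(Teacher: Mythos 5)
Your proposal is correct and follows essentially the same route as the paper: reduce via Propositions~\ref{prop:opt_cont_pol} and~\ref{prop:opt_sig_pol} to bounding $\E[Y_{\tau^r_l\wedge T}]$, condition on $\hat{\mathbb{X}}$ (equivalently on $Z=z$), and recognize the level set $\{\int_0^{t}\langle l,\hat{\mathbb{X}}^{<\infty}_{0,s}\rangle^2\,ds\ge z\}$ as the signature hitting time associated with $(l\shuffle l)\blue{1}/z$ before integrating against $\P_Z$. Your added remarks — that the easy inequality holds because each $\tau_l$ is a genuine stopping time, and that the integration identity $\int_0^t\langle l,\hat{\mathbb{X}}^{<\infty}_{0,s}\rangle^2\,ds=\langle(l\shuffle l)\blue{1},\hat{\mathbb{X}}^{<\infty}_{0,t}\rangle$ transfers from smooth approximants by $p$-variation continuity — are points the paper leaves implicit, but they do not change the argument.
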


\begin{proof}
 Using Proposition \ref{prop:opt_cont_pol} and \ref{prop:opt_sig_pol}, it suffices to show that
 \begin{align*}
   \sup_{l \in T((\R^{1+d})^*) } \E[Y_{\tau_{l}^r \wedge T}] \leq \sup_{l \in T((\R^{1 + d})^*) } \E[Y_{\tau_l \wedge T}].
 \end{align*}
   Choose $l \in T((\R^{1 + d})^*)$. Then
  \begin{align*}
   \E[ {Y}_{\tau_{l}^r \wedge T}\, |\, \hat{\mathbb{X}}] = \int_0^{\infty} {Y}_{\tau_z \wedge T}\, \P_Z(dz)
  \end{align*}
  where
  \begin{align*}
   \tau_z &\coloneqq \inf \left\{t \geq 0\,:\, \int_0^{t \wedge T} \langle l, \hat{\mathbb{X}}_{0,s}^{<\infty} \rangle ^2\, ds \geq z \right\} = \inf \left\{t \in [0,T] \,:\, \langle (l \shuffle l)\blue{1}/z, \hat{\mathbb{X}}_{0,t}^{<\infty} \rangle \geq 1 \right\}
  \end{align*}
  which is a signature hitting time for every $z > 0$ in the sense of the above definition. Taking expectation, we obtain
  \begin{align*}
   \E[ {Y}_{\tau_{l}^r \wedge T}] = \int_0^{\infty} \E[{Y}_{\tau_z \wedge T}] \, \P_Z(dz) \leq \sup_{\ell \in T((\R^d)^*) } \E[Y_{\tau_\ell \wedge T}]
  \end{align*}
  as claimed.
\end{proof}

\begin{remark}
 In the case of $X$ being a standard Markov process in $\R^d$ and $Y_t = G(t,X_t)$ for a continuous function $G$, it is known that
 \begin{align*}
  \sup_{\tau \in \mathcal{S}} \E[Y_{\tau \wedge T}] = \sup_{\tau \in \mathfrak{D}} \E[Y_{\tau \wedge T}]
 \end{align*}
 where $\mathfrak{D}$ denotes the set of all hitting times of closed sets in $\R^{1 + d}$ of the process $t \mapsto (t,X_t)$ \cite[Corollary 3 on p. 129]{Shi08}. Our Theorem can be seen as an extension of this classical result to non-Markovian processes.
\end{remark}

\section{Linearization of the optimal stopping problem}
\label{sec:appr-stopp-probl}
In this section, we will study randomized signature stopping times associated to linear stopping policies in $\mathcal{T}_{\mathrm{sig}}$ and a exponentially distributed random variable $Z \sim \operatorname{Exp}(1)$. 
From Proposition~\ref{lem:regularized_value} we have
\begin{equation}\label{eq:expected-exponential}
  \sup_{\tau \in \mathcal{S}} \E[Y_{\tau \wedge T}] = \sup_{l \in T((\R^{1+d})^*) } \E[Y_{\tau_l^r \wedge T}] = \sup_{l \in T((\R^{1+d})^*)} \E \left[ \int_0^T \exp \left( - \int_0^t \langle l, \hat{\mathbb{X}}_{0,s}^{< \infty} \rangle^2 \, ds \right)\, dY_t \right] + \E[Y_0].
\end{equation}

Recall that for group-like elements $\mathbf{a} \in G(V)$ as defined in~\eqref{eq:group}, polynomials of linear functionals in $\mathbf{a}$ can be expressed in terms of shuffle products of the linear functionals themselves, see~\eqref{eqn:lin_sig}. Consider the main term on the right-hand side of~\eqref{eq:expected-exponential}:
\begin{itemize}
\item The innermost term $\langle l, \hat{\mathbb{X}}_{0,s}^{< \infty} \rangle^2$ is a polynomial of a linear functional of the signature. It can, therefore, be expressed as a linear functional of the signature, more precisely, $\langle l, \hat{\mathbb{X}}_{0,s}^{< \infty} \rangle^2 = \langle l \shuffle l, \hat{\mathbb{X}}_{0,s}^{< \infty} \rangle$.
\item Given an element of the signature, integrating against a component of the underlying path produces another element of the signature. Concretely,
  \begin{equation*}
    \int_0^t \langle l, \hat{\mathbb{X}}_{0,s}^{< \infty} \rangle^2 ds = \langle (l \shuffle l) \blue{1}, \hat{\mathbb{X}}_{0,t}^{< \infty} \rangle,
  \end{equation*}
  recalling that the time-component of our driving path $\hat{X}_t = (t, X_t)$ was associated with the letter $\blue{1}$, see Remark~\ref{rem:path-letters}.
\item Next we need to apply the exponential function to $\langle (l \shuffle l) \blue{1}, \hat{\mathbb{X}}_{0,t}^{< \infty} \rangle$. Unfortunately, the exponential function is not a polynomial, so we cannot directly apply the shuffle product. However, as we shall see below, there is a corresponding \emph{exponential shuffle}, which comes with certain restrictions. Nonetheless, we shall see that we will still obtain a linear functional of the signature for our purposes.
\item Finally, we integrate against $Y$ and take the expectation. \textbf{If} $Y$ can itself be represented as a linear functional of the signature, integrating another linear functional of the signature against $Y$ will result in yet another linear functional of the signature. In this case, we can finally interchange the expectation, and the right-hand side of~\eqref{eq:expected-exponential} can be represented  as a $\sup$ over a linear functional of the \emph{expected signature} $\E[\hat{\X}^{<\infty}_{0,T}]$ of $\hat{X}$.
\end{itemize}
In the remainder of this section, we will follow through with this program.

We start with a  definition of an exponential function based on the shuffle product.
\begin{definition}
For $l\in T(V^{\ast})$ with $l=a_{0}\varnothing+\widetilde{l}$ and
$\langle \widetilde{l},\mathbf{1}\rangle =0$ we define the \emph{exponential shuffle}
\begin{equation}\label{expshuffledef}
\sexp(l):=\exp(a_{0})\sexp(\widetilde{l}),\text{ \ \ \ where
\ \ }\sexp(\widetilde{l}):=\sum_{r=0}^{\infty}\frac{1}{r!}
\widetilde{l}^{\shuffle r}.
\end{equation}
\end{definition}
Since obviously $\pi_{\leq N}(\widetilde{l}^{\shuffle r})=0$ for $r>N,$ the infinite sum is well defined as an element in the
extended tensor algebra $T((V^{\ast})).$ One may easily check that
$\sexp(\widetilde{l}_{1}+\widetilde{l}_{2})=\sexp
(\widetilde{l}_{1})\sexp(\widetilde{l}_{2})$ for $\widetilde{l}
_{1},\widetilde{l}_{2}\in T(V^{\ast})$ such that $\langle \widetilde
{l}_{1},\mathbf{1}\rangle =\langle \widetilde{l}_{2},\mathbf{1}\rangle =0.$
Thus, in particular, one has   \
\begin{align}\label{eqn:prod_rule_shuffle}
 \sexp(l_{1}+l_{2})=\sexp(l_{1})\sexp(l_{2})\text{
\ \ for all \  }l_{1},l_{2}\in T(V^{\ast}).
\end{align}

Note also that
  \begin{align*}
   \sexp(l) = \sum_{r=0}^{\infty}\frac{1}{r!}
l^{\shuffle r}
  \end{align*}
 for every $l\in T(V^{\ast})$.
 
We can now prove that the exponential shuffle linearizes the exponential
function for group-like elements. In this context, keep in mind that
$\sexp(l) \notin T(V^\ast)$ and, hence, it is not a well-defined linear
functional on $T((V))$. It is, however, trivially well-defined as a linear functional on $T(V)$, and, hence, can be applied to any projection $\pi_{\le N} (\mathbf{v})$, $\mathbf{v} \in T((V))$.
In addition, as the lemma shows, we can apply $\sexp(l)$ to
group-like elements.

\begin{lemma}\label{lem:key_shuffle_exp}
Let $l\in T(V^{\ast})$ and $\mathbf{g}\in G(V).$ One then has
\[
\left\vert \exp(\langle l,\mathbf{g}\rangle)-\left\langle \sexp
(l),\pi_{\leq N}(\mathbf{g})\right\rangle \right\vert \leq4\exp(\left\langle
l,1\right\rangle )\frac{\left(  \vert l\vert \vert\pi_{\leq\deg
(l)}(\mathbf{g})\vert\right)  ^{\left\lfloor N/\deg(l)\right\rfloor +1}
}{(\left\lfloor N/\deg(l)\right\rfloor +1)!}
\]
for  $N>2\deg(l)\vert {l}\vert \vert\pi_{\leq\deg(l)}
(\mathbf{g})\vert.$
\end{lemma}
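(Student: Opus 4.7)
I would first split off the constant part of $l$ by writing $l = a_0 \varnothing + \widetilde l$ with $a_0 := \langle l, \mathbf{1}\rangle$ and $\langle \widetilde l, \mathbf{1}\rangle = 0$, and introducing the shorthands $d_l := \deg(l)$, $M := |l|$, $K := |\pi_{\leq d_l}(\mathbf{g})|$, and $r^* := \lfloor N/d_l \rfloor$. Since $\pi_0(\mathbf{g}) = 1$, we have $\exp(\langle l, \mathbf{g}\rangle) = \exp(a_0)\exp(\langle \widetilde l, \mathbf{g}\rangle)$, and by definition $\sexp(l) = \exp(a_0)\sexp(\widetilde l)$. Pulling the common factor $\exp(a_0) = \exp(\langle l, \mathbf{1}\rangle)$ out of both sides reduces the claim to the case $a_0 = 0$, which explains the prefactor $\exp(\langle l, \mathbf{1}\rangle)$ in the stated bound; assume henceforth $a_0 = 0$.

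\textbf{Isolating the tail.} For every $r \leq r^*$ the element $\widetilde l^{\shuffle r}$ is supported in degrees $\leq r d_l \leq N$, so truncation by $\pi_{\leq N}$ acts trivially on it and the group-like identity \eqref{eqn:lin_sig} yields $\langle \widetilde l^{\shuffle r}, \pi_{\leq N}(\mathbf{g})\rangle = \langle \widetilde l^{\shuffle r}, \mathbf{g}\rangle = \langle \widetilde l, \mathbf{g}\rangle^r$. Subtracting these first $r^*+1$ terms from the Taylor expansion of $\exp(\langle \widetilde l, \mathbf{g}\rangle)$ leaves the tail
\begin{align*}
 \exp(\langle \widetilde l, \mathbf{g}\rangle) - \langle \sexp(\widetilde l), \pi_{\leq N}(\mathbf{g})\rangle = \sum_{r > r^*} \frac{1}{r!}\left( \langle \widetilde l, \mathbf{g}\rangle^r - \langle \widetilde l^{\shuffle r}, \pi_{\leq N}(\mathbf{g})\rangle\right) =: \sum_{r > r^*} \frac{D_r}{r!}.
\end{align*}

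\textbf{Bounding $D_r$ (the key step).} The real difficulty — and the only substantive obstacle — is to control $|D_r|$ using \emph{only} $K = |\pi_{\leq d_l}(\mathbf{g})|$, since the claimed bound carries no information about higher-degree components of $\mathbf{g}$; any naive appeal to admissibility $|\langle \widetilde l^{\shuffle r}, \pi_{\leq N}(\mathbf{g})\rangle|\leq |\widetilde l^{\shuffle r}||\pi_{\leq N}(\mathbf{g})|$ fails because $|\widetilde l^{\shuffle r}|$ carries huge multinomial factors and $|\pi_{\leq N}(\mathbf{g})|$ is the wrong quantity. The remedy is to decompose $\widetilde l = \sum_{j=1}^{d_l} l_j$ into its homogeneous parts $l_j \in (V^*)^{\otimes j}$ and apply the shuffle multinomial theorem:
\begin{align*}
 \widetilde l^{\shuffle r} = \sum_{|\mathbf{r}| = r} \binom{r}{\mathbf{r}}\, L_{\mathbf{r}}, \qquad L_{\mathbf{r}} := l_1^{\shuffle r_1} \shuffle \cdots \shuffle l_{d_l}^{\shuffle r_{d_l}},
\end{align*}
with each $L_{\mathbf{r}}$ homogeneous of degree $\|\mathbf{r}\| := \sum_j j r_j$. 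The group-like property gives $\langle L_{\mathbf{r}}, \mathbf{g}\rangle = \prod_j a_j^{r_j}$ for $a_j := \langle l_j, \mathbf{g}\rangle$, and pairing $L_{\mathbf{r}}$ against $\pi_{\leq N}(\mathbf{g})$ returns this value whenever $\|\mathbf{r}\|\leq N$ and $0$ otherwise. Consequently $D_r = \sum_{|\mathbf{r}|=r,\|\mathbf{r}\|>N}\binom{r}{\mathbf{r}}\prod_j a_j^{r_j}$, and admissibility $|a_j|\leq |l_j|K$ combined with the full multinomial expansion gives
\begin{align*}
 |D_r| \leq \sum_{|\mathbf{r}|=r}\binom{r}{\mathbf{r}}\prod_j |a_j|^{r_j} = \Big(\sum_j |a_j|\Big)^r \leq \big(K |\widetilde l|\big)^r \leq (MK)^r.
\end{align*}

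\textbf{Summing the tail.} The routine exponential-tail estimate
\begin{align*}
 \sum_{r > r^*} \frac{(MK)^r}{r!} \leq \frac{(MK)^{r^*+1}}{(r^*+1)!} \sum_{k \geq 0} \Big(\frac{MK}{r^*+2}\Big)^k
\end{align*}
converges because the hypothesis $N > 2 d_l M K$ forces $r^* > N/d_l - 1 > 2MK - 1$, hence $MK/(r^*+2) < 1/2$ and the geometric sum is $\leq 2$. Reinstating the prefactor $\exp(a_0)$ from the first step produces the inequality with constant $2$, which is subsumed by the stated constant $4$.
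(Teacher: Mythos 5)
Your proof is correct and follows essentially the same route as the paper's: multinomial expansion of the shuffle power, factorization via group-likeness, the bound $\left(|l|\,|\pi_{\le \deg(l)}(\mathbf{g})|\right)^{r}$ on each term, and an exponential tail estimate exploiting $N > 2\deg(l)|l|\,|\pi_{\le \deg(l)}(\mathbf{g})|$. The only cosmetic differences are that you group $\widetilde{l}$ by homogeneous degree rather than by individual words and merge the paper's two remainders $R_N^{(1)}$, $R_N^{(2)}$ into a single tail $\sum_{r>r^*} D_r/r!$, which is why you obtain the constant $2$ where the paper settles for $4$.
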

\begin{proof}
Let us write $l=a_{0}\varnothing+\widetilde{l}$ with $\langle
\widetilde{l},\mathbf{1}\rangle =0,$ where for mutually different words
$w_{1},...,w_{n},$
\[
\widetilde{l}=\lambda_{1}w_{1}+...+\lambda_{n}w_{n},\text{  and set \ }
M:=\deg(l)=\max_{1\leq i\leq n}\deg(w_{i}),\text{ \ \ }m:=\min_{1\leq i\leq
n}\deg(w_{i})\geq1.
\]
We then have
\[
\pi_{\leq N}\left(  \sexp(\widetilde{l})\right)  =\sum_{r=0}^{\left\lfloor
N/M\right\rfloor }\frac{\widetilde{l}^{\shuffle r}}{r!}+\pi_{N}\left(
\sum_{r=\left\lfloor N/\deg(l)\right\rfloor +1}^{\left\lfloor N/m\right\rfloor
}\frac{\widetilde{l}^{\shuffle r}}{r!}\right)  \in T^{N}(V^{\ast}
)\text{\ \ \ for any \ \ }N\geq 1.
\]
Hence,
\begin{align*}
\left\langle \pi_{\leq N}\left(  \sexp(l)\right)  ,\mathbf{g}\right\rangle
& =\exp(a_{0})\sum_{r=0}^{\left\lfloor N/M\right\rfloor }\frac{\langle
\widetilde{l},\mathbf{g}\rangle^{r}}{r!}
 +\exp(a_{0})\sum_{r=\left\lfloor N/M\right\rfloor +1}^{\left\lfloor
N/m\right\rfloor }\frac{1}{r!}\langle\pi_{N}\left(  \widetilde{l}^{\shuffle
r}\right)  ,\mathbf{g}\rangle\\
& =:\exp(a_{0})\sum_{r=0}^{\left\lfloor N/M\right\rfloor }\frac{\langle
\widetilde{l},\mathbf{g}\rangle^{r}}{r!}+R_{N}^{(1)},%
\end{align*}
and since $\mathbf{g}\in G(V)$ it holds that
\begin{align*}
\langle\pi_{\leq N}\left(  \widetilde{l}^{\shuffle r}\right)  ,\mathbf{g}\rangle &
=\sum_{\substack{i_{1},...,i_{r}=1\\\deg(w_{i_{1}})+...+\deg(w_{i_{r}})\leq
N}}^{n}\lambda_{i_{1}}...\lambda_{i_{r}}\left\langle w_{i_{1}}\shuffle...\shuffle
w_{i_{r}},\mathbf{g}\right\rangle \\
& =\sum_{\substack{i_{1},...,i_{r}=1\\\deg(w_{i_{1}})+...+\deg(w_{i_{r}})\leq
N}}^{n}\left\langle \lambda_{i_{1}}w_{i_{1}},\mathbf{g}\right\rangle
...\left\langle \lambda_{i_{r}}w_{i_{r}},\mathbf{g}\right\rangle .
\end{align*}
Hence we have
\begin{align*}
\left\vert \langle\pi_{\leq N}\left(  \widetilde{l}^{\shuffle r}\right)
,\mathbf{g}\rangle\right\vert  & \leq\sum_{i_{1},...,i_{r}=1}^{n}\left\vert
\left\langle \lambda_{i_{1}}w_{i_{1}},\mathbf{g}\right\rangle \right\vert
...\left\vert \left\langle \lambda_{i_{r}}w_{i_{r}},\mathbf{g}\right\rangle
\right\vert =\left(  \sum_{i=1}^{n}\left\vert \left\langle \lambda_{i}%
w_{i},\mathbf{g}\right\rangle \right\vert \right)  ^{r}\\
& \leq\vert \widetilde{l}\vert ^{r}\left\vert \pi_{\leq\deg
(l)}(\mathbf{g})\right\vert ^{r}, \text{\ \ \ and so}  %
\end{align*}
\[
\left\vert R_{N}^{(1)}\right\vert \leq\exp(a_{0})\sum_{r=\left\lfloor
N/M\right\rfloor +1}^{\infty}\frac{\vert \widetilde{l}\vert
^{r}\left\vert \pi_{\leq\deg(l)}(\mathbf{g})\right\vert ^{r}}{r!}\leq2\exp
(a_{0})\frac{\left(  \vert \widetilde{l}\vert \vert\pi_{\leq\deg
(l)}(\mathbf{g})\vert\right)  ^{\left\lfloor N/M\right\rfloor +1}}{(\left\lfloor
N/M\right\rfloor +1)!}%
\]
for $N>N_{l,\mathbf{g}}:=2M\vert \vert\widetilde{l}\vert \pi_{\leq\deg
(l)}(\mathbf{g})\vert.$ One further has (note that $\mathbf{g}_{0}=\mathbf{1}$), due to a
similar estimation,
\begin{align*}
\exp(\langle l,\mathbf{g}\rangle)  & =\exp(a_{0})\exp(\langle\widetilde
{l},\mathbf{g}\rangle)=\exp(a_{0})\sum_{r=0}^{\left\lfloor N/M\right\rfloor
}\frac{\langle\widetilde{l},\mathbf{g}\rangle^{r}}{r!}+R_{N}^{(2)}\text{
\ \ with \ }\\
\left\vert R_{N}^{(2)}\right\vert  & \leq2\exp(a_{0})\frac{\left\vert
\langle\widetilde{l},\mathbf{g}\rangle\right\vert ^{\left\lfloor
N/M\right\rfloor +1}}{(\left\lfloor N/M\right\rfloor +1)!}\leq2\exp
(a_{0})\frac{\left(  \left\vert \widetilde{l}\right\vert \vert\pi_{\leq\deg
(l)}(\mathbf{g})\vert\right)  ^{\left\lfloor N/M\right\rfloor +1}}{(\left\lfloor
N/M\right\rfloor +1)!}%
\end{align*}
for $N>N_{l,\mathbf{g}}.$ Finally, by noting that $\left\langle \pi_{\leq N}\left(
\sexp(l)\right)  ,\mathbf{g}\right\rangle =\left\langle \exp^{\shuffle
}(l),\pi_{\leq N}\left(  \mathbf{g}\right)  \right\rangle ,$ and then taking all
together we obtain the stated result.
\end{proof}

\begin{remark}
  \label{rem:exp-shuffle-linearity}
  The equation $\langle \sexp(l), \mathbf{g} \rangle = \exp( \langle l, \mathbf{g}\rangle)$ is confusing at first glance, because $\mathbf{g} \mapsto \langle\sexp(l), \mathbf{g} \rangle$ \emph{seems} linear, whereas $\mathbf{g} \mapsto \exp( \langle l, \mathbf{g}\rangle)$ clearly is not. Note, however, that $\sexp(l) \in T((V^\ast))$ and, hence, does not define a linear map on $T((V))$. Indeed, the group $G(V)$ is not closed under linear combination, and, hence, Lemma~\ref{lem:key_shuffle_exp} does simply not apply to a linear combination of elements $\mathbf{g}_1,\mathbf{g}_2 \in G(V)$.
\end{remark}

The exponential shuffle satisfies a differential equation, which we shall use
later. Note that terms of the form
$\langle \sexp(l\blue{1}), \hat{\mathbb{X}}^{\leq N}_{0,t} \rangle$ are
(classically) differentiable in $t$.

\begin{lemma}\label{lem:shuffle_deriv}
  For every polynomial $l = \lambda_1 w_1 + \ldots + \lambda_n w_n \in T((\R^{1+d})^*)$,
  \begin{align*}
    \frac{d}{d t} \langle \sexp(l {\color{blue} 1}), \hat{\mathbb{X}}^{\leq N}_{0,t} \rangle = \sum_{i = 1}^n \langle \lambda_i w_i,  \hat{\mathbb{X}}^{< \infty}_{0,t} \rangle \langle \sexp(l {\color{blue} 1}), \hat{\mathbb{X}}^{\leq N - \operatorname{deg}(w_i) - 1}_{0,t} \rangle.
  \end{align*}
\end{lemma}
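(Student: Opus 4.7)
The plan is to combine three ingredients: the structural fact that every non-empty word appearing in $\sexp(l\blue{1})$ ends in the letter $\blue{1}$; an algebraic identification of the $t$-derivative with stripping that trailing $\blue{1}$; and the group-like shuffle identity applied to $\hat{\X}^{<\infty}_{0,t}$. First I would verify by induction on $r$, using the shuffle recursion $u\blue{1}\shuffle v\blue{1} = (u\shuffle v\blue{1})\blue{1} + (u\blue{1}\shuffle v)\blue{1}$, that every word appearing in $(l\blue{1})^{\shuffle r}$ for $r\ge 1$ ends in $\blue{1}$. Since $\blue{1}$ corresponds to the finite-variation time component of $\hat{X}$, for any word $u$ the identity $\langle u\blue{1}, \hat{\X}^{<\infty}_{0,t}\rangle = \int_0^t \langle u, \hat{\X}^{<\infty}_{0,s}\rangle\, ds$ gives classical differentiability with $\frac{d}{dt}\langle u\blue{1}, \hat{\X}^{<\infty}_{0,t}\rangle = \langle u, \hat{\X}^{<\infty}_{0,t}\rangle$, so termwise differentiation of the finite sum representing $\langle\sexp(l\blue{1}), \hat{\X}^{\le N}_{0,t}\rangle$ is legitimate.

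Next I would introduce the linear operator $D\colon T((\R^{1+d})^*) \to T((\R^{1+d})^*)$ defined on words by $D(w\blue{1}) = w$, $D(w\blue{j}) = 0$ for $j \neq 1$, and $D(\varnothing) = 0$. A short induction on total word length, again based on the shuffle recursion, shows that $D$ is a derivation for $\shuffle$. Applying the Leibniz rule together with $D(l\blue{1}) = l$ gives $D((l\blue{1})^{\shuffle r}) = r\, l \shuffle (l\blue{1})^{\shuffle(r-1)}$, and summing the exponential series produces the clean identity
\[
D(\sexp(l\blue{1})) = l \shuffle \sexp(l\blue{1}).
\]
Because $D$ decreases degrees of non-constant words by exactly one, it commutes with truncation in the form $D \circ \pi_{\le N} = \pi_{\le N-1} \circ D$ on elements whose non-constant words end in $\blue{1}$. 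Combined with the first step, this yields
\[
\frac{d}{dt}\langle \sexp(l\blue{1}), \hat{\X}^{\le N}_{0,t}\rangle = \langle l \shuffle \sexp(l\blue{1}), \hat{\X}^{\le N-1}_{0,t}\rangle.
\]

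Finally I would split the right-hand side into the form claimed in the lemma. Writing $l = \sum_{i=1}^n \lambda_i w_i$, for each $i$ the fact that shuffling with $w_i$ increases degree by $\deg(w_i)$ gives $\pi_{\le N-1}(w_i \shuffle \sexp(l\blue{1})) = w_i \shuffle \pi_{\le N-1-\deg(w_i)}(\sexp(l\blue{1}))$. Since $\hat{\X}^{<\infty}_{0,t} \in G(\R^{1+d})$ is group-like, the shuffle factorization $\langle w_i \shuffle a, \hat{\X}^{<\infty}_{0,t}\rangle = \langle w_i, \hat{\X}^{<\infty}_{0,t}\rangle \langle a, \hat{\X}^{<\infty}_{0,t}\rangle$ then separates the pairing into $\langle \lambda_i w_i, \hat{\X}^{<\infty}_{0,t}\rangle \cdot \langle \sexp(l\blue{1}), \hat{\X}^{\le N-\deg(w_i)-1}_{0,t}\rangle$, and summing over $i$ concludes. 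The main obstacle, and the only delicate point, is the bookkeeping of truncation levels in this last step: one must carefully align the projections before invoking the group-like factorization so that the correct exponent $N-\deg(w_i)-1$ emerges on the right. All other steps reduce to direct computations at the word level.
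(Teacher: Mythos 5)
Your proof is correct, but it takes a genuinely different route from the paper's. The paper works entirely at the level of the pairing: using the group-like property of $\hat{\X}^{<\infty}_{0,t}$ it first expands $\langle \sexp(l\blue{1}), \hat{\X}^{\le N}_{0,t}\rangle$ as a finite sum of monomials $\prod_i \langle \lambda_i w_i\blue{1}, \hat{\X}^{<\infty}_{0,t}\rangle^{k_i}/k_i!$ over multi-indices with $\sum_i k_i\deg(w_i\blue{1}) \le N$, differentiates each monomial with the product rule and the elementary identity $\tfrac{d}{dt}\langle w\blue{1}, \hat{\X}^{<\infty}_{0,t}\rangle = \langle w, \hat{\X}^{<\infty}_{0,t}\rangle$, and then shifts the summation index $k_i \mapsto k_i+1$ to recognize the truncated exponential at level $N-\deg(w_i\blue{1})$. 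You instead work in the shuffle algebra: your operator $D$ (the adjoint of right-concatenation by $\blue{1}$) is indeed a shuffle derivation --- this follows directly from the defining recursion $u\blue{i}\shuffle v\blue{j} = (u\shuffle v\blue{j})\blue{i} + (u\blue{i}\shuffle v)\blue{j}$ --- and the resulting identity $D(\sexp(l\blue{1})) = l\shuffle\sexp(l\blue{1})$ is a clean, reusable statement that the shuffle exponential solves a linear ODE in the shuffle algebra. Your observation that every non-empty word of $\sexp(l\blue{1})$ ends in $\blue{1}$ is exactly what justifies termwise classical differentiation (for a word not ending in $\blue{1}$ the pairing with a rough path need not be classically differentiable), and your truncation bookkeeping $D\circ\pi_{\le N} = \pi_{\le N-1}\circ D$ and $\pi_{\le N-1}(w_i\shuffle\,\cdot\,) = w_i\shuffle\pi_{\le N-1-\deg(w_i)}(\,\cdot\,)$ is correct, as is the final group-like factorization. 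The trade-off: the paper's computation is self-contained and makes the index shift explicit at the cost of a heavy multinomial display, while your argument cleanly separates the algebra from the analysis and defers the group-like property to the last step, at the cost of proving the derivation property as a separate (easy) induction.
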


 \begin{proof}
   Note that
   \begin{align*}
     \frac{d}{dt} \langle w \blue{1}, \hat{\mathbb{X}}_{0,t}^{<\infty} \rangle = \frac{d}{dt} \int_0^t \langle w, \hat{\mathbb{X}}_{0,s}^{<\infty} \rangle \, ds = \langle w, \hat{\mathbb{X}}_{0,t}^{<\infty} \rangle
   \end{align*}
   for every word $w$. Hence, for $l = \lambda_1 w_1 + \ldots + \lambda_n w_n$, one always has $\langle l\blue{1},\mathbf{1}\rangle=0$ and so by (\ref{expshuffledef}),
   \begin{align*}
     \frac{d}{d t} &\langle \sexp(l {\color{blue} 1}), \hat{\mathbb{X}}^{\leq N}_{0,t} \rangle \\
     = \frac{d}{d t} &\sum_{0 \leq k_1 \operatorname{deg}(w_1 \blue{1}) + \ldots + k_n \operatorname{deg}(w_n\blue{1}) \leq N} \frac{\langle \lambda_1 w_1 {\color{blue} 1}, \hat{\mathbb{X}}^{< \infty}_{0,t}\rangle^{k_1}}{k_1!} \cdots \frac{\langle \lambda_n w_n {\color{blue} 1}, \hat{\mathbb{X}}^{< \infty}_{0,t}\rangle^{k_n}}{k_n!} \\
     = \phantom{\frac{d}{d t}} &\sum_{0 \leq k_1 \operatorname{deg}(w_1\blue{1}) + \ldots + k_n \operatorname{deg}(w_n\blue{1}) \leq N} \langle \lambda_1 w_1, \hat{\mathbb{X}}^{< \infty}_{0,t}\rangle \frac{\langle \lambda_1 w_1 \blue{1}, \hat{\mathbb{X}}^{< \infty}_{0,t}\rangle^{k_1 - 1}}{(k_1 - 1) !} \frac{\langle \lambda_2 w_2 {\color{blue} 1}, \hat{\mathbb{X}}^{< \infty}_{0,t}\rangle^{k_2}}{k_2!}\cdots \frac{\langle \lambda_n w_n {\color{blue} 1}, \hat{\mathbb{X}}^{< \infty}_{0,t}\rangle^{k_n}}{k_n!} \\
                   &\qquad  + \ldots + \langle \lambda_n w_n, \hat{\mathbb{X}}^{< \infty}_{0,t}\rangle \frac{\langle \lambda_1 w_1 {\color{blue} 1}, \hat{\mathbb{X}}^{< \infty}_{0,t}\rangle^{k_1}}{k_1!} \cdots \frac{\langle \lambda_{n-1} w_{n-1} {\color{blue} 1}, \hat{\mathbb{X}}^{< \infty}_{0,t}\rangle^{k_{n-1}}}{k_{n-1}!} \frac{\langle \lambda_n w_n \blue{1}, \hat{\mathbb{X}}^{< \infty}_{0,t}\rangle^{k_n-1}}{(k_n -1)!}
   \end{align*}
   and
   \begin{align*}
     &\sum_{0 \leq k_1\operatorname{deg}(w_1\blue{1}) + \ldots + k_n \operatorname{deg}(w_n\blue{1}) \leq N} \frac{\langle \lambda_1 w_1 \blue{1}, \hat{\mathbb{X}}^{< \infty}_{0,t}\rangle^{k_1 - 1}}{(k_1 - 1) !} \frac{\langle \lambda_2 w_2 {\color{blue} 1}, \hat{\mathbb{X}}^{< \infty}_{0,t}\rangle^{k_2}}{k_2!}\cdots \frac{\langle \lambda_n w_n {\color{blue} 1}, \hat{\mathbb{X}}^{< \infty}_{0,t}\rangle^{k_n}}{k_n!} \\
     =  &\sum_{0 \leq (k_1 + 1) \operatorname{deg}(w_1\blue{1}) + \ldots + k_n \operatorname{deg}(w_n\blue{1}) \leq N} \frac{\langle \lambda_1 w_1 \blue{1}, \hat{\mathbb{X}}^{< \infty}_{0,t}\rangle^{k_1}}{k_1 !} \frac{\langle \lambda_2 w_2 {\color{blue} 1}, \hat{\mathbb{X}}^{< \infty}_{0,t}\rangle^{k_2}}{k_2!}\cdots \frac{\langle \lambda_n w_n {\color{blue} 1}, \hat{\mathbb{X}}^{< \infty}_{0,t}\rangle^{k_n}}{k_n!} \\
     =\ &\langle \sexp(l {\color{blue} 1}), \hat{\mathbb{X}}^{\leq N - \operatorname{deg}(w_1\blue{1})}_{0,t} \rangle.\qedhere
   \end{align*}
 \end{proof}

We are now ready to formulate the main result of this section. Consider the optimization problem~\eqref{eq:expected-exponential}, which we modify by expressing the exponential by the exponential shuffle. Then we obtain convergence to the value of the optimal stopping problem. The proof requires us to localize w.r.t.~the rough path metric. Other than that, the below formulation is now essentially implementable: In particular, the result is formulated in terms of truncated signatures, which is necessary also from a numerical point of view.

 \begin{proposition}\label{thr:main_approx}
   For given $\kappa > 0$, we define the stopping time
   \begin{align*}
     S = S_{\kappa} = \inf \{ t  \geq 0\,:\, \| \hat{\mathbb{X}}\|_{p-var;[0,t]} \geq \kappa \} \wedge T.
   \end{align*}
   Assume $Z \sim \operatorname{Exp}(1)$ and
   $\E \left[ \| Y\|_{\infty} \right] < \infty$. Then
   \begin{align}\label{eqn:approx_res}
     \sup_{l \in T((\R^{1+d})^*)} \E[Y_{\tau_l^r \wedge T}] = \lim_{\kappa \to \infty} \lim_{K \to \infty} \lim_{N \to \infty} \sup_{|l| + \operatorname{deg}(l) \leq K} \E \left[\int_0^{S_\kappa} \langle \sexp( -(l \shuffle l){\color{blue} 1}), \hat{\mathbb{X}}^{\leq N}_{0,t} \rangle \, d Y_t \right] + \E[Y_0]
   \end{align}
   where the first two limit signs may be interchanged.
\end{proposition}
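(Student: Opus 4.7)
The starting point is~\eqref{eq:expected-exponential} combined with the shuffle identity~\eqref{eqn:lin_sig} and the telescoping $\int_0^t \langle w, \hat{\mathbb{X}}^{<\infty}_{0,s}\rangle\, ds = \langle w\blue{1}, \hat{\mathbb{X}}^{<\infty}_{0,t}\rangle$, which together yield
\begin{equation*}
  \sup_{l} \E[Y_{\tau^r_l \wedge T}] - \E[Y_0] = \sup_{l} \E\left[\int_0^T \exp\bigl(-\langle (l\shuffle l)\blue{1}, \hat{\mathbb{X}}^{<\infty}_{0,t}\rangle\bigr)\, dY_t\right] =: \sup_l V_\infty(l, \infty).
\end{equation*}
Writing $V_N(l, \kappa) := \E\bigl[\int_0^{S_\kappa} \langle\sexp(-(l\shuffle l)\blue{1}), \hat{\mathbb{X}}^{\leq N}_{0,t}\rangle\, dY_t\bigr]$ and $V_\infty(l,\kappa)$ analogously (with $\exp$ and the untruncated signature), the task is to show $\sup_l V_\infty(l, \infty) = \lim_\kappa \lim_K \lim_N \sup_{|l|+\operatorname{deg}(l) \le K} V_N(l,\kappa)$, with each of the three limits addressing one approximation: truncation of the signature, size of the linear functional, and pathwise localization.

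For the innermost limit, fix $\kappa$ and $K$ and apply Lemma~\ref{lem:key_shuffle_exp} to $l' := -(l \shuffle l)\blue{1}$, which has $\langle l', \mathbf{1}\rangle = 0$ and satisfies $\operatorname{deg}(l') \le 2K+1$, $|l'| \le K^2$ whenever $|l| + \operatorname{deg}(l) \le K$. On $[0, S_\kappa]$ the projections $|\pi_{\le 2K+1}(\hat{\mathbb{X}}^{<\infty}_{0,t})|$ are controlled by a constant depending only on $\kappa, K, p$ via the standard rough-path estimates inherited from Lyons' extension theorem (\cite[Thm.~3.7]{LCL07}). Hence Lemma~\ref{lem:key_shuffle_exp} delivers uniform convergence $\langle\sexp(l'), \hat{\mathbb{X}}^{\le N}_{0,t}\rangle \to \exp(\langle l', \hat{\mathbb{X}}^{<\infty}_{0,t}\rangle)$ on $[0, S_\kappa] \times \{|l| + \operatorname{deg}(l) \le K\}$ as $N \to \infty$. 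Since $Y$ is merely continuous, the integral $\int_0^{S_\kappa}(\cdot)\, dY_t$ is interpreted through integration by parts; the boundary term converges uniformly by what was just said, and the drift $\tfrac{d}{dt}\langle \sexp(l'), \hat{\mathbb{X}}^{\le N}_{0,t}\rangle$ is expressed by Lemma~\ref{lem:shuffle_deriv} as a finite sum of products whose factors converge uniformly on the same set by the same argument. Bounded convergence then yields $\sup_{|l|+\operatorname{deg}(l)\le K} |V_N(l,\kappa) - V_\infty(l,\kappa)| \to 0$ as $N\to\infty$.

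The middle limit $K\to\infty$ is monotone convergence of the supremum over an exhausting family, giving $\sup_{|l|+\operatorname{deg}(l)\le K} V_\infty(l,\kappa) \uparrow \sup_l V_\infty(l,\kappa)$. For the outer limit, observe that $t \mapsto \exp(-\langle(l\shuffle l)\blue{1}, \hat{\mathbb{X}}^{<\infty}_{0,t}\rangle)$ is non-increasing with values in $[0,1]$, because its argument is non-decreasing in $t$; its total variation on $[0,T]$ is therefore at most $1$ \emph{uniformly in $l$}. Integration by parts on $[S_\kappa,T]$ then gives $|V_\infty(l,\kappa) - V_\infty(l,\infty)| \le 3\E[\|Y\|_\infty \mathbbm{1}_{\{S_\kappa < T\}}]$, which is independent of $l$. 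Since $\|\hat{\mathbb{X}}\|_{p-\mathrm{var};[0,T]} < \infty$ a.s.\ implies $S_\kappa \nearrow T$ a.s., dominated convergence (with integrable dominator $\|Y\|_\infty$) yields the claimed uniform-in-$l$ convergence. From this both $\lim_\kappa \sup_l V_\infty(l,\kappa) = \sup_l V_\infty(l,\infty)$ and the interchangeability of $\lim_\kappa$ with $\lim_K$ follow at once, completing the proof.

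The main technical obstacle sits in the first step: securing uniformity in $l$ on the bounded set $\{|l|+\operatorname{deg}(l)\le K\}$ of the error bound of Lemma~\ref{lem:key_shuffle_exp}, since the right-hand side there depends explicitly on $|l|$, $\operatorname{deg}(l)$ and on $|\pi_{\le\operatorname{deg}(l')}(\hat{\mathbb{X}}^{<\infty}_{0,t})|$. This is precisely what the localization $S_\kappa$ was designed to achieve, and the required bounds on the signature projections are standard consequences of the $p$-variation bound for geometric rough paths.
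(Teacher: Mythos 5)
Your proof is correct and follows essentially the same route as the paper's: the innermost $N$-limit via Lemma~\ref{lem:key_shuffle_exp}, Lemma~\ref{lem:shuffle_deriv}, Lyons' extension theorem and integration by parts on $[0,S_\kappa]$; the $K$-limit by exhaustion of the supremum; and the $\kappa$-limit through an error bound that is uniform in $l$ (you get it by integrating by parts on $[S_\kappa,T]$ and exploiting monotonicity of the exponential weight, while the paper instead bounds $|Y_{\tau\wedge T}-Y_{\tau\wedge S}|$ by the modulus of continuity of $Y$ on $[S_\kappa,T]$ — both yield the uniformity needed to interchange the first two limits). The only inaccuracy is the claim $|{-(l\shuffle l)\blue{1}}|\le K^2$: the shuffle of two words of length at most $K$ has $\ell^1$-norm up to $\binom{2K}{K}$, so the correct bound is of the form $C_K$ depending only on $K$ — which is exactly what the paper uses and all that the argument requires, so nothing breaks.
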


\begin{proof}
 To ease notation, assume that $Y_0 = 0$. Since
 \begin{align*}
  |Y_{\tau_{l} \wedge T} - Y_{\tau_{l} \wedge S}| \leq \sup_{|t-s| \leq T - S} |Y_{t} - Y_{s}| \to 0
 \end{align*}
 for every $l$ as $\kappa \to \infty$ and
 \begin{align*}
   \lim_{K \to \infty} \sup_{|l| + \operatorname{deg}(l) \leq K} \E[Y_{\tau_l \wedge \hat{S}}] = \sup_{l \in T((\R^{1+d})^*)} \E[Y_{\tau_l \wedge \hat{S}}],
 \end{align*}
 with $\hat{S}$ being either $S$ or $T$, it follows that
 \begin{align}\label{eqn:easy_approx}
   \sup_{l \in T((\R^{1+d})^*)} \E[Y_{\tau_l \wedge T}] = \lim_{\kappa \to \infty} \lim_{K \to \infty}  \sup_{|l| + \operatorname{deg}(l) \leq K} \E[Y_{\tau_l \wedge S}] =  \lim_{K \to \infty} \lim_{\kappa \to \infty}  \sup_{|l| + \operatorname{deg}(l) \leq K} \E[Y_{\tau_l \wedge S}].
 \end{align}
 Now fix $\kappa$, $K$ and $l$ with $|l| + \operatorname{deg}(l) \leq K$. Recall the estimate
 \begin{align*}
  \left| \int_0^T f(s)\, dg(s) \right| \leq T \|f'\|_{\infty} \|g\|_{\infty} + |f(T)g(T) - f(0)g(0)|.
 \end{align*}
Note that
 \begin{align*}
  \exp \left(- \int_0^{t} \langle l, \hat{\mathbb{X}}_{0,s}^{< \infty} \rangle^2 \, ds \right) = \exp( - \langle (l \shuffle l){\color{blue} 1}, \hat{\mathbb{X}}_{0,t}^{< \infty} \rangle ).
 \end{align*}
 Fix $N$. Then
 \begin{align*}
  &\left| \E \left[ \int_0^S \exp( - \langle (l \shuffle l){\color{blue} 1}, \hat{\mathbb{X}}_{0,t}^{< \infty} \rangle ) \, dY_t \right] - \E \left[ \int_0^{S} \langle \sexp( -(l \shuffle l){\color{blue} 1}), \hat{\mathbb{X}}^{\leq N}_{0,t} \rangle \, d Y_t \right] \right| \\
  \leq\ &(1 + T) \E \left[ \|Y\|_{\infty} \| \exp( - \langle (l \shuffle l){\color{blue} 1}, \hat{\mathbb{X}}_{0,\cdot}^{< \infty} \rangle ) -  \langle \sexp( -(l \shuffle l){\color{blue} 1}), \hat{\mathbb{X}}^{\leq N}_{0,\cdot} \rangle \|_{\mathcal{C}^1[0,S]} \right].
 \end{align*}
  Using Lemma \ref{lem:key_shuffle_exp},
  \begin{align*}
   \| \exp( - \langle (l \shuffle l){\color{blue} 1}, \hat{\mathbb{X}}_{0,\cdot}^{< \infty} \rangle ) -  \langle \sexp( -(l \shuffle l){\color{blue} 1}), \hat{\mathbb{X}}^{\leq N}_{0,\cdot} \rangle \|_{\infty;[0,S]} \leq 4 \sup_{t \in [0,S]} \frac{(|(l \shuffle l){\color{blue} 1}|  |\hat{\X}_{0,t}^{\leq 2\operatorname{deg}(l) + 1}|)^{M}}{M!}.
  \end{align*}
  where $M = \lfloor N/(2\operatorname{deg}(l) + 1) \rfloor + 1$ provided $N$ is sufficiently large. Clearly, $|(l \shuffle l){\color{blue} 1}| \leq C_K$. Using Lyons' Extension theorem \cite[Theorem 3.7]{LCL07}, we can estimate
  \begin{align*}
    \sup_{t \in [0,S]}|\hat{\X}_{0,t}^{\leq 2\operatorname{deg}(l) + 1}| \leq \|\hat{\X}^{\leq 2\operatorname{deg}(l) + 1}\|_{p-\mathrm{var};[0,S]} \leq C(1 + \|\hat{\X}\|_{p-\mathrm{var};[0,S]})^{2K + 1} = C(1 + \kappa)^{2K + 1}.
  \end{align*}
  Therefore, we obtain an estimate of the form
  \begin{align*}
   \| \exp( - \langle (l \shuffle l){\color{blue} 1}, \hat{\mathbb{X}}_{0,\cdot}^{< \infty} \rangle ) -  \langle \sexp( -(l \shuffle l){\color{blue} 1}), \hat{\mathbb{X}}^{\leq N}_{0,\cdot} \rangle \|_{\infty;[0,S]} \leq \frac{C^M}{M!}
  \end{align*}
  for a deterministic constant $C$.
  
  Next, we consider the derivatives. Set $\tilde{l} = -(l \shuffle l)$ and assume $\tilde{l} = \lambda_1 w_1 + \ldots + \lambda_k w_k$. Clearly,
  \begin{align*}
    \frac{d}{dt} \exp( \langle \tilde{l}{\color{blue} 1}, \hat{\mathbb{X}}_{0,t}^{< \infty} \rangle ) = \langle \tilde{l}, \hat{\mathbb{X}}_{0,t}^{< \infty} \rangle \exp(  \langle \tilde{l} {\color{blue} 1}, \hat{\mathbb{X}}_{0,t}^{< \infty} \rangle )
  \end{align*}
  and Lemma \ref{lem:shuffle_deriv} shows that
  \begin{align*}
   \frac{d}{dt} \langle \sexp( \tilde{l}{\color{blue} 1}), \hat{\mathbb{X}}^{\leq N}_{0,t} \rangle = \sum_{i = 1}^k \langle \lambda_i w_i,  \hat{\mathbb{X}}^{< \infty}_{0,t} \rangle \langle \sexp(\tilde{l} {\color{blue} 1}), \hat{\mathbb{X}}^{\leq N - \operatorname{deg}(w_i) -1}_{0,t} \rangle.
  \end{align*}
  Thus for $t \in [0,S]$,
  \begin{align*}
   &\left|\frac{d}{dt} \left(\exp( \langle \tilde{l}{\color{blue} 1}, \hat{\mathbb{X}}_{0,t}^{< \infty} \rangle ) - \langle \sexp( \tilde{l}{\color{blue} 1}), \hat{\mathbb{X}}^{\leq N}_{0,t} \rangle \right) \right| \\
   \leq\ &\sum_{i = 1}^k | \langle \lambda_i w_i,  \hat{\mathbb{X}}^{< \infty}_{0,t} \rangle| \left| \exp(  \langle \tilde{l} {\color{blue} 1}, \hat{\mathbb{X}}_{0,t}^{< \infty} \rangle ) - \langle \sexp(\tilde{l} {\color{blue} 1}), \hat{\mathbb{X}}^{\leq N - \operatorname{deg}(w_i) - 1}_{0,t} \rangle \right|.
  \end{align*}
  Using Lyons' Extension theorem,
  \begin{align*}
   | \langle \lambda_i w_i,  \hat{\mathbb{X}}^{< \infty}_{0,t} \rangle| \leq C |\lambda_i| \| \hat{\mathbb{X}} \|^{\operatorname{deg}(w_i)}_{p-\mathrm{var};[0,S]} \leq C |\lambda_i|
  \end{align*}
  for a deterministic constant $C>0$. Lemma \ref{lem:key_shuffle_exp} implies that for $N$ sufficiently large,
  \begin{align*}
   \left| \exp(  \langle \tilde{l} {\color{blue} 1}, \hat{\mathbb{X}}_{0,t}^{< \infty} \rangle ) - \langle \sexp(\tilde{l} {\color{blue} 1}), \hat{\mathbb{X}}^{\leq N - \operatorname{deg}(w_i) - 1}_{0,t} \rangle \right| \leq \frac{C^M}{M!}
  \end{align*}
  for a deterministic constant $C > 0$ and $M \to \infty$ as $N \to \infty$. It follows that also
  \begin{align*}
   \left\| \frac{d}{dt} \left( \exp( - \langle (l \shuffle l){\color{blue} 1}, \hat{\mathbb{X}}_{0,t}^{< \infty} \rangle ) -  \langle \sexp( -(l \shuffle l){\color{blue} 1}), \hat{\mathbb{X}}^{\leq N}_{0,t} \rangle \right) \right\|_{\infty;[0,S]} \leq \frac{C^{M}}{M!}.
  \end{align*}
  This implies that
  \begin{align*}
   &\sup_{|l| + \mathrm{deg}(l) \leq K} \left| \E \left[ \int_0^S \exp( - \langle (l \shuffle l){\color{blue} 1}, \hat{\mathbb{X}}_{0,t}^{< \infty} \rangle ) \, dY_t \right] - \E \left[ \int_0^{S} \langle \sexp( -(l \shuffle l){\color{blue} 1}), \hat{\mathbb{X}}^{\leq N}_{0,t} \rangle \, d Y_t \right] \right| \to 0
  \end{align*}
  as $N \to \infty$ and, in particular,
  \begin{align*}
    \lim_{N \to \infty} &\sup_{|l| + \mathrm{deg}(l) \leq K} \E \left[ \int_0^{S} \langle \sexp( -(l \shuffle l){\color{blue} 1}), \hat{\mathbb{X}}^{\leq N}_{0,t} \rangle \, d Y_t \right] \\
    = &\sup_{|l| + \mathrm{deg}(l) \leq K} \E \left[ \int_0^S \exp( - \langle (l \shuffle l){\color{blue} 1}, \hat{\mathbb{X}}_{0,t}^{< \infty} \rangle ) \, dY_t \right] = \sup_{|l| + \mathrm{deg}(l) \leq K} \E [Y_{\tau_l^r \wedge S} ].
  \end{align*}
  Together with \eqref{eqn:easy_approx}, this proves \eqref{eqn:approx_res}.
\end{proof}

Often, one is interested to solve the stopping problem for specific functionals of the underlying process $X$. In the next corollary, we consider a particular example. To simplify the exposition, we will consider the case $d=1$ only. The generalization to arbitrary dimensions $d$ is straightforward.

  \begin{corollary}\label{cor:opt-stopping-expected-sig}
  Assume $d = 1$ and that
  \begin{align*}
   Y_t = G(X_t) + \int_0^t L(X_s)\, ds
  \end{align*}
  for polynomials $G$ and $L$. Then
   \begin{align*}
   &\sup_{l \in T((\R^{1+d})^*)} \E[Y_{\tau_l^r \wedge T}] \\
   &= \lim_{\kappa \to \infty} \lim_{K \to \infty} \lim_{N \to \infty}  \sup_{|l| + \operatorname{deg}(l) \leq K}\langle (\sexp( -(l \shuffle l){\color{blue} 1}) \shuffle  G^{' \shuffle}({\color{blue} 2})) {\color{blue} 2} + (\sexp( -(l \shuffle l){\color{blue} 1}) \shuffle  L^{\shuffle}({\color{blue} 2})) {\color{blue} 1} , \E [\hat{\mathbb{X}}^{\le N}_{0,S} ] \rangle \\
   &\qquad + \E[Y_0].
  \end{align*}
  In particular, if $d = 1$ and $X_0 = 0$,
   \begin{align}\label{eqn:approx_stopping}
        \sup_{l \in T((\R^{2})^*)} \E[X_{\tau_l^r \wedge T}] = \lim_{\kappa \to \infty} \lim_{K \to \infty} \lim_{N \to \infty} \sup_{|l| + \operatorname{deg}(l) \leq K} \langle \sexp( -(l \shuffle l){\color{blue} 1}){\color{blue} 2}, \E [\hat{\mathbb{X}}^{\leq N}_{0,S} ] \rangle .
   \end{align}
  \end{corollary}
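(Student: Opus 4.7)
The plan is to combine Proposition~\ref{thr:main_approx} with the algebraic identities of Section~\ref{sec:notat-basic-defin} to rewrite the integrand inside the expectation as a linear functional of the signature at the endpoint $S$, so that $\E[\cdot]$ commutes with it and the result is a pairing against $\E[\hat{\mathbb{X}}_{0,S}^{\le N}]$ in the $N\to\infty$ limit.

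First I would compute $dY_t = G'(X_t)\,dX_t + L(X_t)\,dt$ and, using Remark~\ref{rem:path-letters} together with the linearization identity~\eqref{eqn:lin_sig}, write $G'(X_t) = \langle G'^{\shuffle}(\blue{2}), \hat{\mathbb{X}}_{0,t}^{<\infty}\rangle$ and $L(X_t) = \langle L^{\shuffle}(\blue{2}), \hat{\mathbb{X}}_{0,t}^{<\infty}\rangle$. Since the truncated pairing equals $\langle \pi_{\le N}(\sexp(-(l\shuffle l)\blue{1})), \hat{\mathbb{X}}_{0,t}^{<\infty}\rangle$, I would then invoke the group-like property~\eqref{eq:group} of the full signature to collapse each product of two signature pairings into a single pairing against the shuffle of the functionals, for example
\begin{align*}
  \langle \pi_{\le N}(\sexp(-(l\shuffle l)\blue{1})), \hat{\mathbb{X}}_{0,t}^{<\infty}\rangle \,\langle G'^{\shuffle}(\blue{2}), \hat{\mathbb{X}}_{0,t}^{<\infty}\rangle
  = \langle \pi_{\le N}(\sexp(-(l\shuffle l)\blue{1}))\shuffle G'^{\shuffle}(\blue{2}), \hat{\mathbb{X}}_{0,t}^{<\infty}\rangle,
\end{align*}
and analogously with $L^{\shuffle}(\blue{2})$.

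Next I would apply the algebraic form of Chen's identity---namely $\int_0^S \langle v, \hat{\mathbb{X}}_{0,t}^{<\infty}\rangle\, dX_t = \langle v\blue{2}, \hat{\mathbb{X}}_{0,S}^{<\infty}\rangle$ and $\int_0^S \langle v, \hat{\mathbb{X}}_{0,t}^{<\infty}\rangle\, dt = \langle v\blue{1}, \hat{\mathbb{X}}_{0,S}^{<\infty}\rangle$, which hold by the defining structure of iterated integrals of a weakly geometric rough path and extend by linearity---to reduce the two stochastic integrals to a single pairing of a \emph{finite}-degree polynomial against $\hat{\mathbb{X}}_{0,S}^{<\infty}$. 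Since this functional is of finite degree, $\E[\cdot]$ trivially commutes with the pairing, producing a pairing against $\E[\hat{\mathbb{X}}_{0,S}^{\le M}]$ for some truncation level $M = M(N,K,\deg G,\deg L)$ growing with $N$. Substituting into~\eqref{eqn:approx_res} yields the first displayed identity. The special case~\eqref{eqn:approx_stopping} is then immediate: $G(x) = x$ gives $G'^{\shuffle}(\blue{2}) = \varnothing$ so the shuffle with $\sexp$ acts trivially, $L \equiv 0$ kills the second summand, and $X_0 = 0$ gives $\E[Y_0] = 0$.

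The main obstacle I anticipate is the bookkeeping of truncation levels, since $\sexp(-(l\shuffle l)\blue{1}) \in T((V^{\ast}))$ rather than $T(V^{\ast})$. The derivation naturally produces $\pi_{\le N}(\sexp(\cdots))$ paired against $\E[\hat{\mathbb{X}}_{0,S}^{\le M}]$, whereas the statement couples the untruncated $\sexp(\cdots)$ with $\E[\hat{\mathbb{X}}_{0,S}^{\le N}]$; these finite-$N$ expressions differ, and one must show they share the same $N\to\infty$ limit. This can be done via the error bound of Lemma~\ref{lem:key_shuffle_exp} applied on the localised event $\{S = S_\kappa\}$, on which $\|\hat{\mathbb{X}}\|_{p\text{-}\mathrm{var};[0,S]} \le \kappa$, combined with the $\mathcal{C}^1$-control on the relevant time integrals already deployed in the proof of Proposition~\ref{thr:main_approx}.
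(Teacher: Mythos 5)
Your proposal is correct and follows essentially the same route as the paper's proof: decompose $dY_t$, linearize $G'(X_t)$ and $L(X_t)$ via the shuffle identity, absorb the time/space integrals as concatenation with $\blue{1}$ and $\blue{2}$, interchange expectation with the resulting finite-degree linear functional, and invoke Proposition~\ref{thr:main_approx}. Your extra remark on reconciling the truncation levels ($N$ versus $N+\deg(G')+1$) is a point the paper glosses over, and your proposed fix via Lemma~\ref{lem:key_shuffle_exp} is sound.
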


\begin{proof}
 We have
  \begin{align*}
   \int_0^{S} \langle \sexp( -(l \shuffle l){\color{blue} 1}), \hat{\mathbb{X}}^{\leq N}_{0,t} \rangle \, d Y_t &= \int_0^{S} \langle \sexp( -(l \shuffle l){\color{blue} 1}), \hat{\mathbb{X}}^{\leq N}_{0,t} \rangle G'(X_t) \, d X_t \\
   &\quad + \int_0^{S} \langle \sexp( -(l \shuffle l){\color{blue} 1}), \hat{\mathbb{X}}^{\leq N}_{0,t} \rangle L(X_t) \, dt.
  \end{align*}
  Since $G'$ is a polynomial,
  \begin{align*}
    \int_0^{S} \langle \sexp( -(l \shuffle l){\color{blue} 1}), \hat{\mathbb{X}}^{\leq N}_{0,t} \rangle G'(X_t) \, d X_t &= \int_0^{S} \langle \sexp( -(l \shuffle l){\color{blue} 1}), \hat{\mathbb{X}}^{\leq N}_{0,t} \rangle \langle G^{' \shuffle}({\color{blue} 2}), \hat{\mathbb{X}}^{< \infty}_{0,t} \rangle \, d X_t \\
    &=  \int_0^{S} \langle \pi_{\leq N} (\sexp( -(l \shuffle l){\color{blue} 1})) \shuffle  G^{' \shuffle}({\color{blue} 2}), \hat{\mathbb{X}}^{< \infty}_{0,t} \rangle \, d X_t \\
    &=  \int_0^{S} \langle \pi_{\leq N + \operatorname{deg}(G')} (\sexp( -(l \shuffle l){\color{blue} 1}) \shuffle  G^{' \shuffle}({\color{blue} 2})), \hat{\mathbb{X}}^{< \infty}_{0,t} \rangle \, d X_t \\
    &= \langle (\sexp( -(l \shuffle l){\color{blue} 1}) \shuffle  G^{' \shuffle}({\color{blue} 2})) {\color{blue} 2}, \hat{\mathbb{X}}^{\leq N + \operatorname{deg}(G') + 1}_{0,S} \rangle.
  \end{align*}
  Similarly, since $V$ is a polynomial,
  \begin{align*}
   \int_0^{S} \langle \sexp( -(l \shuffle l){\color{blue} 1}), \hat{\mathbb{X}}^{\leq N}_{0,t} \rangle L(X_t) \, dt = \langle (\sexp( -(l \shuffle l){\color{blue} 1}) \shuffle  L^{\shuffle}({\color{blue} 2})) {\color{blue} 1}, \hat{\mathbb{X}}^{\leq N + \operatorname{deg}(V) + 1}_{0,S} \rangle.
  \end{align*}
  Taking expectation, we obtain
  \begin{align*}
   \E \left[\int_0^{S} \langle \sexp( -(l \shuffle l){\color{blue} 1}), \hat{\mathbb{X}}^{\leq N}_{0,t} \rangle \, d Y_t \right] &= \langle (\sexp( -(l \shuffle l){\color{blue} 1}) \shuffle  G^{' \shuffle}({\color{blue} 2})) {\color{blue} 2}, \E[\hat{\mathbb{X}}^{\leq N + \operatorname{deg}(G') + 1}_{0,S}] \rangle \\
   &\quad + \langle (\sexp( -(l \shuffle l){\color{blue} 1}) \shuffle  L^{\shuffle}({\color{blue} 2})) {\color{blue} 1}, \E[\hat{\mathbb{X}}^{\leq N + \operatorname{deg}(V) + 1}_{0,S}] \rangle.
  \end{align*}
  Using Theorem \ref{thr:main_approx}, we can deduce the result.
\end{proof}

\begin{remark} Once a Monte Carlo simulation of the expected signature of the rough path is generated, the optimization problems corresponding to the suprema in Corollary~\ref{cor:opt-stopping-expected-sig} are \emph{purely deterministic}.
Indeed, the objective function that needs to be maximized is a polynomial in the coefficients of the word $l$.
In Section \ref{sec:linear_numerics} we discuss the  implementation details for this step.
\end{remark}
\begin{remark}
When the payoff process $Y$ is a martingale then it follows from Doob's optional sampling theorem that optimal stopping value is zero.
It is instructive to demonstrate how this fact can be observed in the form of the optional stopping problem in Corollary~\ref{cor:opt-stopping-expected-sig}.
When $Y$ is a continuous martingale then it also admits a lift to a geometric rough path and therefore we may assume that $X=Y$.
Now let $w$ be an arbitrary word, then it holds
\begin{align*}
\langle w\blue{12}, \hat\X^{<\infty}_{0, t}\rangle = \int_0^{t} \left(\int_0^{s}\langle w, \hat\X^{<\infty}_{0, u} \rangle du\right) \circ dX_s = \int_0^{t} \left(\int_0^{s}\langle w, \hat\X^{<\infty}_{0, u} \rangle du \right) dX_s,
\end{align*}
where we have used that the Stratonovich-Itō correction is zero due to the finite variation of the integrand.
By the martingale property of the integral in the above right hand side it follows that
 $\langle w\blue{12}, \E[\hat\X^{\le N}_{0, S}]\rangle = 0$ for all $N\ge 1$.
Therefore, we see that all terms in the supremum in the right hand side of \eqref{eqn:approx_stopping} vanish.
\end{remark}

\begin{remark}\label{rem:american-option-on-signature}
  Note that similar formulas are available whenever $Y$ is roughly given as a polynomial of the signature. We restrict ourselves to a representative class of examples below.
  We note here that payoffs of American options usually cannot be exactly represented in such a way. In particular, for the standard American (put) option, we have $Y_t = (K - X_t)_+$ for some $K > 0$, where $X$ denotes the underlying asset price process.
  If we want to price American options using signature stopping policies, we have two possible remedies. We can approximate the payoff function by polynomials, which would allow us to directly apply Corollary~\ref{cor:opt-stopping-expected-sig}.
  Alternatively, we can attach $Y$ to the path $X$, i.e., consider $\tilde{X}_t \equiv (t, X_t, Y_t)$. Then, the corollary applies trivially, but at the price of increasing the dimension of the state space. The same strategy also works for more complicated functionals of the rough path $\hat{\mathbb{X}}$. For instance, $Y$ can be of the form $Y_t = g(t,\tilde{Y}_t)$ where $\tilde{Y}$ solves a rough differential equation
  \begin{align*}
   d \tilde{Y}_t = b(\tilde{Y}_t)\, dt + \sigma(\tilde{Y}_t)\, d \mathbb{X}_t.
  \end{align*}
  If $g$ is sufficiently smooth, $Y$ is \emph{controlled by $\mathbb{X}$} (cf. \cite{FH14}) which guarantees that $\tilde{X}_t \equiv (t, X_t, Y_t)$ can be lifted to a rough paths valued process.
\end{remark}

\section{Deep signature stopping policies}\label{sec:non-linear-rules}
In the two preceding sections we focused on stopping policies $\theta$ that are linear functionals of the signature.
Proposition~\ref{prop:opt_sig_pol} shows that this class of policies is sufficient to approximate a optimal stopping policy.
Regarding numerical approximation and in order to obtain reasonable results it might however be necessary to step deeply into the signature, that is to approximate the signature upto a high truncation level.
In this section we propose a different model for the stopping policy, in which the signature serves as an input feature to a deep neural network.
Proposition~\ref{lem:regularized_value} from above is the key link for defining a smooth loss-function for these models, which will be discussed in more detail in Section \ref{sec:non-linear-loss}.
In this framework, the shuffle property of the signature is not directly needed and therefore it is reasonable to replace the signature by the log-signature, which allows for a low dimensional representation as explained below.

We will again assume throughout the section that $\X\in\Omega^{p}_T$ and that $Y = (Y_t)_{0 \le t \le T}$ is a real-valued continuous process adapted to the filtration generated by $\X$.
\begin{definition}[Log-signature]
We define the log-signature $\L^{<\infty}: [0,T] \to T((V))$ by
\begin{align*}
\L^{<\infty}_{t} = \tlog(\X^{<\infty}_t), \quad 0 \le t \le T,
\end{align*}
where $\log_\otimes$ is the tensor logarithm defined by
\begin{align*}
\tlog(\mathbf{1}+ \mathbf{a}) = \sum_{k =0}^\infty \frac{(-1)^{k+1}}{k}\mathbf{a}^{\otimes k} \in T((V)), \quad \mathbf{a} \in T((V)), \; \langle \varnothing, \mathbf{a} \rangle = 0.
\end{align*}
\end{definition}
We will also write $\hat\L^{<\infty} = \tlog(\hat\X^{<\infty})$ for the log-signature of the time-augmented path $\hat\X$.
Recall that the signature is an element of the group $G(V)$.
Therefore, the log-signature takes values in the set $$\mathfrak{g}(V) := \tlog(G(V)) = \left\{\tlog(\mathbf{g}) \;\vert\; \mathbf{g} \in G(V)\right\} \subset T((V)),$$ which forms a Lie-algebra with the bracket given by the commutator
\begin{align*}
[\mathbf{a}, \mathbf{b}] = \mathbf{a}\otimes\mathbf{b} - \mathbf{b}\otimes\mathbf{a}.
\end{align*}
In fact, $\mathfrak{g}(V)$ is the free Lie-algebra over $V$ (see e.g. \cite[Section 7.3]{FV10}) and therefore the log-signature can be expressed in terms of iterated Lie brackets of the basis elements $\{ e_1, \cdots, e_d\}$, i.e. there exists explicit coefficients $(\lambda_{i_1, \dots, i_n}(t))$ such that
\begin{align*}
\L^{<\infty}_{0, t} = \sum_{n =1}^{\infty} \sum_{i_1, \dots, i_n =1}^{d} \lambda_{i_1, \dots, i_n}(t)[e_{i_1}, [ e_{i_2} , [ \dots, [e_{i_{n-1}}, e_{i_n}] \dots ]].
\end{align*}
This result already goes back to \cite{chen1957integration}.
However, the above representation is not very efficient, due to linear dependencies (e.g. $[e_1, e_2] = - [e_2, e_1]$).
An efficient basis is given by the \emph{Lyndon basis} or more generally by a \emph{Hall basis}. 
We refer to \cite{R93} for a construction of such a basis.
Define $$\mathfrak{g}^N(V) = \pi_{\le N}(\mathfrak{g}(V)) = \tlog(G^{N}(V)),$$ the free step-$N$ nilpotent Lie-algebra, then we have the following
\begin{proposition}
For $N \in \{1, 2, \dots\}$ the dimension of $\mathfrak{g}^N(\R^{d})$ as a linear vector space is given by
\begin{align}\label{eq:def_eta}
\eta_{d, N} \coloneqq \sum_{n=1}^{N} \frac{1}{n}\sum_{k|n}\mu(n/k)d^{k},
\end{align}
where the inner sum is taken over all divisors $k$ of $n$ and $\mu$ is the Möbius function.
\end{proposition}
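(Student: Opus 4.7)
The formula $\eta_{d,N}$ is the classical Witt dimension formula for the free Lie algebra, so the plan is to reduce the proposition to this classical identity. The first step is to observe that the free Lie algebra $\mathfrak{g}(V) \subset T((V))$ is graded by tensor degree, i.e.\ $\mathfrak{g}(V) = \bigoplus_{n \ge 1} \mathfrak{g}_n(V)$ with $\mathfrak{g}_n(V) \subset V^{\otimes n}$; indeed this is immediate from the fact that the bracket $[\cdot,\cdot]$ is homogeneous of degree $0$ with respect to the tensor grading and $\mathfrak{g}(V)$ is generated by $V = V^{\otimes 1}$. Since $\pi_{\le N}$ kills components of degree $> N$ and is the identity on components of degree $\le N$, I would conclude
\begin{equation*}
\dim \mathfrak{g}^N(\R^d) \;=\; \sum_{n=1}^{N} \dim \mathfrak{g}_n(\R^d),
\end{equation*}
so it suffices to prove that $\dim \mathfrak{g}_n(\R^d) = \tfrac{1}{n}\sum_{k \mid n} \mu(n/k) d^k$.

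Next I would invoke the Poincaré--Birkhoff--Witt theorem, which identifies $T(V)$ with the universal enveloping algebra $U(\mathfrak{g}(V))$ as graded vector spaces. Choosing a homogeneous basis of $\mathfrak{g}(V)$ compatible with the grading (for instance a Hall or Lyndon basis), PBW gives an isomorphism of graded vector spaces that translates, on the level of Hilbert/Poincaré series, into the identity
\begin{equation*}
\sum_{n \ge 0} (\dim V^{\otimes n})\, t^n \;=\; \prod_{n \ge 1} (1-t^n)^{-\dim \mathfrak{g}_n(V)}.
\end{equation*}
With $V = \R^d$ the left-hand side equals $1/(1-dt)$. Setting $\ell_n := \dim \mathfrak{g}_n(\R^d)$, taking logarithms and expanding $-\log(1-x) = \sum_{k\ge 1} x^k/k$ converts this into
\begin{equation*}
\sum_{m \ge 1} \frac{d^m}{m}\, t^m \;=\; \sum_{n \ge 1} \ell_n \sum_{k \ge 1} \frac{t^{nk}}{k}.
\end{equation*}
Matching coefficients of $t^m$ yields $d^m = \sum_{n \mid m} n\, \ell_n$, and a standard Möbius inversion then gives $\ell_n = \tfrac{1}{n}\sum_{k \mid n} \mu(n/k) d^k$. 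Summing over $n = 1,\ldots,N$ produces exactly $\eta_{d,N}$.

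The main obstacle is not really combinatorial---once one has the grading and PBW, Möbius inversion is routine---but rather the invocation of PBW in the precise form needed here, since $\mathfrak{g}(V)$ is defined as a Lie subalgebra of the associative algebra $T((V))$ rather than abstractly. One therefore needs to verify that the obvious associative-algebra map $U(\mathfrak{g}(V)) \to T(V)$ is an isomorphism of graded vector spaces; this is the classical theorem of Friedrichs/PBW characterising $\mathfrak{g}(V)$ as exactly the primitive (Lie) elements of the shuffle Hopf algebra, and I would simply quote it from a standard reference such as \cite{R93} or \cite[Chapter 7]{FV10}. With that input the proof is essentially a generating-function computation.
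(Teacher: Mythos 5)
Your proof is correct, but it takes a genuinely different (and much more self-contained) route than the paper. The paper disposes of the proposition in one line by citing the dimensionality of a Hall basis of the free Lie algebra, namely \cite[Theorem 6]{R93}, which already contains Witt's formula; no computation is carried out. You instead \emph{derive} Witt's formula: grading of the free Lie algebra by tensor degree, the PBW identification $U(\mathfrak{g}(V)) \cong T(V)$, the resulting Hilbert-series identity $(1-dt)^{-1} = \prod_{n\ge1}(1-t^n)^{-\ell_n}$, and Möbius inversion. The computation checks out ($d^m = \sum_{n\mid m} n\,\ell_n$ is exactly the necklace-counting identity, and inversion gives $\ell_n = \tfrac1n\sum_{k\mid n}\mu(n/k)d^k$), and you correctly flag the one genuinely non-routine input, namely that $\mathfrak{g}(V)$ as defined in the paper --- the image $\log^{\otimes}(G(V))$ sitting inside $T((V))$ --- really is the free Lie algebra over $V$, so that PBW applies in the stated form; the paper itself also outsources this identification to \cite[Section 7.3]{FV10}. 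Two cosmetic points: since elements of $\mathfrak{g}(V)\subset T((V))$ are infinite series, the decomposition $\bigoplus_{n\ge1}\mathfrak{g}_n(V)$ should strictly be a completed product, though this is immaterial for computing $\dim \pi_{\le N}(\mathfrak{g}(V)) = \sum_{n=1}^N \dim\mathfrak{g}_n(V)$; and your argument ultimately reproves the content of the very theorem the paper cites, so it buys self-containedness and transparency at the cost of length, whereas the paper's citation buys brevity at the cost of opacity.
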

\begin{proof}
The statement directly follows from the existence of a Hall basis of the free Lie-algebra and its dimensionality, which can be found in \cite[Theorem 6]{R93}.
\end{proof}

\begin{table}
\renewcommand{\arraystretch}{1.1}
\begin{tabular}{c | c | c | c | c | c | c | c | c }
  & \multicolumn{2}{c|}{$d=1$} &  \multicolumn{2}{c|}{$d=2$} &  \multicolumn{2}{c|}{$d=3$} &  \multicolumn{2}{c}{$d=4$} \\
  & $\sigma_{d,N}$ & $\eta_{d,N}$ & $\sigma_{d,N}$ & $\eta_{d,N}$ & $\sigma_{d,N}$ & $\eta_{d,N}$ & $\sigma_{d,N}$ & $\eta_{d,N}$  \\\hline
$N = 1$ & 1 & 1 & 2		& 2		& 3		& 3		& 4		& 4		\\
$N = 2$ & 2 & 1 & 6		& 3		& 12		& 6		& 20		& 10		\\
$N = 3$ & 3 & 1 & 14		& 5		& 39		& 14		& 84		& 30		\\
$N = 4$ & 4 & 1 & 30		& 8		& 120	& 32		& 340	& 90		\\
$N = 5$ & 5 & 1 & 62		& 14		& 363	& 80		& 1364& 294	\\
$N = 6$ & 6 & 1 & 126	& 23		& 1092& 196	& 5460& 964	\\
\end{tabular}
\vspace{0.5em}
\caption{Comparisons of the dimensionality of the representations of the truncated signature $\sigma_{d,N}$ and truncated log-signature $\eta_{d,N}$. (Values taken from \cite{reizenstein2017calculation}.)} \label{tab:dimensions}
\end{table}

Note that dimension of the truncated tensor algebra $T^{N}(\R^{d})$ is given by $ (d^{N+1} - 1)/(d-1)$.
Since the truncated signature always starts with $\mathbf{1}$, its representation in the linear space is effectively of dimension $\sigma_{d,N} = (d^{N+1} - d)/(d-1)$.
In Table \ref{tab:dimensions} we compare the values $\sigma_{d,N}$ and $\eta_{d,N}$ for different values of $d$ and $N$.
While both values have the same asymptotic growth when $N \to \infty$, the efficient representation of the log-signature allows for numerical tractability up to higher truncations levels $N$.
Finally note that the Baker-Campbell-Hausdorff formula, and therefore the log-signature of a piecewise linear path, can be directly calculated in a Hall basis. 
For an algorithm see for instance \cite{casas2008efficient} and the summarizing note \cite{reizenstein2017calculation}.

We are now ready to present our class of deep signature stopping policies.
\begin{definition}
Define $\mathcal{T}_{\log} \subset \mathcal{T}$ to be the set of continuous stopping policies $\theta: \Lambda_T \to \R$ of the form 
\begin{align*}
\theta(\hat\X\vert_{[0,t]}) = \left(\theta_{\log} \circ \tlog \right) (\hat\X_{0,t}^{\le N}),
\end{align*}
for some $N \in \{1, 2, \dots\}$ and where $\theta_{\log}: \mathfrak{g}^{N}(\R^{d+1}) \to \R$ is a deep neural network of the form
\begin{align}\label{eq:networks}
\theta_{\log} = A_0 \circ \varphi \circ A_1 \circ \varphi \circ \cdots \circ A_I,
\end{align}
where $A_I: \mathfrak{g}^{N}(\R^{d+1}) \to \R^{q}$, $A_i: \R^{q} \to \R^{q}$, $1 \le i < I$, $A_0: \R^{q}\to\R$, are affine maps for some $q, I \in \{1, 2, \dots\}$ and $\varphi$ is an activation function, i.e. continuous and not a polynomial (e.g. $\varphi(x)_i = \max\{x_i, 0\}$ for $1 \le i \le q$ and $x\in\R^{q}$).
\end{definition}

The following proposition, which is a consequence of  Proposition \ref{prop:opt_cont_pol}, Proposition \ref{prop:opt_sig_pol} and the universal approximation theorem for neural networks, states that the optimization over stopping policies in the class $\mathcal{T}_{\log}$ is sufficient.

\begin{proposition}\label{prop:log_signature-stopping}
 Given $\E[\| Y \|_{\infty}] < \infty$, we have
 \begin{align*}
  \sup_{ \theta \in \mathcal{T}_{\log} } \E[Y_{\tau_\theta^{r} \wedge T}] = \sup_{\tau \in \mathcal{S}} \E[Y_{\tau \wedge T}].
 \end{align*}
\end{proposition}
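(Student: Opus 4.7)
The plan is to establish the two inequalities separately, one of which is essentially free and the other of which mirrors the proof of Proposition~\ref{prop:opt_sig_pol} with the universal approximation theorem for neural networks replacing the Stone--Weierstrass step (Lemma~\ref{lemma:sig_dense}).

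For the easy direction $\sup_{\theta \in \mathcal{T}_{\log}} \E[Y_{\tau^r_\theta \wedge T}] \le \sup_{\tau \in \mathcal{S}} \E[Y_{\tau \wedge T}]$, I would simply invoke the conditioning argument already used at the end of the proof of Proposition~\ref{prop:opt_cont_pol}. For any $\theta \in \mathcal{T}_{\log} \subset \mathcal{T}$, conditioning on $\hat{\mathbb{X}}$ writes $\E[Y_{\tau^r_\theta \wedge T}\mid\hat{\mathbb{X}}] = \int_0^\infty Y_{\tau_z \wedge T}\,\P_Z(dz)$ as a mixture of expectations over classical $(\mathcal{F}_t)$-stopping times $\tau_z$, each bounded by the right-hand side; Fubini closes the bound. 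So the entire content of the proposition is the reverse inequality.

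For the converse, I would argue via Proposition~\ref{prop:opt_sig_pol} that it suffices to approximate an arbitrary linear signature policy $\theta_l \in \mathcal{T}_{\mathrm{sig}}$ by elements of $\mathcal{T}_{\log}$ in the localized uniform sense of Lemma~\ref{lemma:sig_dense}. Fix $l \in T((\R^{1+d})^*)$ and set $N = \deg(l)$. Since the tensor logarithm is a continuous bijection $\tlog: G^N(\R^{1+d}) \to \mathfrak{g}^N(\R^{1+d})$ with continuous inverse $\texp$, the map
\begin{align*}
  f: \mathfrak{g}^N(\R^{1+d}) \to \R, \qquad f(\mathbf{y}) \coloneqq \langle l, \texp(\mathbf{y})\rangle
\end{align*}
is continuous and satisfies $\theta_l(\hat{\mathbb{X}}|_{[0,t]}) = f\bigl(\hat{\mathbb{L}}^{\le N}_{0,t}\bigr)$. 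Choosing $\mathcal{K} \subset \hat{\Omega}_T^p$ compact with $\P(\hat{\mathbb{X}} \in \mathcal{K}) \ge 1-\varepsilon$ as in Lemma~\ref{lemma:sig_dense}, the set $\bigl\{\hat{\mathbb{L}}^{\le N}_{0,t} : \hat{\mathbb{X}} \in \mathcal{K},\ t\in[0,T]\bigr\}$ is the continuous image of a compact set, hence compact in $\mathfrak{g}^N(\R^{1+d})$. Identifying this finite-dimensional Lie algebra with $\R^{\eta_{d+1,N}}$ via a Hall basis, the universal approximation theorem (in the sharp form of Leshno--Lin--Pinkus--Schocken, covering any non-polynomial continuous activation) produces networks $\theta_{\log,n}$ of the form~\eqref{eq:networks} with $\theta_{\log,n} \to f$ uniformly on this compact set. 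Setting $\theta_n \coloneqq \theta_{\log,n} \circ \tlog \in \mathcal{T}_{\log}$ yields an approximating sequence satisfying $\sup_{\hat{\mathbb{X}}\in\mathcal{K},\, t\in[0,T]} |\theta_n(\hat{\mathbb{X}}|_{[0,t]}) - \theta_l(\hat{\mathbb{X}}|_{[0,t]})| \to 0$.

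With this pointwise-uniform approximation, the passage to $\E[Y_{\tau^r_{\theta_n} \wedge T}] \to \E[Y_{\tau^r_{\theta_l} \wedge T}]$ is a verbatim repetition of the dominated-convergence calculation in the last part of the proof of Proposition~\ref{prop:opt_sig_pol}: uniform convergence of $\theta_n^2$ to $\theta_l^2$ on $\mathcal{K}$ combined with uniform continuity of $F_Z(z) = 1 - e^{-z}$ on compact intervals gives convergence of the conditional expectation on $\{\hat{\mathbb{X}} \in \mathcal{K}\}$, while the complement contribution is dominated by $2\,\E[\|Y\|_\infty;\mathcal{K}^c]$ and made small by choice of $\mathcal{K}$. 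I do not expect a genuine obstacle here; the only subtle point is checking that the universal approximation theorem is applicable with domain $\mathfrak{g}^N(\R^{1+d})$, but this is immediate once one fixes a Hall basis and works on the compact image in $\R^{\eta_{d+1,N}}$.
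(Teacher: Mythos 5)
Your proposal is correct and follows essentially the same route as the paper: reduce to approximating a linear signature policy $\theta_l$ uniformly on a compact set of high probability, transport the problem to the finite-dimensional Lie algebra via $\texp\circ\tlog$, invoke the Leshno--Lin--Pinkus--Schocken universal approximation theorem on the compact image, and then repeat the $F_Z$-continuity/dominated-convergence argument from Proposition~\ref{prop:opt_sig_pol}. The only (immaterial) differences are that you spell out the easy inequality explicitly where the paper absorbs it into the inclusion $\mathcal{T}_{\log}\subset\mathcal{T}$, and that the paper takes $N=\max\{\deg(l),\lfloor p\rfloor\}$ rather than $N=\deg(l)$.
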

\begin{proof}
Since $\mathcal{T}_{\log} \subset \mathcal{T}$ it follows from Proposition \ref{prop:opt_cont_pol} and \ref{prop:opt_sig_pol} that it suffices to show that
 \begin{align*}
  \sup_{\theta \in \mathcal{T}_{\mathrm{sig}}} \E[Y_{\tau_\theta^{r} \wedge T}] \le \sup_{ \theta \in \mathcal{T}_{\log} } \E[Y_{\tau_\theta^{r} \wedge T}].
\end{align*}
Therefore, let $\theta \in \mathcal{T}_{\mathrm{sig}}$, i.e. there exists some $l \in T((\R^{d+1})^{\ast})$ such that $$\theta(\X\vert_{[0,t]}) = \langle l, \hat\X^{<\infty}_{0,t} \rangle = \langle l, \hat\X^{\le N}_{0,t} \rangle,$$
where $N := \max\{\deg(l), [p]\}$.
We equip $G^{N}(\R^{d+1}) \subset T^{N}(\R^{d+1})$ with the subspace topology.
Note that the inverse of the tensor logarithm $\tlog$ is the tensor exponential map $\texp$, which is defined analogously by its power series expansion (see also Example~\ref{exmpl:brownian-motion}).
Now define the continuous map 
$$\psi: \mathfrak{g}^{N}(\R^{d+1}) \to \R, \quad \mathbf{x} \mapsto \langle l, \texp(\mathbf{x})\rangle$$
Let $K \subset G^{N}(\R^{d+1})$ be an arbitrary compact set and define $K^{\prime} \coloneqq \tlog(K) \subset \mathfrak{g}^{N}(\R^{d+1})$, which is also compact.
From \cite[Theorem 1]{leshno1992multilayer} it follows that there exists as sequence of functions $(\psi_{n})_{n\ge1}$ of the form \eqref{eq:networks} that converge uniformly on $K^{\prime}$ towards $\psi$.
Hence, it also follows that the sequence of maps $(\psi_n\circ\tlog)_{n \ge 1}$ converge uniformly on $K$ towards the map $\langle l, \cdot \rangle: G^{N}(\R^{d+1}) \to \R$.

Next, note that the Lyons-lift map 
$$\Lambda_T \to G^{N}(\R^{d+1}), \quad \hat\X\vert_{[0,t]} \mapsto \hat\X
_{0,t}^{\le N}$$
is continuous (see \cite[Section 9.1]{FV10}) and therefore it follows from above that for any compact set $\mathcal{K} \subset \hat\Omega_T^{p}$, there exists a sequence $(\theta_n)_{n=1}^{\infty} \subset \mathcal{T}_{\log}$ such that
\begin{align*}
\lim_{n \to \infty}\sup_{\X\in\mathcal{K}, t \in [0,t]}| \theta_n(\X\vert_{[0,t]}) - \theta(\X\vert_{[0,t]})| = 0.
\end{align*}
The rest of the proof is now completely analogous to proof of Proposition \ref{prop:opt_sig_pol}.
\end{proof}

\section{Numerical Examples}\label{sec:numerics}
In this section we are going to evaluate the performance of the signature stopping methodology for the numerical approximation of the optimal stopping value.
We have particularly chosen examples in which the underlying rough path is non-Markov.
The technical steps that are necessary to implement a Monte-Carlo simulation are discussed in detail in Appendix \ref{sec:implmentation_details}.

\subsection{Optimal stopping of fractional Brownian motion}
Let $X^{H}$ be a fractional Brownian motion with Hurst parameter $H\in (0,1]$ and recall from Example~\ref{exmpl:fbm} 
that the time augmented process $\hat{X}^{H}$ has a lift to a geometric rough path.
Also recall that in the case $H = 0.5$ the process $X^{H}$ is a standard Brownian motion and otherwise it is not a semimartingale and not a Markov process.
We are considering the optimal stopping problem with the payoff given by the underlying process itself, i.e. $Y\equiv X^{H}$.
The payoff clearly satisfies the condition $\E(\Vert Y\Vert_{\infty; [0,T]})<\infty$ for any $T<\infty$ and therefore our approximation results apply.
We set $T=1$ and use a time discretization with $J=100$ steps for Monte-Carlo simulation of the payoff and the signature (see Section~\ref{sec:time_discretization} for more detail).

This example was recently studied in \cite{becker2019deep} and we will use the there presented values as benchmarks.
The methodology in \cite{becker2019deep} was developed for Markov processes and is based on a parametrization of the optimal stopping policy by deep neural networks, one for each time in the discretization grid.
The networks are then trained in a backwards induction using the dynamic programming principle.
The methodology was applied to the fractional Brownian motion example by lifting the time-discretized process to a $100$-dimensional Markov process including the entire past of the process.

\textbf{Theoretical observations.} Before evaluating the numerical results we present a few theoretical observations about the value of the optimal optimal stopping problem of the fractional Brownian motion.

In the case $H=0.5$, the payoff process $Y$ is a standard Brownian motion and hence a martingale. 
Therefore it follows from Doob's optional sampling theorem that the true value of the optimal stopping problem is zero.

The inclusion of the case $H=1$ in the definition of the fractional Brownian motion is a matter of convention, in any case, the right-hand side of \eqref{eq:cov_fbm}  defines the positive semidefinite covariance kernel $\rho_1(s,t) =  s\cdot t$ for $s,t \ge 0$.
The process $X := (t \cdot \xi)_{t \ge 0}$ with a standard normal distributed random variable $\xi$ admits this covariance structure. 
Since the sample paths of this process are given by straight lines, one can easily verify that an optimizing sequence of stopping times $(\tau_n)_{n\ge 1}$ is given by
\begin{align*}
\tau_n = \left\{
\begin{array}{cc}
T, & X_{1/n} > 0 \\
1/n, & X_{1/n} \le 0
\end{array}
\right.,\quad  n = 0, 1, 2, \dots
\end{align*}
The optimal stopping value for $H=1$ is therefore given by $\lim_{n\to \infty}\E[Y_{\tau_n}] = \E[T\cdot(\xi)_+] = T/ \sqrt{2\pi}$.
Furthermore, we also see that the value of the time discretized problem is given by $\E[Y_{\tau_{J}}] = 0.99 / \sqrt{2\pi} \approx 0.395$ for $J=100$ and $T=1$.
We point out that this discussion of the case $H=1$ was already presented in \cite{becker2019deep}.

On the other side of the spectrum, that is when $H$ approaches $0$, the covariance kernel of the fractional Brownian motion \eqref{eq:cov_fbm} converges pointwise to the following kernel
\begin{align}\label{eq:H0covariance}
\rho_0(s, t) = \left\{\begin{array}{cc}
0, & t= 0 \vee s = 0.\\
1, & t = s\wedge s > 0,\\
\frac{1}{2}, & \text{else.}
\end{array}
\right.,\quad  t,s \ge 0.
\end{align}
The Kolmogorov extension theorem implies that a Gaussian process with this covariance structure exist, however such a process is not measurable in the usual sense of stochastic processes (see \cite[Section 19.5]{stoyanov2014counterexamples}) and therefore the associated optimal stopping problem is not well defined.
See also \cite{fyodrov2016fractional}, \cite{neuman2018fractional}, \cite{hager2020multiplicative} and \cite{bayer2020log} for regularizations and normalizations of the fractional Brownian motion and the convergence as $H$ approaches $0$.
For the comparison with numerical approximations it is nevertheless interesting to study the discrete time optimal stopping problem of a process with the covariance structure $\rho_0$.
Let $(\xi_{0}, \xi_1, \dots, \xi_J)$ be a finite sequence of independent standard normal distributed random variables and note that the process $$\tilde{Y}_{j} :=(\xi_j - \xi_0)/\sqrt{2}, \quad j= 0,1,\dots, J,$$ has the covariance structure $$\E\big[\tilde{Y}_{i} \tilde{Y}_{j}\big] = \rho_0(i, j) =\rho_0(t_i, t_j), \quad i,j \in\{ 0, 1, \dots, J\}, \;\; t_j  = Tj/J.$$
In Appendix \ref{sec:H0calcuation}  we present an explicit solution to the discrete time optimal stopping problem associated to the process $\tilde Y$.
For $J=100$ the optimal stopping value is approximately given by $1.5830$ and for $J\to \infty$ the value converges to infinity (see Proposition~\ref{prop:H0} and Remark~\ref{rem:H0Jinfty}).
At this point, let us also mention that the value of the discrete time stopping problem depends continuously on the Hurst parameter $H\in[0,1]$.
Indeed, one has to verify that the distribution of the process $(X^{H}_{t_j})_{j = 0, 1, \dots, J}$ depends continuously on $H$ in the \emph{weak adapted topology} (see \cite{backhoff2020all}).
However, this is a consequence of the fact that the process is Gaussian and the densities of the law (receptively conditional laws) are everywhere continuous in $H$.
Since the value of the discrete time optimal stopping problem is a lower-bound to the value of the continuous time problem we have the following

\begin{proposition}\label{prop:Hto0} The value of the optimal stopping problem of the fractional Brownian motion $(X^{H}_t)_{0 \le t \le T}$ diverges as $H\to0$, more precisely we have
\begin{align*}
\lim_{H\to 0}\sup_{\tau \in \mathcal{S}}\E\left[X^{H}_{\tau \wedge T} \right] = \infty.
\end{align*}
\end{proposition}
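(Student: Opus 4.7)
The proof will assemble three ingredients that have already been established (or essentially established) in the surrounding discussion, so the task is mainly to package them cleanly.

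The plan is to bound the continuous-time value from below by the discrete-time optimal stopping value on the grid $\{t_j = Tj/J : j=0,\dots,J\}$, and then push the latter down to $H=0$ using weak-adapted continuity in $H$. More precisely, for fixed $J$, let
\[
V_J(H) \coloneqq \sup_{\tau \in \mathcal{S}_J} \E\!\left[X^H_{\tau}\right],
\]
where $\mathcal{S}_J$ denotes stopping times with values in $\{t_0,\dots,t_J\}$ with respect to the filtration generated by the sampled path. Trivially $\sup_{\tau \in \mathcal{S}}\E[X^H_{\tau\wedge T}] \ge V_J(H)$ for every $J$ and $H$. So it suffices to prove that $\liminf_{H \to 0} V_J(H) = V_J(0)$ for each $J$, and then to invoke the explicit $H=0$ computation.

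First I would verify the continuity statement $V_J(H) \to V_J(0)$ as $H \to 0$. The joint law of $(X^H_{t_0},\dots,X^H_{t_J})$ is Gaussian with covariance $\rho_H(t_i,t_j)=\tfrac12(t_i^{2H}+t_j^{2H}-|t_i-t_j|^{2H})$ (with the obvious degeneracy at $H=0$ replaced by $\rho_0$ from \eqref{eq:H0covariance}). For any $H\in(0,1]$ the covariance is strictly positive definite on $\{t_1,\dots,t_J\}$ (the $t_0=0$ coordinate contributes a deterministic $0$ which we can drop), so the conditional distributions needed to evaluate the Snell envelope via dynamic programming are all Gaussian with continuously parametrized means and covariances. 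Since the reward at each step is just $x \mapsto x$ (integrable under every Gaussian), one can read off by backward induction that each layer of the dynamic programming value function is a continuous function of the covariance matrix and hence of $H$ on $(0,1]$; continuity up to $H=0$ follows because the Gaussian vector $(X^H_{t_1},\dots,X^H_{t_J})$ converges in law to $\tilde Y \sim \mathcal{N}(0,\rho_0)$ and in fact the conditional densities converge pointwise. This is essentially the weak-adapted continuity argument alluded to in the paragraph preceding the proposition.

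Second, I would invoke the explicit lower bound proved in Appendix~\ref{sec:H0calcuation} (in the form promised by Proposition~\ref{prop:H0} and Remark~\ref{rem:H0Jinfty}): $V_J(0) \to \infty$ as $J \to \infty$. Given any $M>0$, choose $J$ so large that $V_J(0) > 2M$, then by the continuity just shown, choose $\delta>0$ so that $V_J(H) > M$ for all $H \in (0,\delta)$. For such $H$,
\[
\sup_{\tau \in \mathcal{S}} \E\!\left[X^H_{\tau \wedge T}\right] \;\ge\; V_J(H) \;>\; M,
\]
which is the claim since $M$ was arbitrary.

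The main obstacle is the continuity step $V_J(H)\to V_J(0)$, because at $H=0$ the Gaussian vector is degenerate (all increments have variance one but are pairwise $\tfrac12$-correlated through the common $\xi_0$), so one cannot just cite continuity of covariance matrices inside a nondegenerate parametric family. I would handle this either by writing the dynamic programming recursion in terms of the innovations $(\xi_0,\xi_1,\dots,\xi_J)$ whose joint law is continuous in $H$ in total variation (via continuity of Gaussian densities in their parameters), or directly by the weak-adapted-topology argument sketched above, noting that $x\mapsto x$ is continuous and uniformly integrable in $H$ (since $\E|X^H_{t_j}|^2 = t_j^{2H} \le T^{2H\vee 0}$ is bounded on $H\in[0,1]$).
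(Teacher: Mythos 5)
Your proposal is correct and follows essentially the same route as the paper: lower-bound the continuous-time value by the discrete-time value on the grid, use continuity of the discrete value in $H$ (via the weak adapted topology / continuity of the Gaussian conditional laws, exactly as in the paragraph preceding the proposition), and conclude with the divergence $V_J(0)\to\infty$ from Proposition~\ref{prop:H0} and Remark~\ref{rem:H0Jinfty}. Your treatment of the degenerate covariance at $H=0$ is somewhat more explicit than the paper's sketch, but it is the same argument.
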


\begin{table}
  \begin{tabular}{c || c | c | c || c|c|c|c|c || c | c}
 	& \multicolumn{3}{c||}{lin. signature policies} & \multicolumn{5}{c||}{deep signature policies}& \multicolumn{2}{c}{BCJ}\\
 $H$ &  $N = 3$ &  $N = 4$ &  $N = 5$ &  $N = 1$ &  $N = 2$ &  $N = 3$ &  $N = 4$ &  $N = 5$ & low     & up     \\\hline
 0.0 &    1.161 &    1.331 &    1.430 &    1.384 &    1.582 &    1.583 &    1.583 &    1.583 &    -    &     -  \\
 0.1 &    0.760 &    0.858 &    0.916 &    0.929 &    1.030 &    1.038 &    1.042 &    1.043 & $1.048$ & $1.049$\\
 0.2 &    0.468 &    0.523 &    0.553 &    0.584 &    0.637 &    0.646 &    0.649 &    0.651 & $0.658$ & $0.659$\\
 0.3 &    0.257 &    0.285 &    0.293 &    0.329 &    0.355 &    0.361 &    0.363 &    0.364 & $0.369$ & $0.380$\\
 0.4 &    0.107 &    0.116 &    0.118 &    0.140 &    0.149 &    0.152 &    0.152 &    0.153 & $0.155$ & $0.158$\\
 0.5 &    0.000 &    0.000 &    0.000 &    0.000 &    0.000 &    0.000 &    0.000 &    0.000 & $0.000$ & $0.005$\\
 0.6 &    0.081 &    0.084 &    0.085 &    0.106 &    0.112 &    0.113 &    0.114 &    0.114 & $0.115$ & $0.118$\\
 0.7 &    0.146 &    0.152 &    0.154 &    0.191 &    0.198 &    0.201 &    0.201 &    0.202 & $0.206$ & $0.207$\\
 0.8 &    0.199 &    0.207 &    0.208 &    0.263 &    0.269 &    0.272 &    0.273 &    0.273 & $0.276$ & $0.278$\\
 0.9 &    0.247 &    0.256 &    0.258 &    0.328 &    0.332 &    0.334 &    0.334 &    0.334 & $0.336$ & $0.339$\\
 1.0 &    0.296 &    0.307 &    0.309 &    0.395 &    0.395 &    0.395 &    0.395 &    0.395 & $0.395$ & $0.395$
 \end{tabular}
 \vspace{0.5em}
\caption{Low-biased estimates to the optimal stopping value of the fractional Brownian motion for different Hurst parameters $H$, obtained with linear ($\mathcal{T}_{\mathrm{sig}}$) and deep ($\mathcal{T}_{\log}$) signature stopping policies. $N$ is the truncation level of the log-/signature.
For the deep signature stopping policies we used neural networks with $I = 2$ hidden layers and $q= \eta_{2,N} +30 $ neurons.
The values in the two rightmost columns are the  low- and upper-biased estimates obtained in \cite{becker2019deep}.
The overall Monte-Carlo error in the resimulation is below $0.0004$.
The definition of the discrete time optimal stopping problem corresponding to the case $H=0.0$ was discussed in the theoretical observations above.}\label{tab:fbm_values}
\end{table}

\textbf{Evaluation of the numerical results.} We have approximated the value of the optimal stopping problem using both linear signature stopping policies ($\mathcal{T}_{\mathrm{sig}}$) and deep signature stopping policies ($\mathcal{T}_{\log}$), for Hurst parameters $H\in\{0.1, 0.2, \dots, 1.0\}$ and $H=0$ (in the sense of the theoretical observations above).
The resulting low-biased estimates are presented in Table~\ref{tab:fbm_values} (see Section~\ref{sec:lower-bounds} for a description of the calculation of low-biased estimates).
The values associated to the linear stopping policy presented in Table~\ref{tab:fbm_values} are obtained using a stochastic gradient decent method based one the form of the expected payoff given in Proposition~\ref{lem:regularized_value}.
The technical details for this procedure can be found in Section \ref{sec:non-linear-loss}.
We have used $2^{19}= 524,288$ samples for the optimization and $2^{23} = 8,388,608$ samples for the resimulation.
The values associated to the deep stopping policy were obtained using a network architecture consisting of $I=2$ hidden layers, $q = \eta_{2,N} + 30$ neurons for each layer, depending on the truncation level $N$ of the log-signature, and relu activation function.
The networks were trained with $2^{21}= 2,097,152$ samples and the loss-function presented in Section~\ref{sec:non-linear-loss}. For the resimulation we used again $2^{23}$ samples.

Comparing the values in Table 2, it is not very surprising that we observe a worse performance of the linear method compared with the non-linear method using the same truncation level.
The values obtained with the deep signature policies achieve the exact values for $H=1$ and $H=0$ (see the theoretical observations above)
and come reasonably close to he low-biased values presented \cite{becker2019deep}, already for a truncation level $N=2$.
This observation becomes more impressing when we recall from Tabel~\ref{tab:dimensions} that the dimension of the log-signature for $d=2$ and $N=2, 3, 4, 5$ is given by $\eta_{2,N}  = 3, 5, 8, 14$.
These dimensions must be seen in contrast to the $100$-dimensional input features used in \cite{becker2019deep}.
This suggests that also in the context of optimal stopping of non-Markov processes the log-signature can serve as an efficient compression of the path.

\begin{figure}
\includegraphics[width=4.2in]{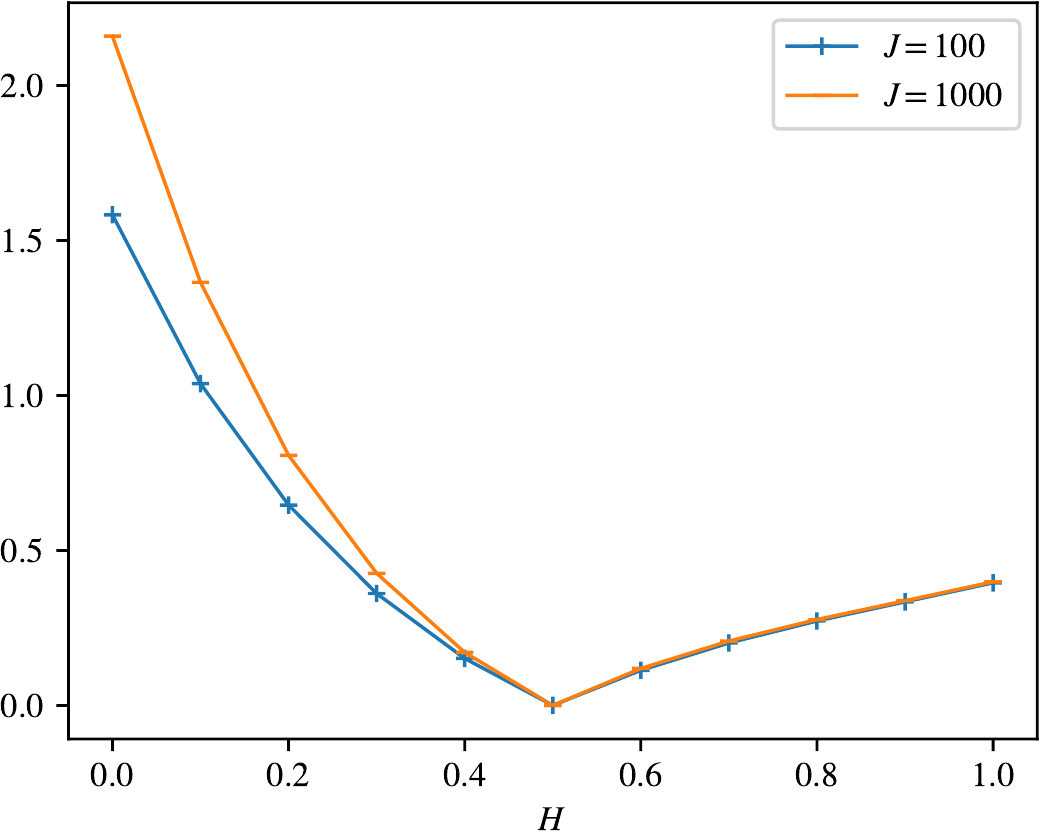}
\caption{A visualization of lower biased estimates to the optimal stopping value of fractional Brownian motion for different Hurst parameters $H\in\{0.0, 0.1, \dots, 1.0\}$, using time discretization grids with $J=100$ and $J=1000$ steps.
The values were calculated with the deep signature stopping rule using a truncation level $N=3$ for the log-signature and a neural network with $I=2$ hidden layers and $q=5 + 30$ nodes per layer.}\label{fig:fbm:figure}
\end{figure}

Figure~\ref{fig:fbm:figure} visualizes low-biased estimates to the value of optimal stopping problem of the fractional Brownian motion using time discretizations with $J=100$ and $J=1000$ steps.
The values were obtained using a deep signature stopping policy with a signature truncation level $N=3$ and the network configuration $I =2$, $q = 35$.
Note that the dimension of the (log-)signature obviously does not depend on the size of the time discretization grid.
Therefore, the number of model parameters of the signature stopping policy does not change when increasing $J$.
We see that the approximate values for $H \ll 0.5$ are significantly higher when using the finer discretization grid.
An observation of this sort was already foreshadowed by the divergence of the optimal stopping value for $H\to0$ from Proposition~\ref{prop:Hto0}.

\textbf{Linearization.} Before closing this section, we are going to discuss the numerical application of the results from Section~\ref{sec:appr-stopp-probl}.
Since the payoff coincides with the underlying path itself, Corollary~\ref{cor:opt-stopping-expected-sig} applies and after estimating the expected signature for some truncation level, this allows to approximate the value of the optimal stopping problem by a numerical approximation of the supremum in the right-hand side of \eqref{eqn:approx_stopping}.
Section~\ref{sec:linear_numerics} describes the technical details for this step.
However, it turned out that the values obtain with this procedure are drastically worse then the values obtained with any of the other methods that we have studied.
The difficulty lies in the tradeoff between the optimisation constrained $\{l \in T(V^{\ast})\;|\;\deg(l) + |l| \le K\}$ and the truncation level of the signature.
More precisely, we understand from the estimate in Lemma \ref{lem:key_shuffle_exp} that choosing $K$ large requires to also choose the truncation level $N$ large in order to decrease the error that is introduced from replacing the exponential with the shuffle exponential.
However, increasing $N$ also increases computational costs and, for this example, the limits of our computational resources were reached before obtaining reasonable results.

\subsection{An American put option in a rough electricity market}
We consider the following rough model for the spot electricity prices $S$ proposed in \cite{bennedsen2017rough}
\begin{align*}
S_t = \exp(X_t) = \exp(\Lambda_t + Z_t), \quad 0 \le t \le T,
\end{align*}
where lambda $\Lambda$ is a seasonality component and $Z$ is the rough base signal.
Note that we have refrained from using a spike signal in the model for instructive reasons.
Since we are interested in pricing an option on $S$ and in this context prices are usually corrected for seasonality influences, we will assume without much loss of generality that $$\Lambda \equiv x_0 \in \R_+.$$
Regarding the rough component, we follow the propositions in \cite{bennedsen2017rough} and finally choose the following model for the log-price
\begin{align*}
X_t = x_0 + \frac{\sigma}{c_\alpha}\int_{-\infty}^{t} (t-s)^{\alpha}e^{-\lambda (t-s)}  d W_s, \quad 0 \le t \le T,
\end{align*}
where $\alpha\in (-1/2, 1/2)$, $\lambda, \sigma > 0$, $c_\alpha$ is a normalizations constant such that $\E(X_t^{2})=\sigma^{2}$ and $W$ is a two sided standard Brownian motion.
Note that $X$ is a stationary Gaussian process and has the following auto-correlation function (see \cite{barndorff2012notes})
\begin{align*}
\mathrm{Corr}(X_t, X_{t+h}) = \frac{2^{-\alpha + \frac{1}{2}}}{\Gamma(\alpha + \frac{1}{2})}\left(\lambda h \right)^{\alpha + \frac{1}{2}}\widetilde K_{\alpha + \frac{1}{2}}(\lambda h)
\end{align*}
where $\widetilde{K}_{\nu}(x) = e^{-x}K_{\nu}(x)$ and $K_\nu$ is the modified Bessel function of the second kind.
One can easily verify that the paths of $X$ are $\gamma$-Hölder continuous for any $\gamma < \alpha + 1/2$ (see e.g. \cite{bennedsen2020semiparametric}).
Hence, by the same arguments as in Example~\ref{exmpl:fbm}, it follows that $X$ and the time augmented process $\hat X$ have a lift to a geometric rough path.

We are interested in approximating the price of an American put option on $S$, which is given by
\begin{align*}
\sup_{\tau \in \mathcal{S}}\E\left[ e^{-r\tau}\left(K - S_{\tau\wedge T}\right)_+ \right],
\end{align*}
where $K>0$ is the strike and $r\ge 0$ is an interest rate.
Hence, the payoff process of the corresponding optimal stopping problem is a continuous functional of underlying path given by $Y_t = e^{-rt}(K-\exp(X_t))_+$.

We set $T=1$ and for Monte-Carlo simulation use a time discretization with $J=100$ steps.
Note that the sample trajectories of $X$ can be obtained using the exact form of the above autocorrelation function.
In order to obtain simulations with a constant initial value, we sampled from the conditional law $\mathbb{P}(\cdot \vert X_0 = x_0)$.
For example, when generating the samples of $X$ with the Cholesky decomposition of the covariance matrix, this is simply achieved by setting the first component of the standard normal input noise to zero.

\begin{table}
\begin{tabular}{c | r | r | r | r || r }
$K$ &       $N=1$ &      $N= 2$ &     $N= 3$ &       $N=4$ & \multicolumn{1}{c}{EUR}\\
\hline
80  &    3.923 &    4.062 &    4.076 &    4.073 & 0.516\\
90  &    9.841 &   10.185 &   10.206 &   10.228 & 2.139 \\
100 &   17.753 &   18.279 &   18.341 &   18.363 & 5.770 \\
110 &   26.679 &   27.352 &   27.397 &   27.423 & 11.580 \\
120 &   36.099 &   36.854 &   36.945 &   36.898 & 19.128 \\
\end{tabular}
\vspace{0.5em}
\caption{Low-biased estimates of the value of the American put option with $T=1.0$ and different strikes $K$ on the energy spot price with model parameters $x_0 =100$, $\alpha = -0.4$, $\lambda = 0.02$, $\sigma=0.2$ and $r=0.05$.
A time discretization grid with $J=100$ points was used.
The values were obtained with deep signature stopping policies using $I=2$ hidden layer and $q=\eta_N + 30$ neurons per layer.
The rightmost column represents the prices of the European option.
The overall Monte Carlo error in the resimulation is below $0.0005$.}\label{tab:rough_electricity_values}
\end{table}

In Table~\ref{tab:rough_electricity_values} we present low-biased estimates to the value of American put option obtained with deep signature stopping policies.
As model parameters we have chosen $\alpha = -0.4$, $\lambda = 0.02$ (these values are based on statistical estimates given in \cite{bennedsen2017rough}), initial value $x_0 = 100$, volatility $\sigma = 0.2$ and interest rate $r=0.05$.
The network architecture for the deep stopping policy consisted of $I=2$ hidden layers, $q = \eta_{2, N} + 30$ neurons per layer and relu activation functions.
The networks were trained using $2^{20}$ sample paths and the low-biased estimates were calculated with $2^{23}$ sample paths.
As a reference value we have also presented an estimate of the price of the European put option, which is given by $\E[e^{-r T}(K-S_T)_+]$.
We observe that the values significantly improve when moving from truncation level $N=1$ to $N=2$.
For higher levels this improvement quickly saturates, which suggests that the values are approaching the true value of the option.

\subsection*{Acknowledgments}
\label{sec:acknowledgements}

All authors are supported by the MATH+ project AA4-2 \textit{Optimal control in energy markets using rough analysis and deep networks}.

\bibliographystyle{alpha}
\bibliography{refs}

\appendix

\section{Technical aspects of stopped rough paths}
\label{sec:techn-aspects-stopp}
Recall from Definition~\ref{defn:stopped_rp} the space stopped rough paths $\Lambda_T$ and its metric.
The following lemma characterizes the topology on $\Lambda_T$.
 \begin{lemma}\label{lem:final-topology}
  The topology on $\Lambda_T$ coincides with the final topology induced by the map $\varphi \colon [0,T] \times \hat{\Omega}_T^p \to \Lambda_T$, $\varphi(t,\hat{\mathbb{X}}) = \hat{\mathbb{X}}|_{[0,t]}$. Moreover, $\Lambda_T$ is Polish.
 \end{lemma}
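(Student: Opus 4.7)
Plan. The key device is the section $\sigma \colon \Lambda_T \to [0,T] \times \hat{\Omega}_T^p$ defined by $\sigma(\hat{\mathbb{X}}|_{[0,t]}) := (t, \tilde{\hat{\mathbb{X}}}|_{[0,T]})$, where $\tilde{\hat{\mathbb{X}}}|_{[0,T]}$ is the stopped extension constructed in Section~\ref{sec:space-stopped-rough}. By construction $\varphi \circ \sigma = \mathrm{id}_{\Lambda_T}$. I will first check that both $\varphi$ and $\sigma$ are continuous with respect to the metric topology $\tau_d$ on $\Lambda_T$, and then deduce the two assertions essentially for free.

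Continuity of $\varphi$ is the statement that restriction-plus-Dupire-extension is continuous from $[0,T] \times \hat{\Omega}_T^p$ into $\Lambda_T$, which is essentially the construction carried out in Section~\ref{sec:space-stopped-rough}. For $\sigma$, suppose $\hat{\mathbb{X}}^n|_{[0,t_n]} \to \hat{\mathbb{X}}|_{[0,t]}$ in $(\Lambda_T, d)$, so $t_n \to t$ and $d_{p\text{-var};[0,t\vee t_n]}(\tilde{\hat{\mathbb{X}}^n}, \tilde{\hat{\mathbb{X}}}) \to 0$. Past $\max(t,t_n)$ both extended paths are constant in the $X$-component and linear (and identical) in the time component, so the rough-path increments on $[\max(t,t_n),T]$ of $\tilde{\hat{\mathbb{X}}^n}$ and $\tilde{\hat{\mathbb{X}}}$ coincide; combined with sub-additivity of $p$-variation over the split at $\max(t,t_n)$, this promotes the given convergence to $d_{p\text{-var};[0,T]}(\tilde{\hat{\mathbb{X}}^n}, \tilde{\hat{\mathbb{X}}}) \to 0$, giving $\sigma(\hat{\mathbb{X}}^n|_{[0,t_n]}) \to \sigma(\hat{\mathbb{X}}|_{[0,t]})$ in $[0,T]\times\hat{\Omega}_T^p$.

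Equality of topologies follows immediately. Let $\tau_f$ be the final topology induced by $\varphi$. Continuity of $\varphi$ w.r.t.~$\tau_d$ gives $\tau_d \subseteq \tau_f$. Conversely, if $U \subseteq \Lambda_T$ with $\varphi^{-1}(U)$ open in $[0,T]\times\hat{\Omega}_T^p$, then continuity of $\sigma$ makes $\sigma^{-1}(\varphi^{-1}(U))$ open in $\tau_d$; but this set equals $(\varphi\circ\sigma)^{-1}(U)=U$, so $U \in \tau_d$. For Polishness, the same two continuity statements show $\sigma$ is a homeomorphism onto its image $\sigma(\Lambda_T) \subseteq [0,T]\times\hat{\Omega}_T^p$, which consists of those $(t,\hat{\mathbb{Z}})$ for which the $X$-component of $\hat{\mathbb{Z}}$ is constant on $[t,T]$. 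This is a closed constraint: if $(t_n,\hat{\mathbb{Z}}^n)\to(t,\hat{\mathbb{Z}})$ with each $Z^n$ constant on $[t_n,T]$, then for any $s \in (t,T]$ we have $s>t_n$ eventually, and the uniform convergence implied by $p$-variation convergence combined with $t_n \to t$ forces $Z_s = Z_t$. Hence $\Lambda_T$ is homeomorphic to a closed subspace of the Polish space $[0,T]\times\hat{\Omega}_T^p$ and is itself Polish.

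The main technical nuisance will be the $p$-variation bookkeeping under Dupire-type extension — namely that extending past the stopping time contributes nothing to the $p$-variation distance — which ultimately rests on (super-/sub-)additivity of $p$-variation over sub-intervals together with the fact that the extension is constant in the $X$-component and matches in the time component.
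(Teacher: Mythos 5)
Your argument is correct and is essentially the paper's proof repackaged. The paper establishes the nontrivial inclusion of topologies by an inline $\varepsilon$--$\delta$ argument whose only substantive ingredient is precisely your continuity of the section $\sigma$ at the relevant point: it uses $d_{p-\mathrm{var};[0,T]}(\tilde{\mathbb{X}},\tilde{\mathbb{Y}}) \le C_p\, d_{p-\mathrm{var};[0,s]}(\tilde{\mathbb{X}}|_{[0,s]},\mathbb{Y}|_{[0,s]})$, the point being that the two extensions have identical rough-path increments past the later cut time, so the tail contributes nothing. Isolating $\sigma$ with $\varphi\circ\sigma=\mathrm{id}_{\Lambda_T}$ and reading off $U=\sigma^{-1}(\varphi^{-1}(U))$ is a cleaner way to organize the same computation. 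For Polishness the paper argues slightly differently (separability because $\varphi$ is a continuous surjection from a separable space, completeness checked directly), whereas you exhibit $\Lambda_T$ as homeomorphic to a closed subspace of $[0,T]\times\hat{\Omega}_T^p$, which buys both properties at once. One correction is needed there: for $d\ge 2$ the image $\sigma(\Lambda_T)$ is \emph{not} characterized by constancy of the level-one $X$-component on $[t,T]$ alone, since a weakly geometric rough path may carry nontrivial area between the components $\blue{2},\dots,\blue{d+1}$ over an interval on which the first-level path is constant (``area bubbles''). The correct description of the image is that every increment over $[u_1,u_2]\subseteq[t,T]$ equals $\texp\bigl((u_2-u_1)e_1\bigr)$; with that formulation your limiting argument goes through verbatim, because $p$-variation convergence gives pointwise convergence of increments in every tensor level, not just the first. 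Alternatively, and more economically, $\sigma(\Lambda_T)$ is the fixed-point set of the continuous idempotent $\sigma\circ\varphi$ on the Hausdorff space $[0,T]\times\hat{\Omega}_T^p$ and is therefore closed without any explicit description.
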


 \begin{proof}
  A set $U \subset \Lambda_T$ is open with respect to the final topology if and only if $\varphi^{-1}(U)$ is open in $[0,T] \times \hat{\Omega}_T^p$. One can easily check that $\varphi$ is continuous for the topology induced by $d$, therefore $\varphi^{-1}(U)$ is open for every open set $U \subset \Lambda_T$. Now assume that $\varphi^{-1}(U)$ is open for a set $U \subset  \Lambda_T$. Let $\mathbb{X}|_{[0,t]} \in U$ and choose $\mathbb{Y}|_{[0,s]} \in \Lambda_T$ with $d(\mathbb{X}|_{[0,t]}, \mathbb{Y}|_{[0,s]}) < \varepsilon$. Our goal is to prove that $\mathbb{Y}|_{[0,s]} \in U$ for $\varepsilon$ chosen sufficiently small. Note that
  \begin{align*}
   \varphi^{-1}(\mathbb{X}|_{[0,t]}) = \{(t,\tilde{\mathbb{X}})\, :\, \tilde{\mathbb{X}}|_{[0,t]} = \mathbb{X}|_{[0,t]} \}.
  \end{align*}
  Assume $s \geq t$ first.
  Then $d(\mathbb{X}|_{[0,t]}, \mathbb{Y}|_{[0,s]}) < \varepsilon$ implies that
  \begin{align*}
   |t-s| < \varepsilon \quad \text{and} \quad d_{p-\mathrm{var};[0,s]}(\tilde{\mathbb{X}}|_{[0,s]}, \mathbb{Y}|_{[0,s]}) < \varepsilon
  \end{align*}
  where $\tilde{\mathbb{X}}|_{[0,s]}$ is the stopped path defined on $[0,s]$ as explained in Definition \ref{defn:stopped_rp}. Let $\tilde{\X} = \tilde{\mathbb{X}}|_{[0,T]} \in \hat{\Omega}_T^p$ be the stopped path defined on the whole time interval $[0,T]$.
  Since $(t,\tilde{\mathbb{X}}) \in \varphi^{-1}(\mathbb{X}|_{[0,t]}) \subset \varphi^{-1}(U)$ and $\varphi^{-1}(U)$ is open, there is a $\delta > 0$ such that whenever $u \in (t - \delta,t + \delta)$ and $d_{p-\mathrm{var};[0,T]}(\tilde{\mathbb{X}},\tilde{\mathbb{Y}}) < \delta$, we have $(u,\tilde{\mathbb{Y}}) \in \varphi^{-1}(U)$. Choosing $\varepsilon$ sufficiently small, we can assume that $s \in (t - \delta,t + \delta)$. Define $\tilde{\Y} = \tilde{\mathbb{Y}}|_{[0,T]} \in \hat{\Omega}_T^p$ as in Definition \ref{defn:stopped_rp}.
  Then $(s,\tilde{\mathbb{Y}}) \in \varphi^{-1}(\mathbb{Y}|_{[0,s]})$ and
  \begin{align*}
   d_{p-\mathrm{var};[0,T]}(\tilde{\mathbb{X}},\tilde{\mathbb{Y}}) &\leq C_p(d_{p-\mathrm{var};[0,s]}(\tilde{\mathbb{X}},\tilde{\mathbb{Y}}) + d_{p-\mathrm{var};[s,T]}(\tilde{\mathbb{X}},\tilde{\mathbb{Y}})) \\
   &= C_p d_{p-\mathrm{var};[0,s]}(\tilde{\mathbb{X}}|_{[0,s]},\mathbb{Y}|_{[0,s]}) \leq C_p \varepsilon.
  \end{align*}
  Choosing $\varepsilon$ small, we conclude $(s,\tilde{\mathbb{Y}}) \in \varphi^{-1}(U)$ and thus $\mathbb{Y}|_{[0,s]} \in U$. For $s \leq t$, we can argue similarly which proves that both topologies indeed coincide. Concerning the second statement, separability follows from the separability of $[0,T] \times \hat{\Omega}_T^p$ and the fact that $\varphi$ is a continuous surjection. To prove that $\Lambda_T$ is complete with respect to the metric $d$ is straightforward and follows from the fact that $[0,T]$ and $\hat{\Omega}_T^p$ are complete.
  \end{proof}

 \begin{corollary}\label{cor:final-continuity}
  Let $Z$ be any topological space. A map $g \colon \Lambda_T \to Z$ is continuous if and only if the map $[0,T] \times \hat{\Omega}_T^p \ni (t,\hat{\mathbb{X}}) \mapsto g(\hat{\mathbb{X}}|_{[0,t]}) \in Z$ is continuous.
 \end{corollary}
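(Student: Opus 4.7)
The plan is to deduce the corollary directly from Lemma~\ref{lem:final-topology} by invoking the universal property of the final topology. Since Lemma~\ref{lem:final-topology} identifies the topology on $\Lambda_T$ with the final topology induced by $\varphi \colon [0,T] \times \hat{\Omega}_T^p \to \Lambda_T$, $\varphi(t,\hat{\mathbb{X}}) = \hat{\mathbb{X}}|_{[0,t]}$, and since the composition $g \circ \varphi$ is precisely the map $(t,\hat{\mathbb{X}}) \mapsto g(\hat{\mathbb{X}}|_{[0,t]})$ appearing in the statement, the claim reduces to a standard general-topology fact.

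First I would treat the forward implication: if $g$ is continuous then $g \circ \varphi$ is continuous because $\varphi$ was shown to be continuous in the proof of Lemma~\ref{lem:final-topology} (from the definition of the metric $d$ on $\Lambda_T$), and continuity is preserved under composition.

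For the reverse implication, suppose the map $(t,\hat{\mathbb{X}}) \mapsto g(\hat{\mathbb{X}}|_{[0,t]})$ is continuous and let $U \subset Z$ be open. Then
\begin{equation*}
\varphi^{-1}\!\left( g^{-1}(U) \right) = (g \circ \varphi)^{-1}(U)
\end{equation*}
is open in $[0,T] \times \hat{\Omega}_T^p$. By Lemma~\ref{lem:final-topology}, the topology on $\Lambda_T$ is the final topology with respect to $\varphi$, so a set $A \subset \Lambda_T$ is open precisely when $\varphi^{-1}(A)$ is open; applied to $A = g^{-1}(U)$, this yields the continuity of $g$.

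There is no substantive obstacle here: once Lemma~\ref{lem:final-topology} has identified the topology as a final topology, the corollary is a one-line application of the universal property. The only mild point requiring care is noting that $\varphi$ is surjective, which guarantees that the pullback criterion genuinely characterizes all open subsets of $\Lambda_T$ and not merely those in the image.
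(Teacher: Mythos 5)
Your proof is correct and is exactly the argument the paper intends: the paper's proof is the one-line "follows from the universal property of the final topology," and you have simply spelled out that universal property using Lemma~\ref{lem:final-topology}. (Minor aside: surjectivity of $\varphi$ is not actually needed, since "$A$ open iff $\varphi^{-1}(A)$ open" is the definition of the final topology regardless; but mentioning it does no harm.)
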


 \begin{proof}
  Follows from the universal property of the final topology.
 \end{proof}
 
\section{Solution of an optimal stopping problem corresponding to a discrete time fractional Brownian motion with zero Hurst parameter}\label{sec:H0calcuation}
Recall from Remark \ref{eq:H0covariance} that, as $H$ tends to zero, the covariance kernel of a fractional Brownian motion converges pointwise to the kernel $\rho_0$ given in \eqref{eq:H0covariance}. 
In this section we are going to derive an explicit solution to the optimal stopping problem associated to  a discrete time Gaussian process with the covariance kernel $\rho_0$.
Therefore let $(\xi_0, \xi_1, \dots, \xi_J)$ be a finite sequence of independent standard normal distributed random variables defined on a probability space $(\Omega, \mathcal{F}, \P)$.
Define the process 
\begin{align*}
Y = (Y_j)_{j=0,1, \dots, J}, \quad Y_j = \xi_j - \xi_0,
\end{align*}
and note that its covariance structure is given by $\E[Y_i Y_j] = 2\cdot\rho_0(i, j)$.
For ease of notation we refrain from scaling the process by $1/\sqrt{2}$.
Further note that the filtration that the process $Y$ generates is given by
\begin{align*}
\mathcal{F}_0 = \{\emptyset, \Omega\}, \quad \mathcal{F}_j = \sigma(\xi_1 - \xi_0, \dots, \xi_j - \xi_0).
\end{align*}
We are interested in finding the solution to the discrete time optimal stopping problem associated to the payoff $Y$.
Therefore denote by $\mathcal{S}_J$ the set of discrete stopping times with respect to the filtration $(\mathcal{F}_j)_{j = 0, 1, \dots, J}$ and define the value process
\begin{align*}
V_j = \sup_{\sigma \in \mathcal{S}_J, \sigma \ge j} \E\left[Y_\sigma \Big\vert \mathcal{F}_j \right], \quad j = 0, 1, \dots, J.
\end{align*}
Then we have $V_J = Y_J$ and by the dynamic programming principle it follows
\begin{align}\label{eqn:bellman}
V_j = \max\left\{Y_j,\; \E[ V_{j+1} \vert \mathcal{F}_j]\right\}, \quad j = 0, 1, ..., J-1.
\end{align}
We are now going to proceed by recursively calculating the conditional expectation in the dynamic programming principle. Therefore define
\begin{align*}
\overline{\xi_{1:j}} := \frac{1}{j}\sum_{k=1}^{j}\xi_k
\end{align*}
and note that we have the following
\begin{lemma}
Let $j \in \{1, \dots, J\}$, then the conditional expectation of $\xi_0$ given $\mathcal{F}_j$ is given by
\begin{align*}
\E[\xi_0 \vert \mathcal{F}_j] = \frac{j}{j+1}(\xi_0 - \overline{\xi_{1:j}}).
\end{align*}
\end{lemma}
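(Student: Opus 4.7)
The plan is to exploit joint Gaussianity of $(\xi_0, \xi_1, \dots, \xi_j)$ together with the exchangeability of $\xi_1, \dots, \xi_j$ to reduce the conditional expectation to a one-parameter orthogonal projection, and then solve for the single unknown coefficient via the normal equations.

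First I would observe that since $\xi_0, Y_1, \dots, Y_j$ are linear combinations of the independent Gaussians $\xi_0, \dots, \xi_j$, the whole vector $(\xi_0, Y_1, \dots, Y_j)$ is jointly Gaussian. Consequently $\E[\xi_0 \mid \mathcal{F}_j]$ coincides with the $L^2$-projection of $\xi_0$ onto the linear span of $Y_1, \dots, Y_j$, and in particular is a linear combination $\sum_{k=1}^j c_k Y_k$. The joint law of $(\xi_0, Y_1, \dots, Y_j)$ is invariant under permutations of the indices $1, \dots, j$ (since the $\xi_k$, $k \ge 1$, are i.i.d.\ and independent of $\xi_0$), so by uniqueness of the projection all $c_k$ must coincide, say $c_k \equiv c$.

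Next I would determine $c$ from the orthogonality (normal) equations $\E[(\xi_0 - c\sum_{k=1}^j Y_k) Y_l] = 0$ for $l = 1, \dots, j$. A direct computation gives $\E[\xi_0 Y_l] = \E[\xi_0(\xi_l - \xi_0)] = -1$ and $\E[Y_k Y_l] = 1 + \delta_{kl}$, so $\E[(\sum_k Y_k) Y_l] = j + 1$. This yields $-1 - c(j+1) = 0$, hence $c = -1/(j+1)$, and therefore
\begin{equation*}
\E[\xi_0 \mid \mathcal{F}_j] = -\tfrac{1}{j+1}\sum_{k=1}^j (\xi_k - \xi_0) = \tfrac{j}{j+1}\xi_0 - \tfrac{1}{j+1}\sum_{k=1}^j \xi_k = \tfrac{j}{j+1}\bigl(\xi_0 - \overline{\xi_{1:j}}\bigr),
\end{equation*}
which is the claimed formula. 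Note that although the right-hand side is written using $\xi_0$, rearranging shows it equals $-\tfrac{1}{j+1}\sum_{k=1}^j Y_k$, which is manifestly $\mathcal{F}_j$-measurable, so there is no inconsistency.

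There is really no serious obstacle here: the only substantive step is recognizing that the symmetry in the coordinates $1, \dots, j$ collapses the projection to a one-dimensional problem, after which everything reduces to two elementary inner-product computations. One could equivalently invoke the Sherman--Morrison identity to invert $\mathrm{Cov}(Y_1, \dots, Y_j) = I_j + \mathbf{1}\mathbf{1}^\top$ and apply the standard Gaussian conditioning formula, but the symmetry argument avoids even that small piece of linear algebra.
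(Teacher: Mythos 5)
Your proof is correct and rests on the same underlying fact as the paper's: for jointly Gaussian variables the conditional expectation is the $L^2$-projection onto the span of the conditioning variables, because an uncorrelated Gaussian residual is independent. The only difference is direction --- you \emph{derive} the coefficient from the normal equations (using exchangeability of $\xi_1,\dots,\xi_j$ to collapse to a single unknown), whereas the paper \emph{verifies} the stated formula by checking that the residual $\frac{1}{j+1}(\xi_0+\xi_1+\cdots+\xi_j)$ is uncorrelated with, hence independent of, each $Y_i$; both amount to the same covariance computations $\E[\xi_0 Y_l]=-1$ and $\E[Y_kY_l]=1+\delta_{kl}$.
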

\begin{proof}
First note that the random variable in the above right-hand side is mean-zero Gaussian and is $\mathcal{F}_j$-measurable which can easily be seen from the identity
\begin{align*}
\frac{j}{j+1}(\xi_0 - \overline{\xi_{1:j}}) = \frac{1}{j+1}\sum_{i=1}^{j} Y_i.
\end{align*}
On the other hand, we have for all $A \in \mathcal{F}_j$
\begin{align*}
\E\left[ \left( \xi_0 -\frac{j}{j+1}(\xi_0 - \overline{\xi_{1:j}})\right)1_{A}\right] = \E\left[ \frac{1}{j+1}\left( \xi_0 + \xi_1 + \cdots + \xi_j\right)1_{A}\right] = 0
\end{align*}
since the random variable $\xi_0 + \xi_1 + \cdots + \xi_j$ is independent of $\mathcal{F}_j$.
Indeed, every $Y_i = \xi_i - \xi_0$ for $i \le j$ is uncorrelated with $\xi_0 + \xi_1 + \cdots + \xi_j$, hence due to Gaussianity independent.
\end{proof}
\begin{remark}
We can observe that the process $(Y_j, \xi_0 - \overline{\xi_{1:j}})_{j=0, 1, \dots, J}$ is a two-dimensional Markov process adapted to the filtration $(\mathcal{F}_j)_{j = 0, 1, \dots, J}$.
Indeed, recall that $Y_j = \xi_j - \xi_0$ and note that $\xi_j$ is independent of $\mathcal{F}_i$ for all $i<j$.
Further, we see that the conditional law of $\xi_0$ given $\mathcal{F}_i$ is Gaussian.
From the above lemma it follows that the conditional mean is given by $\xi_0 - \overline{\xi_{1:i}}$ and following a simple Gaussian calculation we see that the conditional variance is deterministic (depending only on $i$ and $j$).
\end{remark}
The conditional expectation of the value function $V_J$ with respect to $\mathcal{F}_{J-1}$ is then given by
\begin{align*}
\E[V_J \vert \mathcal{F}_{J-1}] = \E[- \xi_0 \vert \mathcal{F}_{J-1}] = \frac{J-1}{J}(\overline{\xi_{1:J-1}} - \xi_0).
\end{align*}
Hence, plugging into the Bellman equation \eqref{eqn:bellman} we obtain for the value process at $J-1$
\begin{align*}
V_{J-1} &=\max\left\{\xi_{J-1} - \xi_0,   \frac{J-1}{J}(\overline{\xi_{1:J-1}} - \xi_0) \right\}.
\end{align*}
Using the dynamic programming principle we are now going to show the following
\begin{proposition}\label{prop:H0} For all $j \in \{1, \dots, J-1\}$ it holds
\begin{align*}
V_{j} &=\max\left\{\xi_{j} - \xi_0,   \mu_{j+1} + \frac{j}{j+1}(\overline{\xi_{1:j}} - \xi_0) \right\},
\end{align*}
and $V_0 = \mu_1$,
where
\begin{align*}
\mu_{J} = 0, \quad \mu_{j} = \sqrt{\frac{j}{j+1}}\gamma\left(\mu_{j+1}\sqrt{\frac{j+1}{j}}\right), \quad j = J-1, J-2, \dots, 1
\end{align*}
where $\gamma(x) = \phi(x) + x\Phi(x)$ and $\Phi$ and $\phi$ are the cdf and pdf of the standard normal distribution.
\end{proposition}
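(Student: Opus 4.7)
The natural approach is backward induction on $j$, using the dynamic programming principle \eqref{eqn:bellman}. The base case $j=J-1$ is the displayed identity just before the proposition: it matches the claimed formula exactly upon setting $\mu_J=0$. The inductive step amounts to computing $\E[V_{j+1}\,|\,\mathcal{F}_j]$ under the induction hypothesis and then taking the pointwise maximum with $Y_j=\xi_j-\xi_0$.

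\textbf{Key steps.} Write $\bar{Y}_j := \overline{\xi_{1:j}} - \xi_0 = \tfrac{1}{j}\sum_{i=1}^{j}Y_i$, which is $\mathcal{F}_j$-measurable. By the lemma and a short Gaussian variance computation, the conditional law of $\xi_0$ given $\mathcal{F}_j$ is
\[
\xi_0 \mid \mathcal{F}_j \;\sim\; \mathcal{N}\!\left(-\tfrac{j}{j+1}\bar{Y}_j,\;\tfrac{1}{j+1}\right),
\]
and $\xi_{j+1}$ is independent of $\mathcal{F}_j \vee \sigma(\xi_0)$ with standard normal law. Using $\overline{\xi_{1:j+1}} = \tfrac{j}{j+1}\overline{\xi_{1:j}} + \tfrac{1}{j+1}\xi_{j+1}$, rewrite the induction hypothesis as $V_{j+1} = \max(A,B)$ where $A = \xi_{j+1}-\xi_0$ and $B = \mu_{j+2} + \tfrac{j}{j+2}\bar Y_j - \tfrac{1}{j+2}\xi_0 + \tfrac{1}{j+2}\xi_{j+1} + \tfrac{j}{j+2}\xi_0$ (after re-expressing $\overline{\xi_{1:j}}$ in terms of $\bar Y_j$ and $\xi_0$). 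Then employ the identity $\max(A,B) = B + (A-B)_+$ and compute the conditional mean and variance of the Gaussian $A-B$.

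\textbf{Closing the recursion.} A direct calculation shows $\E[B\mid \mathcal{F}_j] = \mu_{j+2} + \tfrac{j}{j+1}\bar Y_j$ and, more importantly, that
\[
A - B \mid \mathcal{F}_j \;\sim\; \mathcal{N}\!\left(-\mu_{j+2},\;\tfrac{j+1}{j+2}\right);
\]
the $\bar Y_j$-dependent drift cancels by design. The standard formula $\E[X_+] = \sigma\,\gamma(\mu/\sigma)$ for $X\sim\mathcal{N}(\mu,\sigma^2)$ gives
\[
\E[(A-B)_+\mid \mathcal{F}_j] = \sqrt{\tfrac{j+1}{j+2}}\,\gamma\!\left(-\mu_{j+2}\sqrt{\tfrac{j+2}{j+1}}\right).
\]
Here the main obstacle is recognizing that this does not \emph{obviously} match the definition of $\mu_{j+1}$, which uses $\gamma(+\mu_{j+2}\sqrt{(j+2)/(j+1)})$. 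The key algebraic observation that closes the argument is the elementary identity $\gamma(x) - \gamma(-x) = x$, which when applied with $x = \mu_{j+2}\sqrt{(j+2)/(j+1)}$ yields
\[
\E[B\mid\mathcal{F}_j] + \E[(A-B)_+\mid\mathcal{F}_j] = \mu_{j+1} + \tfrac{j}{j+1}\bar Y_j.
\]

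\textbf{Finishing.} Substituting into the Bellman equation \eqref{eqn:bellman} produces the claimed formula for $V_j$ for $1 \le j \le J-1$. For $j=0$, triviality of $\mathcal{F}_0$ gives $\E[V_1\mid\mathcal{F}_0]=\E[V_1]=\mu_1$ (the $\bar Y_0$ term vanishes), while $Y_0=0$; since $\gamma>0$ implies $\mu_1>0$, we obtain $V_0 = \max(0,\mu_1) = \mu_1$. The entire proof is routine Gaussian conditioning once the $\gamma$-identity is in hand; the main subtlety is the careful bookkeeping of which quantities are $\mathcal{F}_j$-measurable and the cancellation that makes $A-B$ have a drift independent of $\bar Y_j$.
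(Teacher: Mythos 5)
Your proposal is correct and follows essentially the same route as the paper: backward induction via the Bellman recursion, the same orthogonal decomposition of $\xi_{j+1}-\xi_0$ and $\tfrac{j+1}{j+2}(\overline{\xi_{1:j+1}}-\xi_0)$ relative to $\mathcal{F}_j$ (so that the $\overline{\xi_{1:j}}-\xi_0$ drift is common to both terms and the remaining randomness reduces to a single Gaussian), the only cosmetic difference being that you evaluate the maximum as $\E[B\mid\mathcal{F}_j]+\E[(A-B)_+\mid\mathcal{F}_j]$ and then need the reflection identity $\gamma(x)-\gamma(-x)=x$, whereas the paper computes $\E[\max\{\eta,\beta+\alpha\eta\}]=(1-\alpha)\gamma(\beta/(1-\alpha))$ directly and lands on $\mu_{j+1}$ without that extra step. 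One transcription slip: your displayed expression for $B$ contains both $-\tfrac{1}{j+2}\xi_0$ and $+\tfrac{j}{j+2}\xi_0$, whereas the correct total coefficient of $\xi_0$ is $-\tfrac{1}{j+2}$; this does not propagate, since the conditional mean $-\mu_{j+2}$ and variance $\tfrac{j+1}{j+2}$ you then state for $A-B$ are the correct ones.
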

\begin{remark}\label{rem:H0Jinfty} Note that for any $j\in\{1,\dots,J-1\}$ it holds $\mu_j \ge \gamma(2\cdot \mu_{j+1})/2$.
Hence it follows that $\mu_1 \ge f^{\circ (J-1)}(0)$ with $f(x):= \gamma(2x)/2$.
Since $f$ is continuous and $f(x) > x$ for all $x\in[0, \infty)$ it follows that $f^{\circ J}(0) \to \infty$ for $J\to \infty$.
This confirms what one intuitively expects, that the value of the above optimal stopping problem converges to infinity as $J$ tends to infinity.
\end{remark}
\begin{proof}
The claim was already shown for $j = J-1$.
We are going to use a backwards induction to proof the claim for $0 \le j < J-1$.
In order to explicitly calculate the conditional expectation of $V_{j+1}$ given $\mathcal{F}_{j}$, we are going the derive a orthogonal decomposition of each of the random variables in the above maximum with respect to $\mathcal{F}_{j}$.
For the first random variable we have
\begin{align*}
\xi_{j+1} - \xi_0 = \left(\xi_{j+1} - \frac{1}{j+1}\xi_0-\frac{j}{j+1}\overline{\xi_{1:j}} \right) + \frac{j}{j+1}(\overline{\xi_{1:j}} -\xi_0)
\end{align*}
and for the second we have
\begin{align*}
\frac{j+1}{j+2}(\overline{\xi_{1:j+1}} - \xi_0) =& \frac{1}{j+2}\xi_{j+1} + \frac{j}{j+2}\overline{\xi_{1:j}} - \frac{j+1}{j+2}\xi_0\\
=& \frac{1}{j+2}\xi_{j+1} + \frac{j}{j+2}(\overline{\xi_{1:j}} - \xi_0) -\frac{1}{j+2}\xi_0\\
=& \frac{1}{j+2}\xi_{j+1} + \frac{j}{j+2}(\overline{\xi_{1:j}} - \xi_0) + \frac{1}{j+2}\frac{j}{j+1}(\overline{\xi_{1:j}} -\xi_0)\\
&- \frac{1}{j+2}\left(\frac{1}{j+1}\xi_0+\frac{j}{j+1}\overline{\xi_{1:j}}\right)\\
=& \frac{1}{j+2}\left(\xi_{j+1} - \frac{1}{j+1}\xi_0-\frac{j}{j+1}\overline{\xi_{1:j}} \right) + \frac{j}{j+1}(\overline{\xi_{1:j}} -\xi_0).
\end{align*}
For simplifying notation in what follows, we define
\begin{align*}
\eta_j := \sqrt{\frac{j+1}{j+2}} \left(\xi_{j+1} - \frac{1}{j+1}\xi_0-\frac{j}{j+1}\overline{\xi_{1:j}} \right)
\end{align*}
which is a standard Gaussian random variable orthogonal to $\mathcal{F}_j$, indeed note that
\begin{align*}
\mathbb{V}\left(\xi_{j+1} - \frac{1}{j+1}\xi_0-\frac{j}{j+1}\overline{\xi_{1:j}}\right) = 1+ \frac{1}{(j+1)^{2}}+\frac{j}{(j+1)^{2}} = \frac{j+2}{j+1}.
\end{align*}
Using this orthogonal decomposition and the induction claim we then have
\begin{align*}
\E[V_{j+1}\vert \mathcal{F}_{j}] = \sqrt{\frac{j+2}{j+1}}\E\left[\max\left\{\eta_j,\;\, \mu_{j+2}\sqrt{\frac{j+1}{j+2}} +  \frac{1}{j+2}\eta_j \right\}\right] 
+ \frac{j}{j+1}(\overline{\xi_{1:j}} -\xi_0).
\end{align*}
Resolving the above expectation is now simple one-variate Gaussian calculation. Therefore let $0 \le \alpha < 1$, $\beta \in \R$, then note that
\begin{align*}
\E[\max\{\eta_j, \beta + \alpha \eta_j\}] &= \int_{\beta/(1-\alpha)}^{\infty} x \phi(x)dx +\int_{-\infty}^{\beta/(1-\alpha)} (\beta + \alpha x) \phi(x)dx \\
&= \phi(\beta/(1-\alpha)) + \beta\Phi(\beta/(1-\alpha)) - \alpha\phi(\beta/(1-\alpha)) \\
&= (1-\alpha)\phi(\beta/(1-\alpha)) + \beta\Phi(\beta/(1-\alpha)) \\
&= (1-\alpha)\gamma(\beta/(1-\alpha)).
\end{align*}
Therefore we finally have
\begin{align*}
\E[V_{j+1}\vert \mathcal{F}_{j}] = \sqrt{\frac{j+2}{j+1}} \frac{j+1}{j+2}\gamma\left(\mu_{j+2}\sqrt{\frac{j+1}{j+2}}\frac{j+2}{j+1} \right)
+ \frac{j}{j+1}(\overline{\xi_{1:j}} -\xi_0)
\end{align*}
and the rest of the claim follows from the dynamical programming principle.
\end{proof}

\section{Implementation details}\label{sec:implmentation_details}
We are going to discuss the technical steps which are necessary in order to allow Monte-Carlo simulation for the signature stopping methodology.
Throughout this section we will assume that the rough path $\X\in\Omega_T$ is the lift of a $d$-dimensional process $X=(X_t)_{0 \le t \le T}$ and that $Y=(Y_t)_{0 \le t \le T}$ is a continuous real-valued process adapted to the filtration generated by $X$.

\subsection{Time discretization}\label{sec:time_discretization}
We fix a time grid with equidistant points $(t_j)_{j=0,1,\dots, J}$, $t_j = Tj/J$ and define the discretized payoff process $\tilde Y$ by
$$\tilde Y_j =  Y_{t_j}, \quad j=0, 1, \dots, J.$$
Next, we fix a refinement $(s_j)_{j=0, 1, \dots, J^{\prime}}$ of the grid $(t_j)_{j=0,1,\dots, J}$ with $J^{\prime} \ge J$ and denote by $\mathcal{S}_J$ the set of discrete stopping times with respect to the following filtration
\begin{align*}
\tilde{\mathcal{F}}_j := \sigma\left(X_{s_i}\,:\, s_i\le t_j,\, i = 0, 1, \dots J^{\prime}\right), \quad j = 0, 1, \dots, J. 
\end{align*}
We can approximate the value of time optimal stopping problem \eqref{eq:intro-optimal-stopping} from below by the value of  discrete time optimal stopping problem associated to $(\tilde Y, (\tilde{\mathcal{F}}_j)_{j=0, 1, \dots, J})$, i.e.
\begin{align*}
\sup_{\sigma \in \mathcal{S}_{J}}\E\Big[ \tilde Y_{\sigma \wedge J}\Big] \le \sup_{\tau \in \mathcal{S}}\E\Big[ Y_{\tau \wedge T}\Big].
\end{align*}
Indeed, since $\tilde{\mathcal{F}}_{j} \subset \mathcal{F}_{t_j}$ for all $j \in\{0, 1, \dots, J\}$, it is easy to see that for any $\sigma\in\mathcal{S}_J$, $\omega \mapsto t_{\sigma(\omega)}$ defines a stopping time in $\mathcal{S}$.
Next, denote by $\tilde X$ the linear interpolation of the process $X$ on the grid $(s_j)_{j=0, 1, \dots, J^{\prime}}$, i.e.
\begin{align*}
\tilde X_t = X_{s_j} + \frac{t- s_j}{s_{j+1}-s_j}(X_{s_{j+1}} - X_{s_{j}}), \quad s_j \le t \le s_{j+1}, \; j = 0, 1, \dots, J^{\prime}.
\end{align*}
Clearly also the time augmented path $((t, \tilde X_t))_{0 \le t \le T}$ is piecewise linear and this process has a (trivial) lift to a rough path, which we denote by $\tilde{\X}$.
We will now adapt the definition of the randomized stopping times \eqref{def:rand-stopping-times} to the discrete time setting.
Therefore, let $Z$ be positive random variable as in Definition~\ref{def:rand-stopping-times} and let  $\theta\in\mathcal{T}$ be a continuous stopping rule, then we define 
\begin{align}\label{eq:discrete_stopping_time}
\sigma_\theta^{r} = \inf\left\{ 0 \le j\le J \; \Bigg\vert\; \sum_{i = 0}^{j} \theta(\tilde\X\vert_{[0, t_i]})^{2} \ge Z \right\}.
\end{align} 
Since $\tilde\X\vert_{[0, t_j]}$ is $\tilde{\mathcal{F}}_j$-measurable for all $j\in\{0, 1, \dots, J\}$, it follows that that $\sigma_\theta^{r} \in \mathcal{S}_J$.
With minor changes in proof, an analogous result of Proposition~\ref{lem:regularized_value} holds for the stopping times $\sigma_\theta^{r}$ and yields the following expression for the expected payoff
\begin{align}\label{eq:discretized_regular_value}
\E\left[ \tilde{Y}_{\sigma^{r}_\theta}\right] = \E\left[ \tilde{Y}_0 + \sum_{j = 0}^{J-1}G(j)(\tilde Y_{j+1} - \tilde Y_{j})\right], \qquad\text{with } G(j) := 1 - F_Z\left(\sum_{i=0}^{j}\theta(\tilde\X\vert_{[0, t_i]})^{2} \right),
\end{align}
where $F_Z$ is the cumulative distribution function of $Z$.

\subsection{Simulation of the (log-)signature}\label{sec:signature-sampling}
The standard, model independent procedure for approximating the truncated signature of given simulations of a path is to calculate the truncated signature of the linearly interpolated path.
Regarding the optimal stopping problem, it is therefore necessary to generate joint samples of the process $X$ and the payoff $Y$, while one may possibly choose a finer approximation grid for the simulation of $X$. 
This may require discretization of stochastic / rough differential equations.
The truncated signature of the linearly interpolated path on the given time grid can then be calculated exactly (up to floating point errors) and therefore always yields an element in the free nilpotent Lie-group.
Fortunately, packages for this task are readily available. See, for instance, the \emph{iisignature} library \cite{RG20}.
Note that in case the signature is needed on each subinterval $[0, s_j]$ (see Section~\ref{sec:non-linear-loss} below), one can use the calculation of the signature on the preceding subinterval to significantly improve the efficiency. 
As already mentioned in Section \ref{sec:non-linear-rules}, the log-signature of the linearly interpolated path can be directly calculated in the Hall basis and does not require the calculation of the signature.

\subsection{Optimization of linear signature stopping policies based on the expected signature}\label{sec:linear_numerics}
If the setting allows to pose the optimization problem in terms of the expected signature only, as in the context of Corollary~\ref{cor:opt-stopping-expected-sig}, then we need to compute the expected truncated signature $\E[ \hat\X_{0,T}^{\le N}]$.
This can be done using Monte-Carlo estimation and the procedure for calculating samples of the truncated signature as in explaind in Section~\ref{sec:signature-sampling} above.
The big advantage is, of course, that we only need to compute the expected signature once, and can then apply the optimization algorithm of our choice to a deterministic optimization problem.
For calculating the expression $\sexp(- (l \shuffle l)\blue{1})$ upto truncation degree $N$, one needs to implement a structure that allows the handling of noncommutative polynomials with variable coefficients and that supports the shuffle operation.
Since we could not find an appropriate package we have implemented this part ourself. 
Now let $\lambda_0 w_0 + \lambda_1 w_1 + \cdots + \lambda_{n_k} w_{n_k} \in T^{N}((\R^{d+1})^\ast)$, where $w_0, \dots, w_{n_k}$ ranges over all words in of length $k \le N$ in the alphabet $\mathcal{A}_{d+1}$ (a total number of $n_k + 1= ((d+1)^{k+1} -1)/d)$ words).
Further, let $E \in T^{N}(\R^{d+1})$ be a Monte-Carlo estimate of the expected truncated signature  signature, then the expression that needs to maximized is given by
$$\left\langle \sexp\left( - \left(\lambda_0 w_0 + \cdots + \lambda_{n_k} w_{n_k}\right)^{\shuffle 2} \blue{1}\right) l_Y, E \right\rangle,$$
where $l_Y \in T((\R^{d+1})^{*})$, such that $Y = \langle l_Y, \hat\X^{<\infty}\rangle$.
This expression is a polynomial in the coefficients $\lambda_1, \dots, \lambda_{n_k}$.
We can then apply a general state-of-the-art iterative optimization algorithms to optimize the above polynomial subject to the constraint $|\lambda_1| + \cdots + |\lambda_{n_k}| \le K$.
The resulting optimal coefficients can further be used to calculate a low-biased estimate of the optimal stopping problem as described in Section~\ref{sec:lower-bounds} below.
  
\subsection{A loss-function for linear and deep signature stopping policies}\label{sec:non-linear-loss} 
In this subsection we will define a loss-function for the linear and deep stopping policies based on the regular form of the expected payoff \eqref{eq:discretized_regular_value}.
The loss-function can be used with a stochastic gradient descent method in order to numerically optimize the parameters of the stopping policy and thus to approximate the value of the optimal stopping problem.
Assume that we have generated $M$ sample trajectories $$(\tilde X_j^{(m)}, \tilde{Y}_j^{(m)})_{0 \le j \le J,\; 1 \le m \le M}, \qquad\text{with }
 \tilde X_j^{(m)}\in \R^{d+1}, \quad\tilde Y_j^{(m)}\in \R, $$ of the time augmented path $\hat{X}$ and the payoff process $Y$ (for simplicity we have not used a finer time grid for $\hat X$).
Fix a truncation level $N \in \{1, 2, \dots\}$.
For each series of samples $\tilde X^{(m)}_0, \dots, \tilde X^{(m)}_J$ we can calculated the truncated signature of the linearly interpolated path on each subinterval $[0, t_j]$, yielding the the sequence of vectors
$$(\tilde{\X}_j^{(m)})_{0 \le j \le J, \; 1 \le m \le M}, \qquad \text{with }\tilde{\X}_j^{(m)} \in T^{N}(\R^{d+1}) \cong \R^{1+\sigma_{d+1, N}}.$$
Let $\theta_l \in \mathcal{T}_{\mathrm{sig}}$ be a signature stopping policy of the form $\theta_l(\hat\X) = \langle l, \hat\X^{< \infty} \rangle$
for some $l \in T^{N}((\R^{d+1})^{\ast})$.
Further let $\mathcal{M} \subset \{1, ..., M\}$ be the indices of a batch of samples. Using \eqref{eq:discretized_regular_value} we define the following loss-function for the stopping policy $\theta_l$
\begin{align*}
\ell\left( \tilde{Y}\vert_{\mathcal{M}}, \tilde{\L}\vert_{\mathcal{M}}; \; l\right) = - \frac{1}{|\mathcal{M}|}\sum_{m \in \mathcal{M}}\left\{\tilde{Y}_0^{(m)} + \sum_{j=0}^{J-1} G_Z\left(\sum_{i=0}^{j}\left\langle l, \tilde{\X}^{(m)}_{i}\right\rangle^{2}\right)(\tilde{Y}_{j+1}^{(m)}- \tilde{Y}_{j}^{(m)}) \right\},
\end{align*}
where $G_Z = 1 - F_Z$.

Next, let $\theta = \theta_{\log} \circ \tlog \in\mathcal{T}_{\log}$ be a deep stopping policy with a neural network $\theta_{\log}$ of the form \eqref{eq:networks}.
In this case, the definition of the loss-function is in principle the same as above, however, the log-signature can be calculated directly in the hall basis, yielding the sequence of vectors
$$(\tilde{\L}_j^{(m)})_{0 \le j \le J, \; 1 \le m \le M}, \qquad \text{with }\tilde{\L}_j^{(m)} \in \mathfrak{g}^{N}(\R^{d+1}) \cong \R^{\eta_{d+1, N}}.$$
Again using \eqref{eq:discretized_regular_value} we define the following loss-function for the deep stopping policy $\theta$ by
\begin{align*}
\ell\left( \tilde{Y}\vert_{\mathcal{M}}, \tilde{\L}\vert_{\mathcal{M}}; \theta_{\log} \right) = - \frac{1}{|\mathcal{M}|}\sum_{m \in \mathcal{M}}\left\{\tilde{Y}_0^{(m)} + \sum_{j=0}^{J-1} G_Z\left(\sum_{i=0}^{j}\theta_{\log}\left(\tilde{\L}^{(m)}_{i}\right)^{2}\right)(\tilde{Y}_{j+1}^{(m)}- \tilde{Y}_{j}^{(m)}) \right\}.
\end{align*}
Regarding the distribution of $Z$, we have tried the exponential distribution and the log-logistic distribution.
The latter, corresponding to $G_Z(x) = 1/(1+x)$, led to initially quicker learning rates, however for the overall performance it was better to use the exponential distribution.

\subsection{Calculation of lower-bounds}\label{sec:lower-bounds}
Assume that we have obtained a stopping policy $\theta^{\ast} \in \mathcal{T}$ as the output of a numerical optimization.
For example $\theta^{\ast} = \langle l^{\ast}, \cdot \rangle \in\mathcal{T}_{\mathrm{sig}}$ for some numerically optimized $l^{\ast} \in T(V^{\ast})$; or $\theta^{\ast}\in \mathcal{T}_{\log}$ with some trained neural network $\theta_{\log}^{*}$.
We then calculate a estimate to the  value of the optimal stopping problem by a Monte-Carlo approximation of \eqref{eq:discretized_regular_value} with a new set of sample trajectories.
Note that this estimates is precisely given by the respective loss-function of Section \ref{sec:non-linear-loss} evaluated at the new set of samples.
The estimate is low biased due to the sub-optimality of the stopping policy.

Alternatively, note that the stopping policy $\theta^{\ast}$ defines a stopping time  $\sigma^{\ast} \coloneqq \sigma^{r}_{\theta^{\ast}}\in\mathcal{S}_J$ by \eqref{eq:discrete_stopping_time}.
Therefore, after generating independent samples of $Z$, one for each sample trajectory of $X$ and $Y$, we obtain another low-biased estimator of the optimal stopping value by the Monte-Carlo approximation of $\E[\tilde{Y}_{\sigma^{*}}]$.
Since both estimates yield the same result, upto a the additional Monte-Carlo error introduced by the sampling of $Z$, the latter value can serve for a sanity check.
\end{document}